\theoremstyle{change}
\newcommand{\Hl}{{\mathbb H}}
\newcommand{\C}{{\mathbb C}}
\newcommand{\R}{{\mathbb R}}
\newcommand{\Q}{{\mathbb Q}}
\newcommand{\Z}{{\mathbb Z}}
\newcommand{\g}{{\mathfrak g}}
\newcommand{\p}{{\mathfrak p}}
\newcommand{\n}{{\mathfrak n}}
\newcommand{\la}{{\mathfrak a}}
\newcommand{\mk}{{\mathfrak k}}
\newcommand{\A}{{\mathbb A}}
\newcommand{\sym}{\operatorname{sym}}
\newcommand{\cD}{{\cal D}}
\newcommand{\cF}{{\cal F}}
\newcommand{\cG}{{\cal G}}
\newcommand{\cH}{{\cal H}}
\newcommand{\cP}{{\cal P}}
\newcommand{\cL}{{\cal L}}
\newcommand{\cM}{{\cal M}}
\newcommand{\cN}{{\cal N}}
\newcommand{\cW}{{\cal W}}
\newcommand{\diag}{\operatorname{diag}}
\newcommand{\SL}{{\rm SL}}
\newcommand{\Hom}{{\rm Hom}}
\newcommand{\qed}{\hspace*{\fill}\rule{1ex}{1ex}}
\numberwithin{equation}{section}
\theoremstyle{plain}
 \newtheorem{thm}{Theorem}[section]
 \newtheorem{prop}[thm]{Proposition}
 \newtheorem{lem}[thm]{Lemma}
 \newtheorem{cor}[thm]{Corollary}
 \newtheorem{defn}[thm]{Definition}
 \newtheorem{rem}[thm]{Remark}
 \newenvironment{proof}{\vspace{1ex}\noindent{\it Proof.}\hspace{0.1em}}
	{\hfill\qed\vspace{2ex}}
\begin{document}
\title{An explicit construction of non-tempered cusp forms on $O(1,8n+1)$}
\author{Yingkun Li, Hiro-aki Narita and Ameya Pitale}
\date{}
\maketitle
\begin{abstract}
 We explicitly construct cusp forms on the orthogonal group of signature $(1,8n+1)$ for an arbitrary natural number $n$ as liftings from Maass cusp forms of level one. In our previous works \cite{P} and \cite{Mu-N-P} the fundamental tool to show the automorphy of the lifting was the converse theorem by Maass. In this paper, we use the Fourier expansion of the theta lifts by Borcherds \cite{B} instead. 

 We also study cuspidal representations generated by such cusp forms and show that they are irreducible and that all of their non-archimedean local components are non-tempered while the archimedean component is tempered, if the Maass cusp forms are Hecke eigenforms. The standard $L$-functions of the cusp forms are proved to be products of symmetric square $L$-functions of the Hecke-eigen Maass cusp forms with shifted Riemann zeta functions.
\end{abstract}
\section{Introduction}
A unique feature of automorphic forms or representations of reductive groups of higher rank~(or of larger matrix size) is the existence of non-tempered cusp forms or cuspidal representations, namely cuspidal representations which have a non-tempered local component. 
Due to such existence the Ramanujan conjecture for $GL(2)$ can not be generalized to a general reductive group in a straightforward manner. In fact, according to the generalized Ramanujan conjecture for quasi-split reductive groups, such generalization would be possible if we impose the ``genericity'' on cuspidal representations, namely they are assumed to admit Whittaker models. 
It seems that the existence of non-tempered cuspidal representations has often been an obstruction to establish a general result of automorphic representations. Hence it is of fundamental importance to study non-tempered cusp forms or cuspidal representations in detail. 

A well-known expected method for the construction of non-tempered cusp forms or representations is a lifting from a smaller group, e.g.\ a lifting from $GL(2)$. As related works in the case of holomorphic automorphic forms, we cite Kurokawa \cite{Ku}, Oda \cite{Od}, Rallis-Schiffman \cite{Ra-Sch}, Sugano \cite{Su}, Ikeda \cite{Ik1}, \cite{Ik2}, Yamana \cite{Ya} and Kim-Yamauchi \cite{Ki-Yam} et al. We are interested in such liftings {for} the case of non-holomorphic real analytic automorphic forms. We already have \cite{P} and \cite{Mu-N-P}  for the groups of low rank or of small matrix size. 
In this paper, for a general $n$, we provide an explicit lifting construction of non-tempered cusp forms or cuspidal representations on the orthogonal group $O(1,8n+1)$ over {$\Q$} with non-holomorphic real analytic automorphic forms viewed as Maass cusp forms on real hyperbolic spaces of dimension $8n+1$. 

Let $(\Z^{8n},S)$ be an even unimodular lattice defined by a positive definite matrix $S$ of degree $8n$. We realize the orthogonal group $O(1,8n+1)$ by the non-degenerate symmetric matrix $Q=
\begin{pmatrix}
 & & 1\\
& -S & \\
1 & &
\end{pmatrix}$. Let $f$ be a Maass cusp form of level one with Fourier coefficients $\{ c(n): n \in \Z\}$. We introduce a function $F_f$ on the real Lie group $O(1,8n+1)(\R)$ or the real hyperbolic space of dimension $8n+1$ by a Fourier expansion whose Fourier coefficients are explicitly written in terms of those of $f$ as follows : for $\lambda \in \Z^{8n}$ set
\begin{equation}
  \label{Fouriercoeff-lifting}
  A(\lambda):=|\lambda|_S\sum_{d|d_{\lambda}}c\left(-\frac{|\lambda|_S^2}{d^2}\right)d^{4n - 2},
\end{equation}
where $d_{\lambda}$ denotes the greatest common divisor of the non-zero entries of $\lambda$ and $|\lambda|_S = \sqrt{q_S(\lambda)}$ with $q_S$ the quadratic form associated to $S$. Let $\Gamma_S$ be an arithmetic subgroup of $O(1,8n+1)(\R)$ defined by the maximal lattice $(\Z^{8n+2},Q)$.
\begin{thm}[Theorem \ref{Construction-lifting}]\label{Intro-Thm1}
The function $F_f$ is a cusp form on $O(1,8n+1)(\R)$ with respect to $\Gamma_S$. If $f$ is non-zero, so is $F_f$.
\end{thm}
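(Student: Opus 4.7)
The plan is to realize $F_f$ as a theta integral and transfer $\Gamma_S$-invariance from the associated theta kernel. Concretely, I would introduce a Siegel theta kernel $\theta(\tau,g)$ on $\mathbb H \times O(1,8n+1)(\R)$ attached to the lattice $(\Z^{8n+2},Q)$ via the Weil representation, with Schwartz function built from the majorant determined by $g$. The candidate lift is then a (possibly regularized) integral of the shape
\[
\Phi_f(g)=\int_{\SL_2(\Z)\backslash\mathbb H} f(\tau)\,\overline{\theta(\tau,g)}\,\frac{dx\,dy}{y^2}.
\]
Automorphy of $\Phi_f$ under $\Gamma_S$ is immediate from the transformation law of $\theta$ in the orthogonal variable. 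The decisive step is to unfold against the Fourier expansion of $f$ and read off the Fourier expansion of $\Phi_f$ along the unipotent radical of the parabolic stabilizing an isotropic vector in $(\Z^{8n+2},Q)$. The explicit Fourier-coefficient formula of Borcherds \cite{B} for theta integrals of this type should produce, after unfolding, precisely a divisor sum $\sum_{d\mid d_\lambda} c(-|\lambda|_S^2/d^2)\,d^{4n-2}$, matching formula \eqref{Fouriercoeff-lifting} up to the leading factor $|\lambda|_S$ that arises from the Bessel-type transform dictated by the Weil representation.

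Granted the identification $F_f = \Phi_f$, the remaining structural properties follow quickly. Convergence of the defining Fourier series is controlled by any polynomial bound on $c(n)$ together with the bounded divisor sum, and $F_f$ inherits the hyperbolic Laplacian eigenvalue property directly from the eigenvalue of $\theta$ in $g$ paired with the spectral parameter of $f$. Cuspidality is checked at each $\Gamma_S$-conjugacy class of cusp: at the distinguished cusp, the constant term is the $\lambda=0$ Fourier coefficient, which vanishes because $A(0)=0$ (the factor $|\lambda|_S$ kills it); at other cusps, parametrized by isotropic lines in $(\Z^{8n+2},Q)$, one applies the same unfolding with respect to a conjugate parabolic, and the constant term reduces to $c(0)$, which vanishes because $f$ is itself cuspidal.

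Non-vanishing when $f\ne 0$ is then nearly immediate: pick any $m>0$ with $c(-m)\ne 0$ (such $m$ exists since $f$ is a nonzero Maass cusp form) and a primitive $\lambda\in\Z^{8n}$ with $q_S(\lambda)=m$. Since $S$ is positive definite even unimodular of rank $8n\ge 8$, such a primitive $\lambda$ exists for all sufficiently large $m$; for such $\lambda$ one has $d_\lambda=1$, so $A(\lambda)=|\lambda|_S\,c(-m)\ne 0$. The principal obstacle is the Fourier-expansion identification $F_f=\Phi_f$: carrying out the unfolding of the theta integral against $f$ and extracting precisely the divisor sum with exponent $4n-2$ is the analytic heart of the argument, and is what compels the authors to replace the Maass converse theorem used in \cite{P} and \cite{Mu-N-P} by Borcherds' explicit formulas \cite{B}.
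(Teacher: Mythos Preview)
Your overall strategy matches the paper's: realize $F_f$ as a Borcherds theta lift, read off the Fourier expansion from \cite[Theorem 7.1]{B} to identify it with the explicitly defined series, and deduce $\Gamma_S$-invariance from the orthogonal invariance of the theta kernel. The Casimir eigenvalue and cuspidality are then handled much as you outline (the paper does the latter via the adelic reformulation, but the content is the same: the constant term vanishes because the sum is over $\lambda\neq 0$, and the other cusps are reached by conjugation).

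There is, however, a genuine gap in your non-vanishing argument. You write ``pick any $m>0$ with $c(-m)\ne 0$ (such $m$ exists since $f$ is a nonzero Maass cusp form)'', but this is precisely the nontrivial point. A nonzero $f\in S(\SL_2(\Z);-(1+r^2)/4)$ certainly has some nonzero Fourier coefficient, but a priori it could happen that $c(n)=0$ for all $n<0$ while $c(n)\ne 0$ for some $n>0$. The paper addresses this in Lemma~\ref{Maass-form-non-van-coeff}: one splits $f$ into its even and odd parts $f_1,f_2$; if all negative coefficients vanished then $f_1$ and $f_2$ would share the same positive Fourier coefficients, hence $L(s,f_1)=L(s,f_2)$, yet the two $L$-functions satisfy functional equations with gamma factors shifted by $1$, forcing both to vanish and hence $f=0$. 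Without this argument your non-vanishing step is unsupported.

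A secondary issue: even granting the existence of such an $m$, your appeal to ``primitive $\lambda$ with $q_S(\lambda)=m$ for all sufficiently large $m$'' does not close the loop, since you have no control over whether the available $m$'s are large. The paper instead takes the \emph{smallest} $M_0$ with $c(-M_0)\ne 0$, works at the cusp corresponding to the decomposition $L_0\cong J\oplus E_8^n$, and uses that the $E_8$ theta series equals the weight-$4$ Eisenstein series, so \emph{every} positive integer is represented. Minimality of $M_0$ then kills all terms $d>1$ in the divisor sum (since $M_0/d^2<M_0$ forces $c(-M_0/d^2)=0$), so primitivity of $\lambda_0$ is not needed and $A(\lambda_0)=|\lambda_0|_S\,c(-M_0)\ne 0$.
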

In our previous works (\cite{P} and \cite{Mu-N-P}), we used the converse theorem due to Maass (\cite{Ma}) to obtain automorphy of the lift. A basic limitation of the Maass converse theorem is that it provides automorphy only with respect to a discrete subgroup generated by translations and one inversion. For the case of $n>1$, it seems difficult to determine the generators of $\Gamma_S$. Hence, the Maass converse theorem method, though applicable, does not give automorphy with respect to all of $\Gamma_S$.

To avoid this difficulty we apply a theta lift {from} the Maass form $f$, which yields an automorphic form $\Phi(\nu, f)$ {(for the notation see~\ref{Theta-lift})} on $O(1, 8n + 1)(\R)$ with respect to $\Gamma_S$. We get the automorphy of $F_f$ by explicitly computing the Fourier coefficients of $\Phi(\nu, f)$ using the calculation of Borcherds \cite[Theorem 7.1]{B}, and showing that they are exactly the same as $A(\lambda)$ defined above, i.e., $F_f$ is equal to the theta lift.

We next show that $F_f$ is a non-tempered cusp form, or $F_f$ generates a cuspidal representation of $O(1,8n+1)(\A)$ which has a non-tempered local component. For this, it is useful to obtain an adelic reformulation of the lift $F_f$ as a function on $O(1,8n+1)(\A)$. Our result is stated as follows:
\begin{thm}[Theorem~\ref{MainThm-second}]\label{Intro-Thm2}
Let $\pi_{F_f}$ be the cuspidal representation generated by $F_f$ and suppose that $f$ is a Hecke eigenform with the Hecke eigenvalue $\lambda_p$ for each finite prime $p$.\\
(1)~The representation $\pi_{F_f}$ is irreducible and thus has the decomposition into the restricted tensor product $\otimes'_{v\le\infty}\pi_v$ of irreducible admissible representations $\pi_v$ of $O(1,8n+1)(\Q_v)$.\\
(2)~For $v=p<\infty$, $\pi_p$ is the spherical constituent of the unramified principal series representation of $O(1,8n+1)(\Q_p)$ with the Satake parameter
\[
\diag \left( \left(\frac{\lambda_p+\sqrt{\lambda_p^2-4}}{2}\right)^2,p^{4n-1},\cdots,p,1,1,p^{-1},\cdots,p^{-(4n-1)},\left(\frac{\lambda_p+\sqrt{\lambda_p^2-4}}{2}\right)^{-2} \right).
\]
(3)~For every finite prime $p<\infty$, $\pi_p$ is non-tempered while $\pi_{\infty}$ is tempered.
\end{thm}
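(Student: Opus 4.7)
The plan is to pin down all local components of $\pi_{F_f}$ by computing the local Hecke action on $F_f$ directly from the Fourier coefficient formula \eqref{Fouriercoeff-lifting}, and then to read off the Satake parameters, the irreducibility, and the tempered/non-tempered dichotomy. I would first reformulate $F_f$ as an adelic form $\phi_{F_f}$ on $\OO(1,8n+1)(\A)$ right-invariant under the compact open subgroup $K_f=\prod_{p<\infty}K_p$ stabilizing $\widehat{\Z}^{8n+2}$, so that every finite-place component $\pi_p$ carries a non-zero spherical vector and is therefore a subquotient of an unramified principal series.

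For part (2), at a prime $p$ at which $Q$ becomes a sum of hyperbolic planes over $\Q_p$, I would choose standard generators of the spherical Hecke algebra of $\OO(1,8n+1)(\Q_p)$, decompose the corresponding double cosets into left cosets, and compute their action on the Fourier coefficients $A(\lambda)$. Feeding the Maass Hecke recursion $c(np)+p\,c(n/p)=\lambda_p c(n)$ into \eqref{Fouriercoeff-lifting}, the Hecke eigenvalues on $\phi_{F_f}$ collapse into polynomial expressions in $\lambda_p$ which, via the Satake isomorphism, are exactly the eigenvalues of the spherical function for the unramified principal series with parameter
\[
\diag\bigl(\beta_p^{2},\,p^{4n-1},\ldots,p,1,1,p^{-1},\ldots,p^{-(4n-1)},\,\beta_p^{-2}\bigr),\qquad \beta_p+\beta_p^{-1}=\lambda_p.
\]
Uniqueness of the spherical constituent then identifies $\pi_p$ as in the statement.

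For part (1), $\phi_{F_f}$ is a Hecke eigenform at every finite place, an eigenfunction for the center of $U(\g_\C)$ at infinity (via the Laplace eigenvalue of $f$), and $K_\infty$-fixed; combining these with the uniqueness of the spherical vector at each finite place and of the $K_\infty$-fixed vector in the archimedean principal series pinpoints the local components as irreducible representations, and Flath's theorem yields $\pi_{F_f}=\otimes'_v \pi_v$. For part (3), the Satake parameter in (2) contains entries $p^{k}$ with $k\ne 0$ which are non-unitary, so $\pi_p$ is non-tempered at every finite $p$. Temperedness of $\pi_\infty$ follows because level-one Maass cusp forms satisfy the Selberg--Roelcke bound $\lambda\ge 1/4$ on the Laplace eigenvalue, hence have purely imaginary spectral parameter, and the archimedean component of the theta lift then turns out to be a unitary principal series of $\OO(1,8n+1)(\R)$ induced from a unitary character of the minimal parabolic.

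The main obstacle is the concrete Hecke computation in the second paragraph: one needs a workable set of left coset representatives for the spherical Hecke double cosets on $\OO(1,8n+1)(\Q_p)$, and must then collapse the resulting sums of Fourier coefficients using the multiplicativity and recursion of the $c(n)$'s to produce exactly the polynomial in $\lambda_p$ prescribed by the Satake parameter. A useful structural observation is that the outer pair $\beta_p^{\pm 2}$ should encode the Hecke data of $f$ doubled in the way typical of Ikeda-type liftings, while the middle block $p^{4n-1},\ldots,p^{-(4n-1)}$ is a fixed, representation-independent contribution reflecting the trivial character on the orthogonal complement of the hyperbolic plane in $Q$.
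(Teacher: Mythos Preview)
Your outline matches the paper's strategy: adelize $F_f$, compute Hecke eigenvalues from the explicit Fourier coefficients, identify the Satake parameter, and handle irreducibility and temperedness separately. The one substantive point you are missing is how the paper organizes the Hecke computation you correctly flag as the main obstacle.

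Rather than computing the action of every generator of the spherical Hecke algebra on $A(\lambda)$ by brute-force coset decompositions, the paper observes (Lemma~\ref{local-sp-Bessel}) that $A_\lambda$, viewed as a function on $G_p$, lies in Sugano's space $\cW_\lambda^{\cM}$ of local Whittaker (special Bessel) functions satisfying the \emph{local Maass relation}~\eqref{local-Maass-rel}. The payoff is Proposition~\ref{Simple-Heckemodule}: on $\cW_\lambda^{\cM}$ every Hecke operator $C_{4n+1}^{(r)}$ is a fixed affine function of $C_{4n+1}^{(1)}$, so the entire Hecke module structure is determined by the single eigenvalue $\mu_1$. The paper then computes only $\mu_1$ (Proposition~\ref{Hecke-eigen-1}, via the squared Hecke recursion in Lemma~\ref{Main-lemma-prop}) and matches it against the spherical function of the candidate Satake parameter. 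Your direct approach is not wrong in principle, but the number and complexity of the required coset computations grow with $n$; Sugano's reduction makes the argument uniform in $n$ and is the real content behind the ``fixed middle block $p^{4n-1},\ldots,p^{-(4n-1)}$'' you noticed.

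One small logical slip in part~(1): Flath's theorem presupposes irreducibility rather than proving it. The paper obtains irreducibility from \cite[Theorem~3.1]{N-P-S}: a cusp form that is Hecke-eigen at every finite place generates an irreducible automorphic representation once the archimedean representation it generates is shown to be irreducible, and the latter follows from multiplicity one of the trivial $K_\infty$-type in spherical principal series (Proposition~\ref{Irreducibility-cusprep}). Your treatment of part~(3) agrees with the paper's (Proposition~\ref{archimedean-rep} for $\pi_\infty$, and the visible non-unitary Satake entries for $\pi_p$).
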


For the first assertion of this theorem we remark that, if $f$ is a Hecke eigenform, so is $F_f$~(cf.~Theorem \ref{MainThm-first}). By \cite[Theorem 3.1]{N-P-S} we then see that $F_f$ generates an irreducible cuspidal representation of $O(1,8n+1)(\A)$ after showing the irreducibility of the archimedean representation which $F_f$ generates. As a consequence of this theorem we have the result on the standard L-function of $\pi_{F_f}$ or $F_f$.
\begin{cor}[Corollary~\ref{Std-L-fct}]
For any prime $p$ the local $p$-factor $L_p(\pi_{F_f},\operatorname{St},s)$ of the standard $L$-function for $\pi_{F_f}$~(or $F_f$) is written as
\begin{align*}
L_p(\pi_{F_f},\operatorname{St},s)&=\zeta_p(s)(1-(\lambda_p^2-2)p^{-s}+p^{-2s})^{-1}\prod_{j=0}^{8n-2}\zeta_p(s+j-(4n-1))\\
&=L_p(\sym^2(f),s)\prod_{j=0}^{8n-2}\zeta_p(s+j-(4n-1)),
\end{align*}
where $\zeta_p$ denotes the $p$-factor of the Riemann zeta function and $L_p(\sym^2(f),s)$ is the $p$-factor of the symmetric square $L$-function for $f$.
\end{cor}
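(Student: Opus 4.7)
The plan is to read the proof directly off the shape of the Satake parameter produced by Theorem~\ref{Intro-Thm2}: once that parameter is in hand, the corollary reduces to a mechanical identification of factors in a characteristic polynomial, so the only real work is bookkeeping.

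First I would set $\beta_p:=\tfrac{1}{2}(\lambda_p+\sqrt{\lambda_p^2-4})$, so that $\beta_p+\beta_p^{-1}=\lambda_p$ and hence $\beta_p^{2}+\beta_p^{-2}=\lambda_p^{2}-2$. By the defining recipe for the standard local $L$-function, $L_p(\pi_{F_f},\operatorname{St},s)$ equals $\prod_i(1-\alpha_i p^{-s})^{-1}$, where the $\alpha_i$ range over the $8n+2$ diagonal entries of the Satake parameter. I would split these entries into three blocks: the two extreme entries $\beta_p^{\pm 2}$; the two middle entries equal to $1$; and the remaining $8n-2$ entries $p^{\pm 1},\dots,p^{\pm(4n-1)}$. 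The outer block collapses by the symmetric-function identity to
\[
(1-\beta_p^{2}p^{-s})^{-1}(1-\beta_p^{-2}p^{-s})^{-1}=\bigl(1-(\lambda_p^{2}-2)p^{-s}+p^{-2s}\bigr)^{-1}.
\]

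Next I would repackage the remaining $8n$ entries $\{p^{k}:k=-(4n-1),\dots,-1,0,0,1,\dots,4n-1\}$, with $k=0$ doubled, as a $\zeta$-product. The reindexing $j=4n-1-k$ turns the $8n-1$ distinct values into the factor $\prod_{j=0}^{8n-2}\zeta_p(s+j-(4n-1))$, while the extra copy of $k=0$ supplies the single factor $\zeta_p(s)$. Multiplying this together with the outer contribution yields
\[
L_p(\pi_{F_f},\operatorname{St},s)=\zeta_p(s)\bigl(1-(\lambda_p^{2}-2)p^{-s}+p^{-2s}\bigr)^{-1}\prod_{j=0}^{8n-2}\zeta_p(s+j-(4n-1)),
\]
which is the first equality of the corollary.

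For the second equality I would recall that the Hecke parameters of the level-one Maass form $f$ are $\{\beta_p,\beta_p^{-1}\}$, so the symmetric-square representation of $\GL_2$ carries eigenvalues $\{\beta_p^{2},1,\beta_p^{-2}\}$, giving
\[
L_p(\sym^{2}(f),s)=(1-\beta_p^{2}p^{-s})^{-1}(1-p^{-s})^{-1}(1-\beta_p^{-2}p^{-s})^{-1}=\zeta_p(s)\bigl(1-(\lambda_p^{2}-2)p^{-s}+p^{-2s}\bigr)^{-1}.
\]
Substituting this identity back into the first expression produces the second. There is really no obstacle; the only care required is tracking the shifted indices and recognising that the double occurrence of $1$ on the diagonal is precisely what absorbs the extra $\zeta_p(s)$ into the symmetric-square factor.
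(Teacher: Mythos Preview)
Your proof is correct and follows exactly the approach the paper indicates: the paper simply remarks that the corollary is ``a result of this theorem'' (namely Theorem~\ref{MainThm-second}, the determination of the Satake parameter), and you have spelled out the straightforward bookkeeping that extracts the local $L$-factor from the $8n+2$ diagonal entries and identifies the $\zeta_p(s)\bigl(1-(\lambda_p^{2}-2)p^{-s}+p^{-2s}\bigr)^{-1}$ block with $L_p(\sym^2(f),s)$.
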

What is crucial to obtain the results above is Sugano's non-archimedean local theory of the ``Whittaker functions'' on orthogonal groups~(cf.~\cite[Section 7]{Su}), which is now known as   ``special Bessel models''~(cf.~\cite{No-Ps},~\cite{Fu-Mo}). We verify that Sugano's local theory is applicable to the Fourier coefficients of the adelic cusp form for $F_f$. 
We furthermore verify that they also satisfy the local Maass relation~(cf.~(\ref{local-Maass-rel}),~(\ref{local-Maass-rel-2})), which leads to a nice reduction of the calculation of the Hecke eigenvalues of $F_f$. Indeed, we see in a general setting that the space of the non-archimedean local Whittaker functions satisfying the local Maass relation admits a simple Hecke module structure~(cf.~Proposition \ref{Simple-Heckemodule}). As an application of this we obtain a simple explicit formula for the Hecke eigenvalues of $F_f$~(cf.~Theorem \ref{MainThm-first} (2)), from which we deduce Theorem \ref{Intro-Thm2} (2) (or Theorem \ref{MainThm-second}, {2}).

We discuss our results in terms of further problems. Sugano's local theory just mentioned is useful even if we replace an even umimodular lattice by a general maximal lattice. One remaining problem is to find an appropriate definition of the Fourier coefficients in such a general setting. 
The results could be generalized further if we discover a general  definition of the Fourier coefficients which matches the general local theory by Sugano. 
The method by the result \cite[Theorem 7.1]{B} of Borcherds would  be a strong tool to find such an appropriate definition. In view of the close connection of our lift with the theta lift, we may remark that our results would be understood totally in a representation theoretic manner, e.g.\ some detailed study on the Weil representation~(Howe correspondence or the theta correspondence). In fact, as we have remarked, the cuspidal representation that $F_f$ generates admits a special Bessel model at non-archimedean places and this fact is compatible with the characterization of the theta lifts by Piatetskii-Shapiro and Soudry \cite{Ps-S} in terms of the special Bessel model. 
In addition, we recall that the lifts in \cite{Mu-N-P} correspond to some residual automorphic forms on $GL(4)$ via the Jacquet-Langlands correspondence. Then a natural question is ``what are the automorphic forms or reprsentations~(maybe non-cuspidal) on the split orthogonal group corresponding to our lifts via the Jacquet-Langlands correspondence?''. 
Now it seems that this problem has been becoming more accessible than before due to the recent advancement of the Arthur trace formula \cite{A}~({see also \cite{At-G}}).
Finally we however remark that we provide the explicit construction of $F_f$ so that it is accessible to those without knowledge of the representation theory, e.g. those studying automorphic forms in the classical setting. In fact, though there is a well-known approach to show the non-vanishing of theta lifts by Rallis inner product formula~(cf.~\cite{Ra},~\cite{Ga-Q-Ta}), the non-vanishing of our lift $F_f$ is proved by the representability of every positive integer in the even unimodular lattice $E_8$ and some elementary argument of the $L$-function of $f$, the latter of which is also used in \cite[Section 4]{Mu-N-P}.  
We hope that such accessibility leads to a broader development of studies on automorphic forms. 

Let us explain the outline of the paper. In Section \ref{Notation} we introduce basic notations of algebraic groups and Lie algebras, and automorphic forms necessary for later argument. 
In Section \ref{Explicit-lift} we introduce an automorphic form $F_f$ by a lifting from a Maass cusp form $f$ of level one. We first define it as an automorphic form on $O(1,8n+1)(\R)$. 
We prove that $F_f$~(on $O(1,8n+1)(\R)$) is a theta lift from $f$, which implies the left $\Gamma_S$-invariance of $F_f$. We next adelize $F_f$. We then verify that $F_f$ is a cusp form and show the non-vanishing of $F_f$. 
In Section \ref{Hecke-theory} we develop the Hecke theory for $F_f$ and derive the simple expression for Hecke eigenvalues of $F_f$, and in Section \ref{Cusp-rep} we study the cuspidal representation $\pi_{F_f}$ generated by $F_f$ in detail. 
We determine all of its local component explicitly. That enables us to discuss its non-temperedness at finite places and have the explicit formula for the standard $L$-function of $F_f$ or $\pi_{F_f}$. As we have remarked, we need Sugano's local theory \cite[Section 7]{Su} to study the cuspidal representations and the standard $L$-functions for our lifts. 
In the appendix we have similar results on cuspidal representations and the standard $L$-functions for the lifting by Oda \cite{Od} and Rallis-Schiffman \cite{Ra-Sch}, to which Sugano's local theory was originally applied. As is expected, such cuspidal representations are proved to be non-tempered at finite places.
\section{Basic notations}\label{Notation}
\subsection{Algebraic groups}\label{gps-hypsp}
For $N \in \mathbb{N}$, let $S\in M_N(\Q)$ be a positive definite symmetric matrix and put $Q:=
\begin{pmatrix}
& & 1\\
& -S & \\
1 & &
\end{pmatrix}$. 
We then define a $\Q$-algebraic group $\cG$ by the group
\[
\cG(\Q):=\{g\in M_{N+2}(\Q)\mid {}^tgQg=Q\}
\]
of $\Q$-rational points. We introduce another $\Q$-algebraic group $\cH$ by the group 
\[
\cH(\Q):=\{h\in M_N(\Q)\mid {}^thSh=S\}
\]
of $\Q$-rational points.
Let $q_S$, resp.\ $q_Q$, denote the quadratic form on $\Q^N$, resp.\ $\Q^{N+2}$, associated to $S$, resp.\ $Q$, i.e.
$$
q_S(v) = \frac{1}{2}{}^tv S v,~
q_Q(w) = \frac{1}{2}{}^tw Q w
$$
for $v \in \Q^N$ and $w \in \Q^{N+2}$.
Then $\cH$, resp.\ $\cG$, is the orthogonal group associated to this quadratic form.
For every place $v\leq\infty$ of $\Q$ we put $G_v:=\cG(\Q_v)$ and  $H_v:=\cH(\Q_v)$.

In addition, we introduce the standard proper $\Q$-parabolic subgroup $\cP$ with the Levi decomposition $\cP=\cN\cL$, 
where the $\Q$-subgroups $\cN$ and $\cL$ are defined by
\begin{align*}
\cN(\Q)&:=\left\{\left.n(x)=
\begin{pmatrix}
1 & {}^txS & \frac{1}{2}{}^txSx\\
  & 1_N & x\\
  &      & 1
\end{pmatrix}~\right|~x\in\Q^N\right\}, \\
\cL(\Q)&:=\left\{\left.
\begin{pmatrix}
\alpha & & \\
  & \delta & \\
  &          & \alpha^{-1}
\end{pmatrix}~\right|~\alpha\in\Q^{\times},~\delta\in\cH(\Q)\right\}.  
\end{align*}

Let $J = \Z^2$ be the hyperbolic plane, $L$ be a maximal lattice with respect to $S$, and put
\[
L_0:=\left\{\left.\begin{pmatrix}
x\\
y\\
z
\end{pmatrix}\in\Q^{N+2}\right|~x,z\in\Z,~y\in L\right\} = L \oplus J,
\]
which is a maximal lattice with respect to $Q$. Here see \cite[Chapter II,~Section 6.1]{Shi} for the definition of maximal lattices.
Through the bilinear form induced by the quadratic form $q_S$, the dual lattice $L^\sharp := \Hom_\Z(L, \Z)$ is identified with a sublattice of $\Q^N$ containing $L$, and is  maximal with respect to $S$ if and only if $L$ is even unimodular.
For each finite prime $p<\infty$ we introduce $L_{0,p}:=L_0\otimes_{\Z}\Z_p$ and put
\[
K_p:=\{g\in G_p\mid gL_{0,p}=L_{0,p}\},
\]
which forms a maximal open compact subgroup of $G_p$. 
On the other hand, let $R:=
\begin{pmatrix}
1 & & \\
& S & \\
& & 1
\end{pmatrix}$ and put
\[
K_{\infty}:=\{g\in G_{\infty}\mid {}^tgRg=R\}, 
\]
which is a maximal compact subgroup of $G_{\infty}$. 
Let $K_f:=\prod_{p<\infty}K_p$ and $K:=K_f\times K_{\infty}$. The groups $K_f$ and $K$ form maximal compact subgroups of $\cG(\A_f)$ and $\cG(\A)$ respectively.
We furthermore put $U:=U_f\times H_{\infty}$ with $U_f:=\prod_{p<\infty}U_p$, where
\[
U_p:=\{h\in\cH(\Q_p)\mid hL_p=L_p\}
\]
with $L_p:=L\otimes_{\Z}\Z_p$. We now set
\begin{equation}
  \label{eq:GammaS}
\Gamma_S:=\cG(\Q)\cap K_f G_{\infty} = \{\gamma\in\cG(\Q)\mid \gamma L_0=L_0\}. 
\end{equation}
and have the following result.
\begin{lem}\label{Classnum-Cusps}
(1)~(Strong approximation theorem for $\cG$)~The class number of $\cG=O(Q)$ with respect to $G_{\infty}K_f$ is one. Namely $\cG(\A)=\cG(\Q)G_{\infty}K_f$\\
(2)~The class number of $\cH=O(S)$ with respect to $U$ coincides  with the number of $\Gamma_S$-cusps.
\end{lem}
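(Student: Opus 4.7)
The plan for part (1) is to reformulate the strong approximation assertion $\cG(\A) = \cG(\Q) G_\infty K_f$ as a lattice-theoretic class number statement. Via the standard bijection between $\cG(\Q)\backslash\cG(\A)/G_\infty K_f$ and the classes in the genus of the maximal $\Z$-lattice $L_0$ (cf.\ Shimura \cite[Ch.~II]{Shi}), it suffices to show that the genus of $L_0$ consists of a single class. Since $L$ is positive definite even unimodular of rank $8n$, the lattice $L_0 = L \oplus J$ is even unimodular of rank $8n + 2 \geq 3$ and indefinite signature $(1, 8n+1)$; any such lattice has class number one by the classical theorem of Eichler, which in turn follows from Kneser's strong approximation for $\operatorname{Spin}$ combined with the triviality of the spinor norm obstruction for even unimodular indefinite forms.

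For part (2), the plan is to exhibit an explicit bijection
\[
\Phi: \{\Gamma_S\text{-cusps}\} \longleftrightarrow \{\text{classes in the genus of } L\},
\]
the right-hand side being identified with $|\cH(\Q)\backslash\cH(\A)/U|$ via the same adelic dictionary applied to $\cH$ (since $U_p$ is the stabilizer of $L_p$). By rank-one reduction theory, the $\Gamma_S$-cusps correspond bijectively to $\Gamma_S$-orbits on rational isotropic lines in $V = \Q^{N+2}$; using the $\Gamma_S$-stability of $L_0$, these are equivalently $\Gamma_S$-orbits on primitive isotropic vectors in $L_0$ modulo $\pm 1$.

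Given such a primitive isotropic vector $v \in L_0$, maximality of $L_0$ provides $v^* \in L_0$ with $q_Q(v^*) = 0$ and ${}^t v^* Q v = 1$; set $M_v := \langle v, v^*\rangle^\perp \cap L_0$. I will check that $M_v$ is a rank-$N$ maximal $\Z$-lattice in the anisotropic part $(\Q^N, q_S)$, hence lies in the genus of $L$, and that its isometry class is independent of the choice of $v^*$ (any two such choices differ by a translation by an element of $M_v$ plus a correction term lying in $\Z v$) and depends only on the $\Gamma_S$-orbit of $v$; this defines $\Phi$. Surjectivity of $\Phi$ is where part (1) is essentially invoked: given $L'$ in the genus of $L$, the lattice $L' \oplus J$ lies in the genus of $L_0$, so by (1) there exists a $\cG(\Q)$-isometry $L' \oplus J \cong L_0$, and transporting the standard isotropic vector of the $J$-summand produces a $v \in L_0$ with $M_v \cong L'$. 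Injectivity follows by extending a given isometry $M_{v_1} \cong M_{v_2}$ together with the prescriptions $v_1 \mapsto v_2$ and $v_1^* \mapsto v_2^*$ to an element of $\cG(\Q)$ which automatically preserves $L_0$, hence lies in $\Gamma_S$.

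The main obstacle will be the careful bookkeeping in part (2), especially verifying that $M_v$ belongs to the genus of $L$ at every prime (which reduces to a local calculation using maximality of $L_{0,p}$ and Witt's theorem over $\Q_p$), and arranging the extension in the injectivity step so that the resulting global isometry actually preserves $L_0$ and not just the hyperbolic summand. Both of these points ultimately depend on the class-number-one conclusion of (1) and on the maximality of $L_0$ imposed from the start.
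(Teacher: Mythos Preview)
Your argument for part~(1) is correct but narrower than the lemma as stated: in Section~\ref{gps-hypsp} the lattice $L$ is an arbitrary maximal lattice with respect to a general positive definite $S$, and the even unimodular hypothesis is only imposed from Section~\ref{Explicit-lift} onward. Your appeal to Eichler's theorem for indefinite even unimodular lattices thus proves only the special case (which is all the paper ultimately needs), whereas the paper cites Shimura's general class-number theorem \cite[Lemma 9.23(i), Theorem 9.26]{Shi}, valid for any maximal $L_0$ in an isotropic indefinite form over $\Q$. The underlying mechanism (strong approximation for $\operatorname{Spin}$ plus control of the spinor norm) is the same, so this is a scope issue rather than an error.

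For part~(2) you take a genuinely different route. The paper argues adelically: strong approximation gives
\[
\cP(\Q)\backslash\cG(\Q)/\Gamma_S \;\simeq\; \cP(\Q)\backslash\cG(\A)/G_\infty K_f,
\]
and then the Iwasawa decomposition $\cG(\A)=\cP(\A)K$ collapses the right-hand side to $\cH(\Q)\backslash\cH(\A)/U$, with no lattice splitting ever made explicit. Your approach instead realizes the bijection geometrically, by sending a primitive isotropic $v\in L_0$ to the definite sublattice $M_v=\langle v,v^*\rangle^{\perp}\cap L_0$ and checking genus membership via local Witt cancellation. Both are valid; the paper's proof is shorter and uniform in $L$, while yours makes concrete the content of Remark~\ref{Representability} (that cusps correspond to hyperbolic-plane splittings of $L_0$) and does not require passing through the adele group at all for the bijection itself---though you still invoke~(1) for surjectivity, exactly where the paper's argument uses it implicitly through the Iwasawa step. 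One small point to watch in your injectivity step: the extension of an isometry $M_{v_1}\cong M_{v_2}$ by $v_1\mapsto v_2$, $v_1^*\mapsto v_2^*$ is automatically in $\cG(\Q)$ and preserves $L_0$ because $L_0=\Z v_i\oplus\Z v_i^*\oplus M_{v_i}$ as lattices, so no further bookkeeping is needed there.
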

\begin{proof}
For (1) see \cite[Lemma 9.23 (i),~Theorem 9.26]{Shi}, for which note that the base field is  $\Q$ in our case. As for the second assertion we can verify that the number of the cusps coincides with the class number of the Levi subgroup $\cH$, following the proof of \cite[Lemma 2.3, 2]{Mu-N-P}. More specifically, in view of the strong approximation theorem, we have the bijection
\[
\cP(\Q)\backslash\cG(\Q)/\Gamma_S\ni\cP(\Q)\gamma\Gamma_S
\mapsto\cP(\Q)\gamma G_{\infty}K_f\in\cP(\Q)\backslash\cG(\A)/G_{\infty}K_f,
\]
where $\gamma\in\cG(\Q)$. 
This yields the second assertion. In fact, by virtue of the Iwasawa decomposition $\cG(\A)=\cP(\A)K$, we have a bijection  $\cP(\Q)\backslash\cG(\A)/K_fG_{\infty}\simeq\cH(\Q)\backslash\cH(\A)/U_fU_{\infty}$. 
\end{proof}
\begin{rem}\label{Representability}
  The class number of $\cH$ is also the number of equivalence classes of quadratic forms in the same genus as $S$. Also, there is only one element in the genus of $L$ when $S$ is unimodular. In that case, we see that a $\Gamma_S$-cusp corresponds to a decomposition of $L_0$ into the direct sum of a hyperbolic plane and a negative definite unimodular lattice. 
\end{rem}
For the subsequent argument we remark that the real Lie group $G_{\infty}$ admits an Iwasawa decomposition 
\[
G_{\infty}=N_{\infty}A_{\infty}K_{\infty},
\]
where
\begin{equation}\label{Iwasawa-decomp}
N_{\infty}:=\left\{n({ x})\mid { x}\in\R^N\right\}, \qquad 
A_{\infty}:=\left\{\left.
a_y:=
\begin{pmatrix}
y &  & \\
 & 1_N & \\
 & & y^{-1}
\end{pmatrix}~\right|~y\in\R^{\times}_+\right\}.
\end{equation}
From the Iwasawa decomposition we can identify the homogeneous space $G_{\infty}/K_{\infty}$ with the $(N+1)$-dimensional real hyperbolic space $H_N:=\{(x,y)\mid x\in\R^N,~y\in\R_{>0}\}$ by the natural map
\[
n(x)a_y\mapsto (x,y).
\]
The cusp forms we are going to study are regarded as cusp forms on the real hyperbolic space.
\subsection{Lie algebras}\label{Lie-alg}
The Lie algebra $\g$ of $G_{\infty}$ is defined as
\[
\g=\{X\in M_{N+2}(\R)\mid {}^tXQ+QX=0_{N+2,N+2}\},
\]
which coincides with
\[
\left\{\left.
\begin{pmatrix}
a & {}^tyS & 0\\
x & Y & y\\
0 & {}^txS & -a
\end{pmatrix}~\right|~
\begin{array}{c}
a\in\R,~x,~y\in\R^N\\
Y\in{\mathfrak o}(S)
\end{array}\right\},
\]
where ${\mathfrak o}(S)$ denotes the Lie algebra of $\cH(\R)$. 

Let $\theta$ be the Cartan involution of $\g$ defined by
\[
\g\ni X\rightarrow -R{}^tXR^{-1}\in\g.
\]
We put
\[
\mk:=\{X\in\g\mid \theta(X)=X\},\quad\p:=\{X\in\g\mid \theta(X)=-X\}.
\]
Then a Cartan decomposition $\g=\mk\oplus\p$ is obtained. 
Let $\la$ be a maximal abelian subalgebra given by
\[
\la:=\left\{\left.
\begin{pmatrix}
t & & \\
& 0_{N, N} & \\
& & -t
\end{pmatrix}\right|~t\in\R\right\}.
\]
The algebra $\g$ has an Iwasawa decomposition
\[
\g=\n\oplus\la\oplus\mk,
\]
where
\[
\n:=\left\{\left.
\begin{pmatrix}
0 & {}^t{ x}S & 0\\
0_N & 0_{N, N} & { x}\\
0 & {}^t0_N & 0
\end{pmatrix}~\right|~x\in\R^N\right\}.
\]

We next consider the root space decomposition of $\g$ with respect to $\la$. Let $H:=
\left(\begin{smallmatrix}
1 &  &  \\
 & 0_{N, N} & \\
 & &  -1
\end{smallmatrix}\right)$ and $\alpha$ be the linear form of $\la$ such that $\alpha(H)=1$. 
Then $\{\pm\alpha\}$ is the set of roots for $(\g,\la)$. 
Let $\{{ e}_i\mid 1\leq i\leq N\}$ be an orthonormal basis of the Euclidean space $\R^N$ with respect to $S$. For ${ e}_i$ with $1\le i\le N$ we put
\[
E_{\alpha}^{(i)}:=
\begin{pmatrix}
0 & {}^t{ e}_iS & 0\\
0_N & 0_{N, N} & { e}_i\\
0 & {}^t0_N & 0 
\end{pmatrix},\quad E_{-\alpha}^{(i)}:=
\begin{pmatrix}
0 & {}^t0_N & 0\\
{ e}_i & 0_{N, N} & 0_N\\
0 & {}^t{ e}_iS & 0 
\end{pmatrix}.
\]
The set $\{E_{\alpha}^{(i)}\mid 1\le i\le N\}$~
(respectively~$\{E_{-\alpha}^{(i)}\mid 1\le i\le N\}$) forms a basis of $\n$~(respectively~a basis of $\bar{\n}:=\left\{\left.
\left(\begin{smallmatrix}
0 & {}^t{ 0_N} & 0\\
S{ x} & 0_{N,N} & 0_N\\
0 & {}^t{ x} & 0
\end{smallmatrix} \right)\right|~x\in\R^N\right\}$). Let ${\mathfrak z}_{\la}(\mk):=\{X\in\mk\mid [X,A]=0~\forall A\in\la\}$, which coincides with
\[
\left\{\left.
\begin{pmatrix}
0 & {}^t0_N & 0\\
0_N & Y & 0_N\\
0 & {}^t0_N & 0
\end{pmatrix}~\right|~Y\in{\mathfrak {so}}(S)\right\}.
\]
Then ${\mathfrak z}_{\la}(\mk)\oplus\la$ is the eigen-space with the eigenvalue zero. We then see from the root space decomposition of $\g$ with respect to $\la$ that $\g$ decomposes into 
\[
\g=({\mathfrak z}_{\la}(\mk)\oplus\la)\oplus\n\oplus\bar{\n}.
\]

We now introduce the differential operator $\Omega$ defined by the infinitesimal action of 
\begin{equation}\label{Casimir-defn}
\Omega := \frac{1}{2N}H^2-\frac{1}{2}H+\frac{1}{N}\sum_{i=1}^{N}{E_{\alpha}^{(i)}}^2.
\end{equation} 
This differential operator $\Omega$ coincides with the infinitesimal action of the Casimir element of $\g$ (see \cite[p.293]{Kn}) on the space of right $K$-invariant smooth functions of $G_{\infty}$. To check this we note $[E_{\alpha}^{(i)},E_{-\alpha}^{(\bar{i})}]=H$ for $z\in\Hl^1$ and $E_{\alpha}^{(i)}-E_{-\alpha}^{(i)}\in\mk$. 
In what follows, we call $\Omega$ the Casimir operator.
\subsection{Automorphic forms}\label{Autom-form}
For $\lambda\in\C$ and a congruence subgroup $\Gamma\subset \SL_2(\R)$ we denote by $S(\Gamma,\lambda)$ the space of Maass cusp forms of weight $0$ on the complex upper half plane ${\mathfrak h}:=\{u+\sqrt{-1}v\in\C\mid v>0\}$ whose eigenvalue with respect to the hyperbolic Laplacian is $-\lambda$.

We continue with the same notations from the previous two sections.
 For $r\in\C$ we denote by $\cM(\Gamma_S,r)$ the space of smooth functions $F$ on $G_{\infty}$ satisfying the following conditions: 
\begin{enumerate}
\item $\Omega\cdot F=\displaystyle\frac{1}{2N}\left(r^2-\displaystyle\frac{N^2}{4}\right)F$, where $\Omega$ is the Casimir operator defined at \eqref{Casimir-defn},
\item for any $(\gamma,g,k)\in\Gamma_S\times G_{\infty}\times K_{\infty}$, we have $F(\gamma gk)=F(g)$,
\item $F$ is of moderate growth.
\end{enumerate}
{As usual} we can say $F \in \cM(\Gamma_S,r)$ is a cusp form if it vanishes at all the cusps of $\Gamma_S$. {We will explain this in Section \ref{Adelic-lift} with the help of the adelic language. Though we assume there that $L$ is even unimodular, the explanation works for any maximal lattice $L$}. 

Let $K_{\alpha}$, with $\alpha\in\C$, denote the modified Bessel function (see \cite[Section 4.12]{A-A-R}), which satisfies the differential equation
\[
y^2\frac{d^2 K_{\alpha}}{dy^2}+y\frac{dK_{\alpha}}{dy}-(y^2+{\alpha}^2)K_{\alpha}=0.
\]
With $K_{\alpha}$ we can describe the Fourier expansion of $F$ as follows:
\begin{prop}\label{four-exp-prop}
Let $L^{\sharp}$ be the dual lattice of $L$. An automorphic form $F\in\cM(\Gamma_S,r)$ admits a Fourier expansion
\[
F(n(x)a_y)=\sum_{\lambda\in L^{\sharp}}W_{\lambda}(a_y)\exp(2\pi\sqrt{-1}({}^t\lambda S{ x})).
\]
Here
\[
W_{\lambda}(a_y)=
\begin{cases}
  C_{\lambda}y^{N/2}K_r\left(4\pi y\sqrt{ q_S(\lambda)}\right)&(\lambda\not=0_N),\\
c_1y^{N/2-r}+c_2y^{N/2+r}&(\lambda=0_N,~r\not=0),\\
c_1y^{N/2}+c_2y^{N/2}\log(y)&(\lambda=0_N,~r=0),
\end{cases}
\]
where $C_{\lambda}$, $c_1$ and $c_2$ are constants.
\end{prop}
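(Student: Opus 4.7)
The plan is to decompose $F$ along the unipotent radical, derive an ODE for each Fourier coefficient from the Casimir eigenvalue equation, and invoke the moderate growth condition to pin down the solution. First, $\Gamma_S$ contains the translations $n(\ell)$ for every $\ell$ in a full rank sublattice of $L$ (in the even unimodular case the entire $L$; in general a sublattice of finite index, which is all that is needed for what follows), and $F$ is right $K_{\infty}$-invariant, so it descends to a smooth function of $(x,y)\in\R^N\times\R_{>0}$ periodic in the $x$ variable. Standard Fourier theory on the compact torus and the pairing ${}^t \lambda S x$ induced by $S$ then yields an expansion
\[
F(n(x)a_y)=\sum_{\lambda\in L^{\sharp}}W_{\lambda}(a_y)\exp\bigl(2\pi\sqrt{-1}\,{}^t\lambda S x\bigr)
\]
with smooth coefficients $W_{\lambda}(a_y)$, the summation range being precisely $L^{\sharp}$ by duality.

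Next I would compute the Casimir operator in Iwasawa coordinates $(x,y)$. Using $a_y\exp(tH)=a_{ye^t}$ one finds that $H$ acts as $y\partial_y$, and using $\exp(tE_{\alpha}^{(i)})=n(te_i)$ together with the commutation $a_y n(z)a_y^{-1}=n(yz)$ one finds $E_{\alpha}^{(i)}$ acts as $y\partial_{x_i}$. Since $E_{\alpha}^{(i)}$ kills the $y$-coordinate, iteration gives $(E_{\alpha}^{(i)})^2=y^2\partial_{x_i}^2$. Substituting $F=W_{\lambda}(a_y)\exp(2\pi\sqrt{-1}\,{}^t\lambda S x)$ into $\Omega\cdot F=\frac{1}{2N}(r^2-\frac{N^2}{4})F$, and using $\sum_i\lambda_i^2={}^t\lambda S\lambda=2q_S(\lambda)$ in the $S$-orthonormal basis $\{e_i\}$, the eigenvalue equation collapses, coefficient by coefficient, to
\[
y^2 W_{\lambda}'' + (1-N)\,y W_{\lambda}' - \bigl[16\pi^2 y^2 q_S(\lambda) + r^2 - \tfrac{N^2}{4}\bigr]W_{\lambda} = 0.
\]

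Finally I would solve this ODE and apply moderate growth. For $\lambda\neq 0_N$, the substitution $W_{\lambda}(a_y)=y^{N/2}\tilde W(y)$ followed by $u=4\pi y\sqrt{q_S(\lambda)}$ converts the equation to the modified Bessel equation $u^2\tilde W''+u\tilde W'-(u^2+r^2)\tilde W=0$, whose solution space is spanned by $K_r(u)$ and $I_r(u)$; the moderate growth of $F$ as $y\to\infty$ forces the $I_r$-component (which grows like $e^u/\sqrt{u}$) to vanish, giving the claimed formula. For $\lambda=0_N$ the ODE is an Euler equation with indicial equation $(s-N/2)^2=r^2$, yielding the linearly independent solutions $y^{N/2\pm r}$ when $r\neq 0$ and $y^{N/2}$, $y^{N/2}\log y$ when $r=0$. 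The main obstacle I anticipate is the Casimir computation itself: tracking the constants $\frac{1}{2N}$, $\frac12$, $\frac1N$ from the definition \eqref{Casimir-defn} so as to land on precisely the coefficient $(1-N)$ of $yW_{\lambda}'$ and the shift $r^2-N^2/4$ that makes the $\lambda=0$ indicial equation factor as $(s-N/2)^2=r^2$, and that makes the Bessel argument come out to be exactly $4\pi y\sqrt{q_S(\lambda)}$, is delicate, and a sign or factor error there would propagate through the entire argument.
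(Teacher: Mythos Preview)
Your proof is correct and follows essentially the same approach as the paper: derive an ODE for each Fourier coefficient from the Casimir eigenvalue equation, solve it, and use moderate growth to select the $K_r$ solution. The only cosmetic difference is that the paper makes the substitution $\hat W_\lambda(y)=y^{-(N-1)/2}W_\lambda(y)$, lands on the Whittaker equation with parameters $(0,r)$, and then converts $W_{0,r}$ to $K_r$ via the identity $W_{0,r}(2y)=\sqrt{2y/\pi}\,K_r(y)$, whereas your substitution $W_\lambda=y^{N/2}\tilde W$ goes directly to the modified Bessel equation; your route is slightly more economical and your explicit coordinate computation of $\Omega$ and explicit exclusion of $I_r$ fill in steps the paper leaves implicit.
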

\begin{proof}
The condition $\Omega\cdot F=\displaystyle\frac{1}{2N}(r^2-\frac{N^2}{4})F$ implies that $W_{\lambda}$ satisfies the same condition. We note that $W_{\lambda}$ is determined by its restriction to $A$. For simplicity of the notation we put $W_{\lambda}(y):=W_{\lambda}(a_y)$ and $\hat{W}_{\lambda}(y):=y^{-\frac{N-1}{2}}W_{\lambda}(y)$. 

We then verify that $\hat{W}_{\lambda}(y)$ satisfies the differential equation
\[
  \frac{\partial^2}{\partial y^2}\hat{W}_{\lambda}(y)-\left((8\pi^2)
    2q_S(\lambda)
    +\frac{r^2-\frac{1}{4}}{y^2}\right)\hat{W}_{\lambda}(y)=0.
\]
When $\lambda=0_N$ it is easy to show that we have
\[
W_{\lambda}(y)=
\begin{cases}
c_1y^{N/2+r}+c_2y^{N/2-r}&(\lambda=0_N,~r\not=0),\\
c_1y^{N/2}+c_2y^{N/2}\log(y)&(\lambda=0_N,~r=0).
\end{cases}
\] 
Now assume $\lambda\not=0_N$. 
Putting $Y:=8\pi y\sqrt{q_S(\lambda)}$, the differential equation above is rewritten as
\[
(\frac{\partial^2}{\partial Y^2}+(-\frac{1}{4}+\frac{\frac{1}{4}-r^2}{Y^2}))\hat{W}_{\lambda}\left(\frac{Y}{8\pi\sqrt{q_S(\lambda)}}\right)=0.
\]
This is precisely the differential equation for the Whittaker function (see \cite[Section 4.3]{A-A-R}). With the Whittaker function $W_{0,r}$ parametrized by $(0,r)$ we thereby have the moderate growth solution
\[
  \hat{W}_{\lambda}(y)=C_{\lambda}W_{0,r}\left(8\pi y \sqrt{q_S(\lambda)}\right).
\]
with constants $C_{\lambda}$ depending only on $\lambda$.
We now note the relation (see \cite[Section 13, 13.18 (iii)]{O-L-B-C})
\[
W_{0,r}(2y)=\sqrt{\frac{2y}{\pi}}K_{r}(y).
\]
This means that $F$ has the Fourier expansion as in the statement of the proposition.
\end{proof}
\section{An explicit lifting construction}\label{Explicit-lift}
Let us now assume that $L$ is an even unimodular lattice. 
We can identify $L$ with the quadratic module $(\Z^N, q_S)$, where $S$ is a positive definite symmetric matrix satisfying $S^{-1}\Z^N=\Z^N$ and $N = 8n$ for some $n \in \mathbb{N}$. 
Then the dual lattice $L^{\sharp}=S^{-1} \Z^N$ is the same as $L$.

Let ${\mathfrak h}$ denote the complex upper half plane and let $$f(\tau)=\sum_{n\not=0}c(n)W_{0,\frac{\sqrt{-1}r}{2}}(4\pi|n|v)\exp(2\pi\sqrt{-1}nu)\in S\left(SL_2(\Z);-\frac{r^2 + 1}{4}\right)$$ be a Maass cusp form on ${\mathfrak h}$, where note that the Selberg conjecture means that $r$ is a real number.
For $\lambda\in\R^n$ we put $|\lambda|_S:=\sqrt{q_S(\lambda)}$.
To consider the lifting for an even unimodular lattice $L$, we define the Fourier coefficient $A(\lambda)$ for a non-zero $\lambda\in L^{\sharp}=L$ as in \eqref{Fouriercoeff-lifting} by
$$
A(\lambda):=|\lambda|_S\sum_{d|d_{\lambda}}c\left(-\frac{|\lambda|_S^2}{d^2}\right)d^{4n-2},
$$
where $d_{\lambda}$ denotes the greatest common divisor of the non-zero entries of $\lambda$. The lifting from $f$ to $F_f$ on $G_{\infty}$ is defined by
\[
F_f(n(x)a_y)=\sum_{\lambda\in L^{\sharp}\setminus\{0\}}A(\lambda)y^{4n}K_{\sqrt{-1}r}(4\pi|\lambda|_Sy)\exp(2\pi\sqrt{-1}{}^t\lambda Sx).
\]
The aim of this section is to prove the following result:
\begin{thm}\label{Construction-lifting}
The automorphic form $F_f$ is a cusp form in $M(\Gamma_S,\sqrt{-1}r)$. 
Furthermore, if $f$ is non-zero, so is $F_f$.
\end{thm}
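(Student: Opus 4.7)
The plan is to realize $F_f$ as a theta lift $\Phi(\nu, f)$ in the sense of Borcherds and to read off all the claimed properties from that identification. The detour through the theta lift is essential because automorphy with respect to the full arithmetic group $\Gamma_S$ is built into the theta-lift construction but cannot easily be verified directly for the series defining $F_f$: when $n>1$ the generators of $\Gamma_S$ are not known, so the converse-theorem approach of \cite{P} and \cite{Mu-N-P} is unavailable. Concretely I would introduce the Borcherds theta lift attached to the even unimodular lattice $L=(\Z^{8n},q_S)$ sitting inside $L_0 = L\oplus J$ of signature $(1,8n+1)$, with archimedean parameter $\nu$ chosen so that the output is a smooth $K_{\infty}$-fixed function on $G_{\infty}$ of the desired Casimir type.

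The core step is a term-by-term comparison of the Fourier expansion of $\Phi(\nu, f)$ along $N_{\infty}$ with the defining series of $F_f$. Since $L$ is even unimodular, $L^{\sharp}=L$ and the Weil-representation component on $\C[L^{\sharp}/L]$ is trivial, so the general formula of \cite[Theorem 7.1]{B} collapses to a sum over nonzero $\lambda \in L$ whose $\lambda$-coefficient has the shape
\[
|\lambda|_S \sum_{d \mid d_{\lambda}} c\!\left(-\frac{|\lambda|_S^2}{d^2}\right) d^{N/2-2}\, y^{N/2} K_{\sqrt{-1}r}\!\left(4\pi|\lambda|_S y\right).
\]
With $N=8n$ this is exactly the $\lambda$-th term in the definition of $F_f$, so $F_f = \Phi(\nu, f)$. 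Once this identification is in hand, the left $\Gamma_S$-invariance of $F_f$ is inherited from the theta lift; the Casimir eigenvalue equation with eigenvalue $\tfrac{1}{2N}((\sqrt{-1}r)^2 - N^2/4)$ follows from Proposition \ref{four-exp-prop} because every nonzero Fourier coefficient has the prescribed Bessel form with parameter $\sqrt{-1}r$; moderate growth follows from the exponential decay of $K_{\sqrt{-1}r}$ at infinity together with the polynomial bound on the divisor sum; and cuspidality is a general feature of theta lifts of cusp forms, or equivalently can be verified by the vanishing of the constant term in the Fourier expansion at each cusp (the number of which is governed by Lemma \ref{Classnum-Cusps} and Remark \ref{Representability}).

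For non-vanishing, I would argue directly from the explicit formula for $A(\lambda)$. Assume $f \neq 0$, so some coefficient $c(-m_0)$ with $m_0 \in \mathbb{N}$ is nonzero. Using that the $E_8$ quadratic form represents every positive integer, together with an $E_8$-sublattice of the even unimodular lattice $L$, one exhibits a primitive $\lambda_0 \in L$ (so $d_{\lambda_0}=1$) with $|\lambda_0|_S^2 = m_0$; for such $\lambda_0$ the defining sum collapses to $A(\lambda_0) = \sqrt{m_0}\, c(-m_0) \neq 0$. If a fully primitive $\lambda_0$ cannot be arranged by elementary means, the $L$-function argument from \cite[Section 4]{Mu-N-P} applies: packaging the $A(\lambda)$ into a Dirichlet series produces $L(s,f)$ up to a shifted zeta factor, and nonvanishing of $L(s,f)$ in its region of absolute convergence forces some $A(\lambda) \neq 0$. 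The main obstacle I anticipate is the Borcherds comparison itself: one must keep track of the Weil-representation conventions, the normalization of the archimedean parameter $\nu$, and the Bessel-function factors in \cite[Theorem 7.1]{B} precisely enough that the $K$-Bessel index comes out to $\sqrt{-1}r$ and the divisor exponent to $4n-2 = N/2-2$ on the nose.
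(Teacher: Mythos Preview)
Your overall strategy---identify $F_f$ with the Borcherds theta lift via the Fourier-expansion computation of \cite[Theorem~7.1]{B}, inherit $\Gamma_S$-invariance from the theta kernel, and read off the Casimir eigenvalue and moderate growth from the Bessel shape of the Fourier terms---is exactly the paper's. The paper carries out cuspidality through an adelic reformulation (Section~\ref{Adelic-lift}) rather than invoking a general principle about theta lifts, but this is a matter of packaging.

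The non-vanishing argument, however, has two genuine gaps. First, the step ``$f\neq 0$, so some $c(-m_0)\neq 0$'' is not automatic: a nonzero Maass cusp form could a priori have all its negative-index Fourier coefficients vanish. The paper singles this out as Lemma~\ref{Maass-form-non-van-coeff} and proves it by writing $f=f_1+f_2$ as even plus odd and observing that $L(s,f_1)=L(s,f_2)$ would force an impossible identity of gamma factors. This \emph{is} the $L$-function argument of \cite[Section~4]{Mu-N-P}, but it enters here---to produce $m_0$---not as a fallback for constructing $\lambda_0$. Your proposed fallback, packaging the $A(\lambda)$ into a Dirichlet series, does not factor as $L(s,f)$ times a zeta: what one gets is a shifted zeta times $\sum_m r_L(m)c(-m)m^{(1-s)/2}$, a Rankin--Selberg convolution with the theta series of $L$, from which non-vanishing of some $A(\lambda)$ does not follow without already knowing some $c(-m)\neq 0$ with $m$ represented by $L$.

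Second, the assertion that $L$ contains an $E_8$-sublattice is false in general: for $n=3$ the Leech lattice has minimum $q_S$-value $2$ and no roots, hence no copy of $E_8$, and does not represent $1$. The paper's fix is not to stay inside $L$ but to change cusps: since $L_0$ is indefinite unimodular it is isomorphic to $J\oplus E_8^n$, so there is always a $\Gamma_S$-cusp at which the relevant positive-definite lattice is $E_8^n$ (cf.\ Remark~\ref{Representability}). Working at that cusp, $E_8^n$ represents the \emph{minimal} $M_0$ with $c(-M_0)\neq 0$, and minimality kills all $d>1$ terms in the divisor sum, so no primitivity of $\lambda_0$ is needed.
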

As mentioned in the introduction, we obtain the $\Gamma_S$-invariance of $F_f$ by recognizing it as a theta lift from modular forms on ${\mathfrak h}$. For this, we need to calculate the Fourier expansion of the theta lift explicitly using \cite[Theorem 7.1]{B} and check that it agrees with the Fourier expansion of $F_f$. Recognizing $F_f$ as a theta lift does not give us cuspidality or non-vanishing directly. The former follows from reinterpreting $F_f$ in the adelic setting, whereas the latter is a consequence of representability of integers by the $E_8$ lattice and properties of the $L$-function of $f$. 
\subsection{Real hyperbolic space as a Grassmanian manifold}\label{Hyp-Grassmanian}
For this section, we let $(\R^N, q_S)$ be any positive definite, real quadratic space, where $q_S$ denotes a quadratic form defined by an arbitrary positive symmetric matrix $S$.  
To use the result of Borcherds mentioned above we need an identification of the real hyperbolic space $H_N$ with the Grassmanian ${\cal D}$ of positive oriented lines in the quadratic space $V_N:=(\R^{N+2},q_Q)$, where $q_Q(a, x, b) := ab - q_S(x)$. Let $B_Q$ denote the bilinear form associated with $q_Q$. 
For every $(x,y)\in H_N$, we have a vector
\begin{equation}
  \label{eq:nu}
\nu(x,y):=\frac{1}{\sqrt{2}}{}^t(y+y^{-1}q_S(x) ,- y^{-1}x,y^{-1}) \in V_N
\end{equation}
satisfying $B_Q(\nu(x,y), \nu(x,y))=1$. It generates the positive, oriented line $\R\cdot\nu(x,y)$, which is an element in ${\cal D}$. In fact, we see that ${\cal D}=\{\R\cdot\nu(x,y)\mid (x,y)\in H_N\}$.

We now note that the quadratic space $V_N$ is isometric to $\R^{1,N+1}$, where $\R^{p,q}$ denotes the real vector space $\R^{p+q}$ with the quadratic form
\[
Q_{p,q}(x_1,x_2,\cdots,x_{p+q}):= 
\frac{1}{2}\left(\sum_{i=1}^px_i^2-\sum_{j=1}^qx_{p+j}^2\right).
\]
We slightly abuse the notation by using $\nu$ to represent the line generated by $\nu(x,y)$. Every line $\nu\in{\cal D}$ induces an isometry 
\begin{align*}
\iota_{\nu}:{V_N} & \to {\R \cdot \nu \oplus(\nu^{\perp},{q_S|_{{\nu}^{\perp}}})\simeq \R^{1,N+1}}\\
\lambda&\mapsto (\iota^+_\nu(\lambda), \iota^-_\nu(\lambda)),
\end{align*}
where 
  $$
 \iota_\nu^+( \lambda):=B_Q(\lambda,\nu)\nu,~
 \iota_\nu^-( \lambda):= \lambda -  \iota_\nu^+( \lambda) \in \nu^\perp
$$ 
are the components of $\lambda$. Note that $\iota^+_{\gamma \cdot \nu}(\gamma \cdot \lambda) = \gamma \cdot \iota^+_\nu( \lambda)$ for any $\gamma \in \mathcal{G}(\R)$ and $\lambda \in V_N$.

\subsection{Theta lift from $f$ and its coincidence with $F_f$}\label{Theta-lift}
We now resume the assumption that $L$ is even unimodular, thus $N = 8n$ for some $n \in \mathbb{N}$. We denote it also by $(L,q_S)$ and its direct sum with a hyperbolic plane by $(L_0, q_Q)$ as in section \ref{gps-hypsp}. Since $L$ is even unimodular, so is $L_0$.
Let ${\cal D}$ be the Grassmannian associated to $V_N = L_0 \otimes_\Z \R$.

To introduce the theta lift of a Maass cusp form $f$ of level one, let $P_n(x):= 2^{-2n - 3} x^{4n}$ a polynomial on $\R^{}$. 
The operator $\exp(-\partial_x^2/(8\pi v))$ acts on $P_n$ and produces a polynomial $\mathcal{P}_{n, v}$ on $\R^{}$, which is closely related to the physicists' Hermite polynomial by a change of variable. 
Furthermore, we can consider  $\mathcal{P}_{n, v}$ as a polynomial on $\R^{1, 8n + 1}$ after precomposing it with the surjection $\R^{1, 8n+1}\rightarrow\R$ that sends $(x_1, ..., x_{8n + 2})$ to $x_1$. 
Now, we define the theta series $\Theta_{L_0}$ on ${\mathfrak h}\times {\cal D}$ by the following absolutely convergent sum
\[
\Theta_{L_0}(\tau,\nu):=\sum_{\lambda\in L_0} \mathcal{P}_{n, v}(B_Q(\iota_\nu(\lambda), \nu))\exp(2 \pi\sqrt{-1}
(q_Q(\iota^+_\nu(\lambda))\tau + q_Q(\iota^-_\nu(\lambda)\bar{\tau})).
\]
Comparing to \cite[Section 4]{B}, it is easy to see that this is the theta kernel used by Borcherds with a homogeneous polynomial of degrees $(4n, 0)$. 
By its definition and the Poisson summation formula, we know that $v^{4n+1/2} \Theta_{L_0}(\tau, \nu)$ is a modular function in $\tau \in {\mathfrak h}$ with respect to $SL_2(\Z)$ for any $\nu \in {\cal D}$ (see e.g.\ \cite[Theorem 4.1]{B}).
Then the theta lift $\Phi_{L_0}(\nu,f)$ of the Maass cusp form $f$ is defined as
\[
\Phi_{L_0}(\nu,f):=\displaystyle\int_{SL_2(\Z)\backslash{\mathfrak h}}f(\tau)\overline{\Theta_{L_0}(\tau,\nu)}v^{4n+\frac{1}{2}}d\mu(\tau),
\]
where $d\mu(\tau):=v^{-2}dudv$.
Since $f$ is a cusp form, the integral above converges absolutely.
On the other hand, since
$$
B_Q(\iota_{\gamma \cdot \nu}(\lambda), \gamma \cdot \nu) = 
B_Q(\iota^+_{\gamma \cdot \nu}(\lambda), \gamma \cdot \nu) = 
B_Q(\iota^+_{\nu}(\gamma^{-1} \lambda), \nu) = 
B_Q(\iota_{\nu}(\gamma^{-1} \lambda), \nu)
$$
for any $\lambda \in V_{8n}$ and $\gamma \in \mathcal{G}(\R)$, we have $\Theta_{L_0}(\tau, \gamma\cdot \nu) = \Theta_{L_0}(\tau,  \nu)$ for any $\gamma \in \Gamma_S$ and $\tau \in \mathfrak{h}$. The integral $\Phi_{L_0}(\nu, f)$ is also left $\Gamma_S$-invariant.

If we choose another unimodular lattice $(L', q_{S'})$, then $L_0' := L' \oplus J$ is isomorphic to $L_0$ by the classification of indefinite unimodular lattices \cite[Chapitre V,~Section 2.2, Theorem 6]{Se}.
Different choices of such decomposition correspond to different cusps of the hyperbolic space $H_N$~(cf.~Remark \ref{Representability}), each of which gives the coordinates to express the Fourier expansion.
To do this, we follow \cite[Theorem 7.1]{B} and choose the isotropic vectors
\[
l:={}^t(1, 0,0),~\Check{l}:={}^t(0,0, 1)
\]
in $L_0$. We then have
\begin{align*}
l_{\nu}&:= \iota^+_\nu(l) = \frac{1}{\sqrt{2}y}\nu,~l_{\nu^{\perp}}:=l-l_{\nu},~B_Q(l_{\nu},l_{\nu})=-B_Q(l_{\nu^{\perp}},l_{\nu^{\perp}})=\frac{1}{2y^2},\\
\mu&=-\Check{l}+\frac{1}{2B_Q(l_{\nu},l_{\nu})}l_{\nu}+\frac{1}{{2B_Q(l_{\nu^{\perp}},l_{\nu^{\perp}})}}l_{\nu^{\perp}}=-\Check{l}+y^2(2l_{\nu}-l),
\end{align*}
and
\[
P_n(B_Q(\iota_{\nu}(\lambda), \nu))=
2^{-2n-3}B_Q(\lambda,\nu)^{4n}=2^{-3}y^{4n}B_Q(\lambda,l_{\nu})^{4n}.
\]
We furthermore note that the orthogonal complement of $l_{\nu^{\perp}}$ in $\nu^{\perp}$ is $L\otimes\R$, which means
$$
B_Q(\lambda, \mu) = 2y^2 B_Q(\lambda, l_\nu) = \sqrt{2}y B_Q(\lambda, \nu) = {}^t\lambda S x
$$
for any $\lambda \in L\otimes \R \subset V_{8n}$.
With the input datum above, we can apply \cite[Theorem 7.1]{B} to write out the Fourier expansion of $\Phi(\nu,f)$.
In the notation loc.\ cit., we have $M = L_0 = L^\sharp_0 = M'$, $K' = L^\sharp = L = K$, $\lambda_{w^+} = 0$, $\lambda_{w^-} = \lambda$,
$$
z_{v^+} = l_\nu,~
|z_{v^+}| = \sqrt{z_{v^+}^2} = \sqrt{2q_Q(l_\nu)},~
p_{w, h^+, h^-}
\begin{cases}
y^{4n}/8, &(h^+, h^-) = (4n, 0), \\
0, & \text{ otherwise.}
\end{cases}
$$
Therefore, the term involving $\Phi_K(w, p_{w, h, h}, F_K)$ vanishes identically and $(-\Delta)^j \overline{p}_{w, h^+, h^-}$ is identically zero for $j \ge 1$.
Since $L$ is unimodular, the term $\sum_{\delta \in M'/M, \delta\mid L = \lambda} \mathbf{e}(n(\delta, z'))$ in the third line of Borcherds' Theorem 7.1 becomes the factor 1.
Furthermore, the coefficient $c_{\delta, \lambda^2/2}(y)$ is $c(-q_S(\lambda)) W_{0, \sqrt{-1}r/2}(4\pi q_S(\lambda) v)v^{4n+1/2} $, which is the Fourier coefficient of $v^{4n+1/2} f(\tau)$.
The extra term of $v^{4n+1/2}$ comes from the way Borcherds normalized his input (which has an extra factor of $v^{m^+ + b^+/2}$). 
With these in hand and that $c(0) = 0$, Theorem 7.1 of \cite{B} simplifies to
\begin{align*}
\Phi_{L_0}(\nu(x, y), f) &=
\frac{y^{4n +1}}{4} \sum_{\lambda\in L^\sharp \backslash\{0\}}c(-|\lambda|_S^2) \sum_{m\ge 1}m^{4n}\exp(2\pi \sqrt{-1}m {}^t\lambda Sx) \displaystyle\\ 
&\quad \int_0^{\infty}W_{0,\sqrt{-1}r/2}(4\pi|\lambda|_S^2v)e^{-\frac{\pi m^2y^2}{v}-2\pi v|\lambda|_S^2}v^{-{2}}dv\\
&=y^{4n}\sum_{\lambda\in L^{\sharp} \backslash\{0\}}c(-|\lambda|_S^2)\sum_{m\ge 1}m^{n/2-1}\exp(2\pi\sqrt{-1}m{}^t\lambda Sx)|\lambda|_S^{}K_{\sqrt{-1}r}(4\pi m |\lambda|_Sy)\\
&= y^{4n}\sum_{\lambda\in L^{\sharp}\setminus\{0\}}
A(\lambda) K_{\sqrt{-1}r}(4\pi|\lambda|_Sy)\exp(2\pi\sqrt{-1}{}^t\lambda Sx),
\end{align*}
where $A(\lambda)$ is defined in \eqref{Fouriercoeff-lifting}.
Here, for the second equation, we have made the change of variable $v \mapsto 1/v$ and used the following integral identity
\[
\displaystyle\int_0^{\infty}\exp(-pt-a/(2t))W_{0,\sqrt{-1}r/2}(a/t)dt=2\sqrt{a/p}K_{\sqrt{-1}r}(2\sqrt{ap})
\]
(cf.~\cite[4.22 (22)]{EMOT})~with $a=4\pi|\lambda|^2_S$ and $p=\pi m^2y^2$. 
We now immediately see that $F_f$ coincides with $\Phi_{L_0}(\nu,f)$, which is left $\Gamma_S$-invariant.
\subsection{Adelic formulation of the lifting and the proof of Theorem \ref{Construction-lifting}}\label{Adelic-lift}
We reformulate the lifting $F_f$ in the adelic setting to complete the proof of Theorem \ref{Construction-lifting}. 

For this purpose we introduce the special orthogonal group $SO(S):={\cal H}\cap SL_N$ over $\Q$, where $SL_N$ denotes the $\Q$-algebraic group defined by the special linear group of degree $N$. It is easy to verify that the cosets ${\cal H}(\Q)\backslash{\cal H}(\A)/U_fU_{\infty}$ have representatives in $SO(S)(\A)$. 

We shall recall that the cosets $SO(S)(\Q)\backslash SO(S)(\A)/(SO(S)(\A)\cap U_fU_{\infty})$ are in bijection with the equivalence classes of the quadratic forms of the same genus with $S$~(cf.~\cite[ChapterII,~Section 9.25]{Shi}). 
To describe this bijection we recall that each $h\in SO(S)(\A)$ has a decomposition $h=au^{-1}$ with $(a,u)\in SL_N(\Q)\times(\prod_{p<\infty}SL_N(\Z_p)\times SL_N(\R))$ by the strong approximation theorem of $SL_N$. Then $S_h:={}^taSa={}^tuSu$ is in the same genus with $S$.  
The bijection above is induced by the mapping 
\[
h\mapsto S_h.
\]
Furthermore we remark that, if $h\in\cH(\A)\setminus SO(S)(\A)$, there exists $\delta_0 \in \cH(\Q)\setminus SO(S)(\Q)$ such that $\delta_0 h\in SO(S)(\A)$. 
We can thus say that $h\in\cH(\A)$ has a decomposition $h=au^{-1}$ with $(a,u)\in GL_N(\Q)\times(\prod_{p<\infty}SL_N(\Z_p)\times SL_N(\R))$ in general.  
We put $L_h:=(\prod_{p<\infty} h_p\Z_p^N\times\R^N)\cap\Q^N$ for $h=(h_v)_{v\le\infty}\in\cH(\A)$. 
Then we have $L_h=a\Z^N$.

We then see that ${\cal H}(\Q)\backslash{\cal H}(A)/U_fU_{\infty}$ can be viewed as a subset of the double coset space $SO(S)(\Q)\backslash SO(S)(\A)/SO(S)(\A)\cap U_fU_{\infty}$. 
We let $C(S):=\{S_1,~S_2,\ldots,S_c\}$ be the classes of the same genus with $S$ corresponding bijectively to a complete set $\{h_i\in SO(S)(\A_f) \mid 1\le i\le c\}$ of representatives for ${\cal H}(\Q)\backslash{\cal H}(\A)/U_fU_{\infty}$~(which are called {the $O$-classes in \cite[Chapter II,~Section 9.27]{Shi}}).

Let $f \in  S(SL_2(\Z);-(\frac{1}{4}+\frac{r^2}{4}))$ be a Maass cusp form with Fourier expansion $f(z)=\sum_{n\not=0}c(n)W_{0,\frac{\sqrt{-1}r}{2}}(4\pi|n|y)\exp(2\pi\sqrt{-1}nx)$. Let $\Lambda$ be the standard additive character of $\A/\Q$. We introduce the Fourier series
\begin{equation}\label{F-series}
F_f(n(x)a_ykg):=\sum_{\lambda\in\Q^n\setminus\{0\}}F_{f,\lambda}(n(x)a_ykg)~
\quad\forall(x,y,k,g)\in\A^n\times\R^{\times}_{+}\times K_{\infty}\times\cG(\A_f)
\end{equation}
with
\[
F_{f,\lambda}(n(x)a_ykg):=A_{\lambda}(g)y^{4n}K_{\sqrt{-1}r}(4\pi|\lambda|_Sy)\Lambda({}^t\lambda S x),
\]
where $A_{\lambda}(g)$ is defined by the following three conditions:
\begin{align}
A_{\lambda}\left(
\begin{pmatrix}
1 & & \\
  & h & \\
  &   & 1
\end{pmatrix}\right)&:=
\begin{cases}
|\lambda|_S\sum_{d|d_{\lambda}}c(-\displaystyle\frac{|\lambda|_S^2}{d^2})d^{4n-2}&(\lambda\in L_h)\\
0&(\lambda\in\Q^N\setminus L_h)
\end{cases},\label{Def-cond-1}\\
A_{\lambda}\left(
\begin{pmatrix}
\beta & & \\
         & h & \\
         &               & \beta^{-1}
\end{pmatrix}\right)
&:=||\beta||_{\A}^{4n}A_{||\beta||_{\A}^{-1}\lambda}\left(
\begin{pmatrix}
1 & & \\
  & h & \\
  &     & 1
\end{pmatrix}\right),\label{Def-cond-2}\\
A_{\lambda}(n(x)gk)&:=\Lambda({}^t\lambda Sx)A_{\lambda}(g)~\forall (x,g,k)\in\A_f^N\times\cG(\A_f)\times K_f\label{Def-cond-3}.
\end{align}
Here
\begin{enumerate}
\item $(\beta,h)\in\A_f^{\times}\times\cH(\A_f)$ and $||\beta||_{\A}$ denotes the idele norm of $\beta$,
\item $d_{\lambda}$ denotes the greatest common divisor of the non-zero entries in $a^{-1}\lambda\in\Z^N~(=a^{-1}L_h)$.
\end{enumerate}
Let us remark that we are using the same notation $F_f$ for both the adelic lift above and the non-adelic lift of the previous section. This should not lead to any confusion since it will be clear from the context which lift we are discussing.

We note that the definition of $d_{\lambda}$ does not depend on the decomposition $h=au^{-1}$~(as we are going to see in the proof of the following lemma).
\begin{lem}\label{Well-def-smfct}
The adelic Fourier series defining the adelic $F_f$ is well-defined and is a left $\cP(\Q)$-invariant and right $K$($=K_fK_{\infty}$)-invariant smooth function on $\cG(\A)$. 
\end{lem}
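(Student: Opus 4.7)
The plan is to verify four items sequentially: that $A_\lambda$ is well-defined on $\cG(\A_f)$, that the resulting series defines a smooth function on $\cG(\A)$, that it is right $K$-invariant, and that it is left $\cP(\Q)$-invariant.

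For well-definedness of $A_\lambda$ the first step is to note that in (\ref{Def-cond-1}), $d_\lambda$ is independent of the choice of decomposition $h=au^{-1}$: two such $a$'s differ by an element of $\GL_N(\Q)\cap\bigl(\prod_p \SL_N(\Z_p)\times \SL_N(\R)\bigr)=\SL_N(\Z)$, which preserves the gcd of the nonzero entries of $a^{-1}\lambda$. The three conditions combined with the $p$-adic Iwasawa decomposition $\cG(\Q_p)=\cN(\Q_p)\cL(\Q_p)K_p$ determine $A_\lambda$ on all of $\cG(\A_f)$, and the consistency to check is on $\cP(\A_f)\cap K_f$: writing such an element as $n(x_p)\diag(\beta_p,h_p,\beta_p^{-1})k_p$, the condition $pL_{0,f}=L_{0,f}$ forces $\beta_p\in\hat{\Z}^\times$ (so $||\beta_p||_\A=1$), $h_p\in U_f$ (so $L_{h_p}=L$ and $d_\lambda$ is unchanged), and integrality of $x_p$ with respect to $L$ (so $\Lambda_f({}^t\lambda Sx_p)=1$ for $\lambda\in L$); these together give $A_\lambda(p)=A_\lambda(e)$ as required. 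Local constancy of $A_\lambda$ on $\cG(\A_f)$ will then be immediate from (\ref{Def-cond-3}); smoothness in the archimedean variables follows from the smoothness of the $K$-Bessel function and the character; absolute convergence of the series comes from the exponential decay of $K_{\sqrt{-1}r}$ combined with polynomial bounds on the Fourier coefficients $c(n)$ of $f$ and on the divisor factor in \eqref{Fouriercoeff-lifting}.

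Right $K$-invariance is routine: under $K_f$ by (\ref{Def-cond-3}) with $x=0$, and under $K_\infty$ by the archimedean Iwasawa decomposition $G_\infty=N_\infty A_\infty K_\infty$, which shows that each $F_{f,\lambda}(n(x)a_ykg)$ depends only on $x,y,g$. For left $\cP(\Q)$-invariance I would treat $\cN(\Q)$ and $\cL(\Q)$ separately: for $n(x_0)\in\cN(\Q)$, each summand transforms by $\Lambda({}^t\lambda Sx_0)=1$ since ${}^t\lambda Sx_0\in\Q$ and $\Lambda$ is trivial on $\Q$. For $\ell_0=\diag(\alpha,\delta,\alpha^{-1})\in\cL(\Q)$, I would commute $\ell_0$ past $n(x)$ to produce $n(\alpha\delta x)$; then $\ell_{0,\infty}a_y$ factors as $a_{|\alpha|_\infty y}\cdot k_{\alpha,\delta}$ with $k_{\alpha,\delta}\in K_\infty$, and applying (\ref{Def-cond-2}) to the remaining finite Levi factor $\ell_{0,f}$ contributes $||\alpha_f||_\A^{4n}=|\alpha|_\infty^{-4n}$, which exactly cancels the archimedean rescaling $|\alpha|_\infty^{4n}$ by the product formula for $\Q^\times$. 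Finally, reindexing the sum by $\lambda\mapsto\delta\lambda$ uses $L_{\delta h}=\delta L_h$ together with $|\delta\lambda|_S=|\lambda|_S$ and $d_{\delta\lambda}=d_\lambda$ to obtain invariance of the total series.

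The main obstacle will be the $\cL(\Q)$-invariance step, which requires coordinating three effects---the idele-norm scaling in (\ref{Def-cond-2}), the archimedean rescaling by $|\alpha|_\infty$, and the isometric action of $\delta\in\cH(\Q)$ on $\lambda$ and on the lattices $L_h$---so that the reindexed Fourier series matches term-by-term. The consistency check on $\cP(\A_f)\cap K_f$ for well-definedness is more mechanical but still requires careful tracking of how $K_f$-elements decompose and act on $L_{0,f}$.
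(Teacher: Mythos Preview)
Your approach is essentially the same as the paper's, and the outline is sound. There is, however, a real omission in your $\cL(\Q)$-invariance step. Applying (\ref{Def-cond-2}) does not merely contribute the scalar $\|\alpha_f\|_{\A}^{4n}$: it also shifts the index, sending $A_\lambda$ to $A_{\|\alpha_f\|_{\A}^{-1}\lambda}=A_{|\alpha|\lambda}$. At the same time the archimedean factor becomes $K_{\sqrt{-1}r}(4\pi|\lambda|_S\,|\alpha|\,y)$, and the character becomes $\Lambda({}^t(\alpha\delta^{-1}\lambda)Sx)$. So after the scalar cancellation you describe, there are still $|\alpha|$-dependencies in the Bessel argument, in the character, and in the Fourier index; the reindexing that restores the original series is $\lambda\mapsto \alpha\delta^{-1}\lambda$ (for $\alpha>0$), not merely $\lambda\mapsto\delta\lambda$. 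This is exactly the content of the paper's key identity
\[
\alpha^{4n}A_\lambda\!\left(\begin{pmatrix}\alpha_f&&\\&\delta_f h&\\&&\alpha_f^{-1}\end{pmatrix}\right)=A_{\alpha\delta^{-1}\lambda}\!\left(\begin{pmatrix}1&&\\&h&\\&&1\end{pmatrix}\right),
\]
which you should verify directly from (\ref{Def-cond-1}) and (\ref{Def-cond-2}) together with $L_{\delta_f h}=\delta L_h$ and the equality of the relevant gcd's. The case $\alpha<0$ is then handled by observing that $\diag(-1,1_N,-1)$ lies in $K_f\cap K_\infty$ (or, equivalently, by noting $A_{-\lambda}=A_\lambda$ and reindexing once more by $\lambda\mapsto-\lambda$). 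With this correction your argument goes through and matches the paper's.
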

\begin{proof}
We should first note that $A_{\lambda}$ defines a well-defined function in $h\in\cH(\A_f)$ for each fixed $\lambda\in\Q^N$. 
To see this we have to check that $d_{\lambda}$ does not depend on the decomposition $h=au^{-1}$. If we take another decomposition $h=a'{u'}^{-1}$ with $(a',u')\in GL_N(\Q)\times(\prod_{p<\infty}SL_N(\Z_p)\times SL_N(\R))$, 
we have that $a^{-1}a'=u^{-1}u'\in SL_N(\Z)$ and see that the definition of $d_{\lambda}$ remains the same even if we replace $a$ by $a'$. 
 
By the definition of $A_{\lambda}\left(\left(\begin{smallmatrix}
\beta & & \\
  & h & \\
  &    & \beta^{-1}
\end{smallmatrix}\right)\right)$ in (\ref{Def-cond-1}) and (\ref{Def-cond-2}) we can verify that this is right $(\prod_{p<\infty}\Z_p^{\times})\times U_f(=\cL(\A_f)\cap K_f)$-invariant as a function of $(\beta,h)\in\A_f^{\times}\times\cH(\A_f)(=\cL(\A_f))$ and that 
$\alpha^{\frac{N}{2}}A_{\lambda}\left( \left(\begin{smallmatrix}
\alpha_f & & \\
  & \delta_fh & \\
  &    & \alpha_f^{-1}
\end{smallmatrix}\right)\right)=
A_{\alpha\delta^{-1}\lambda}\left(\left(\begin{smallmatrix}
1 & & \\
  & h & \\
  &    & 1
\end{smallmatrix}\right)\right)$ for $(\alpha,\delta)\in\Q^{\times}_+\times\cH(\Q)$, where $(\alpha_f,\delta_f)$ denotes the finite adele part of $(\alpha,\delta)\in\Q^{\times}_+\times\cH(\Q)$. From the latter we deduce that $F_f$ is left $\cP(\Q)$-invariant and that, as a result of (\ref{Def-cond-3}), $A_{\lambda}$ is well-defined on $\cG(\A_f)$ and right $K_f$-invariant. 
To finish the proof we note that the archimedean part {$y^{4n}K_{\sqrt{-1}r}(4\pi|\lambda|_Sy)\exp(2\pi\sqrt{-1}({}^t\lambda Sx))$} with $(y,x)\in\R_+^{\times}\times\R^N$ is a smooth right $K_{\infty}$-invariant function on $G_{\infty}$. As a result we have seen that {$F_f$} satisfies the desired property in the assertion, and we are thus done.
\end{proof}

For $r\in\C$ we now introduce the space $\cM(\cG,r)$ of smooth functions $F$ on $\cG(\A)$ satisfying the following conditions: 
\begin{enumerate}
\item $\Omega\cdot F=\displaystyle\frac{1}{2N}\left(r^2-\displaystyle\frac{N^2}{4}\right)F$, where $\Omega$ is the Casimir operator defined at \eqref{Casimir-defn},
\item for any $(\gamma,g,k)\in \cG(\Q)\times \cG(\A)\times K$, we have $F(\gamma gk)=F(g)$,
\item $F$ is of moderate growth.
\end{enumerate}
We further remark that $F\in\cM(\cG,r)$ has the Fourier expansion
\[
F(g)=\sum_{\lambda\in\Q^N}F_{\lambda}(g),\quad F_{\lambda}(g):=\displaystyle\int_{\A^N/\Q^N}F(n(x)g)\Lambda(-{}^t\lambda Sx)dx,
\]
where $dx$ is the invariant measure normalized so that the volume of $\A^N/\Q^N$ is one. 
We call $F$ a cusp form if $F_0\equiv 0$ in the Fourier expansion. 

Now let us explain that this cuspidality condition of $F\in\cM(\cG,r)$ implies the vanishing of the constant term of the non-adelic Fourier expansion of $F$ at every $\Gamma_S$-cusp. Let $c\in\cG(\Q)$ be a representative of a $\Gamma_S$-cusp and decompose $c$ into $c_fc_{\infty}$ with $c_f\in\cG(\A_f)$ and $c_{\infty}\in G_{\infty}$. Furthermore let $c$ correspond to a representative $h$ of $\cH(\Q)\backslash\cH(\A)/U_fU_{\infty}$ via the bijection in Lemma \ref{Classnum-Cusps}. From Proposition \ref{four-exp-prop} we then see that the non-adelic Fourier expansion of $F(n(x_{\infty})a_{y_{\infty}}
\left(\begin{smallmatrix}
1 & & \\
 & h & \\
 &  & 1
\end{smallmatrix}\right))$ for $(x_{\infty},y_{\infty})\in\R^n\times\R^{\times}_+$ is expressed as
\[
\sum_{\lambda\in L_h\setminus\{0\}}C_{\lambda}(h)W_{\lambda}(a_{y_{\infty}})\exp(2\pi\sqrt{-1}({}^t\lambda Sx_{\infty}))
\]
with Fourier coefficients $C_{\lambda}(h)$. 
Here note that the above $F$ translated by 
$\left(\begin{smallmatrix}
1 & & \\
 & h & \\
 &  & 1
\end{smallmatrix}\right)$ is right $\left(\begin{smallmatrix}
1 & & \\
 & h & \\
 &  & 1
\end{smallmatrix}\right)K_f\left(\begin{smallmatrix}
1 & & \\
 & h & \\
 &  & 1
\end{smallmatrix}\right)^{-1}$-invariant, which implies that the summation of the expansion runs over $L_h\setminus\{0\}$ with the even unimodular lattice $L_h$. On the other hand, we have that $F(g_{\infty}\left(\begin{smallmatrix}
1 & & \\
 & h & \\
 &  & 1
\end{smallmatrix}\right))=F(c_fg'_{\infty})=F(c_{\infty}^{-1}g'_{\infty})$ with a suitable change of variables $g_{\infty}\rightarrow g'_{\infty}$ in $G_{\infty}$. The Fourier expansion of $F(c^{-1}g'_{\infty})$ is nothing but the non-adelic expansion at a $\Gamma_S$-cusp $c^{-1}$. We therefore see the vanishing of $F$ at every $\Gamma_S$-cusp in the sense of the non-adelic Fourier expansion~(for this see Remark \ref{Representability}). 
\begin{thm}
Given a Maass cusp form $f\in S(SL_2(\Z);-(\frac{1}{4}+\frac{r^2}{4}))$ we define $F_f$ as in (\ref{F-series}). Then $F_f$ is a cusp form in $\cM(\cG,\sqrt{-1}r)$. 
\end{thm}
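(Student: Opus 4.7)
Lemma \ref{Well-def-smfct} already establishes that $F_f$ is well-defined, smooth, left $\cP(\Q)$-invariant, and right $K$-invariant. It remains to verify (a) the Casimir eigenvalue equation, (b) left $\cG(\Q)$-invariance, (c) moderate growth, and (d) cuspidality.

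The easy items are (a), (c), (d). For (a), each archimedean Fourier summand in (\ref{F-series}) is proportional to $y^{4n} K_{\sqrt{-1}r}(4\pi|\lambda|_S y)\Lambda({}^t\lambda S x)$, which by the proof of Proposition \ref{four-exp-prop} is the moderate-growth solution to the relevant Bessel/Whittaker ODE with Casimir eigenvalue $\frac{1}{2N}((\sqrt{-1}r)^2 - N^2/4)$. The exponential decay of $K_{\sqrt{-1}r}(Y)$ as $Y\to\infty$, combined with a polynomial bound on $|A(\lambda)|$ coming from standard estimates on $c(n)$ and a divisor bound, gives absolute convergence of (\ref{F-series}), justifies termwise differentiation, and yields both (a) and (c). Item (d) is immediate: the sum in (\ref{F-series}) excludes $\lambda = 0$, so the adelic constant term $F_{f,0}$ vanishes by uniqueness of the Fourier expansion.

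The main obstacle is (b), and the plan is to reduce it to the theta-lift identification from Section \ref{Theta-lift}. First I restrict $F_f$ to $G_\infty \hookrightarrow \cG(\A)$ (trivial finite component): setting $g_0 = 1 \in \cG(\A_f)$ in (\ref{F-series}) and using (\ref{Def-cond-1}) with $h = 1$ (so that $L_h = L$), one recovers exactly the non-adelic $F_f$ of Section \ref{Explicit-lift}, which by Section \ref{Theta-lift} equals the theta lift $\Phi_{L_0}(\nu,f)$ and is therefore left $\Gamma_S$-invariant. Next, given arbitrary $\gamma_0 \in \cG(\Q)$ and $g \in \cG(\A)$, strong approximation (Lemma \ref{Classnum-Cusps}(1)) lets me write $g = \gamma g_\infty k_f$ with $\gamma \in \cG(\Q)$, $g_\infty \in G_\infty$, $k_f \in K_f$; right $K_f$-invariance then reduces the desired equality $F_f(\gamma_0 g) = F_f(g)$ to checking it on elements with trivial finite component, where $\Gamma_S$-invariance of the theta lift applies.

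The technical content I expect to be hardest is to verify that the three conditions (\ref{Def-cond-1})--(\ref{Def-cond-3}) defining $A_\lambda$ are precisely those making $F_f$ the unique right-$K$-invariant, left-$\cG(\Q)$-invariant extension of the theta lift from $G_\infty$ to $\cG(\A)$. Equivalently, for any representative $h$ of a class in $\cH(\Q)\backslash\cH(\A)/U_fU_\infty$, the Fourier expansion of the theta lift about the corresponding $\Gamma_S$-cusp should use the lattice $L_h$ with coefficients (\ref{Fouriercoeff-lifting}) with $L$ replaced by $L_h$; this cusp-by-cusp compatibility is exactly what (\ref{Def-cond-1}) and the idelic-scaling identity (\ref{Def-cond-2}) are engineered to encode. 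Once this compatibility is in hand, the $\Gamma_S$-invariance of the theta lift transports through strong approximation to $\cG(\Q)$-invariance of the adelic $F_f$, completing the proof that $F_f \in \cM(\cG,\sqrt{-1}r)$ is a cusp form.
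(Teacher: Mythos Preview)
Your approach matches the paper's: invoke Lemma \ref{Well-def-smfct}, identify $F_f|_{G_\infty}$ with the theta lift to obtain $\Gamma_S$-invariance, upgrade to $\cG(\Q)$-invariance via strong approximation, and read off the Casimir eigenvalue, moderate growth, and cuspidality directly from the Fourier expansion. One caution on your strong-approximation step: writing $g=\gamma g_\infty k_f$ and applying right $K_f$-invariance leaves you with $F_f(\gamma g_\infty)$, whose finite component $\gamma_f$ is not trivial, so the claimed reduction to ``elements with trivial finite component'' is not literal; however, your final paragraph correctly isolates the actual content (that (\ref{Def-cond-1})--(\ref{Def-cond-3}) are engineered to match the theta lift's Fourier expansion at every cusp), which the paper likewise absorbs into the phrase ``the standard argument in terms of the strong approximation theorem.''
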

\begin{proof}  
By the strong approximation theorem for $\cG$~(cf.~Lemma \ref{Classnum-Cusps} (1)) we have an isomorphism $\cM(\cG(\Q),\sqrt{-1}r)\simeq\cM(\Gamma_S,\sqrt{-1}r)$ given by $F\mapsto F|_{\cG(\R)}$. 
 The right $K$-invariance  for the adelized $F_f$ follows immediately from Lemma \ref{Well-def-smfct}. 
We have proved the left $\Gamma_S$-invariance of $F_f|_{\cG(\R)}$ since it coincides with the non-adelic lift. 
By the standard argument in terms of the strong approximation theorem we can deduce that $F_f$ is a left $\cG(\Q)$-invariant function. 
To see that $F_f$ is of moderate growth, note that this is determined by its restriction to $\cG(\R)$. In fact, $F_f|_{\cG(\R)}$ is given by the Fourier series with rapidly decreasing terms and its Fourier coefficients  $A(\lambda)$ satisfy $|A(\lambda)|=O(|\lambda|_S^{\kappa})$ with some $\kappa>0$, which follows from the growth property of the Fourier coefficients $c(n)$ of $f$. From this we verify that $F_f|_{\cG(\R)}$ is at most of polynomial order by estimating the Fourier series of $F_f$. The action of the Casimir $\Omega$ follows from the Fourier expansion of $F_f$. Hence, we get $F_f \in \cM(\cG,\sqrt{-1}r)$. 
From the Fourier series of $F_f$ it is straightforward to see that $F_f$ is cuspidal.
\end{proof}

We are left with discussing the  non-vanishing of $F_f$. To this end we need the following lemma, which is similar to \cite[Lemma 4.5]{Mu-N-P}.
\begin{lem}\label{Maass-form-non-van-coeff}
Let $f\in S(SL_2(\Z);-(\frac{1}{4}+\frac{r^2}{4}))\setminus\{0\}$ with Fourier coefficients $c(m)$. Then, there exist $M > 0, M \in \Z$, such that $c(-M) \neq 0$.
\end{lem}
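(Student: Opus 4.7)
The plan is to argue by contradiction: suppose $c(-M) = 0$ for every positive integer $M$, and derive $f \equiv 0$. The strategy exploits two Mellin transforms of $f$ along the imaginary axis, combined with the $S$-invariance $f(-1/\tau) = f(\tau)$, to produce two distinct functional equations for the Dirichlet series $L(f,s) := \sum_{n \ge 1} c(n) n^{-s}$; these will turn out to be mutually incompatible unless $L(f,\cdot) \equiv 0$.

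First I would introduce
$$
I(s) := \int_0^{\infty} f(\sqrt{-1}\,v)\, v^{s-1}\, dv, \qquad
J(s) := \int_0^{\infty} (\partial_u f)(u + \sqrt{-1}\,v)|_{u=0}\, v^{s-1}\, dv,
$$
which converge to entire functions of $s$, since $f$ and $\partial_u f$ decay exponentially as $v \to \infty$ by cuspidality, with the decay transferred to $v \to 0$ via the $S$-invariance. Inserting the Fourier expansion of $f$ and using the assumed vanishing of all negative Fourier coefficients, both integrals collapse to
$$
I(s) = (4\pi)^{-s} M(s)\, L(f,s), \qquad
J(s) = 2\pi\sqrt{-1}\,(4\pi)^{-s} M(s)\, L(f,s-1),
$$
where $M(s) := \int_0^{\infty} W_{0,\sqrt{-1}r/2}(v)\, v^{s-1}\, dv$ can be evaluated, using $W_{0,\mu}(v) = \sqrt{v/\pi}\, K_\mu(v/2)$ and the standard Mellin transform of $K_\mu$, as an explicit product of two gamma functions.

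Next I would derive two functional equations. The identity $f(-1/\tau) = f(\tau)$ at $\tau = \sqrt{-1}\,v$ gives $f(\sqrt{-1}/v) = f(\sqrt{-1}\,v)$, so substituting $v \mapsto 1/v$ in $I(s)$ yields $I(s) = I(-s)$. Differentiating the same invariance in $u$ at $u = 0$, using $\partial_u(-1/\tau)|_{u=0} = -v^{-2}$, produces $\partial_u f(\sqrt{-1}/v) = -v^2\, \partial_u f(\sqrt{-1}\,v)$, from which $J(s) = -J(2-s)$. Translating through the explicit formulas for $I$ and $J$, these become
\begin{align*}
L(f,-s) &= \frac{M(s)}{M(-s)}\, (4\pi)^{-2s}\, L(f,s),\\
L(f,-s) &= -\frac{M(s+1)}{M(1-s)}\, (4\pi)^{-2s}\, L(f,s).
\end{align*}
If $L(f,\cdot) \not\equiv 0$, equating these yields the identity $M(s)M(1-s) + M(-s)M(s+1) \equiv 0$; specializing to $s = 0$ reduces this to $2 M(0) M(1) = 0$. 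But the explicit formula for $M$ shows that $M(0)$ and $M(1)$ are positive constants times $|\Gamma(1/4 + \sqrt{-1}r/4)|^2$ and $|\Gamma(3/4 + \sqrt{-1}r/4)|^2$ respectively, hence strictly positive (using $r \in \R$ by the Selberg bound). This contradiction forces $L(f,\cdot) \equiv 0$, so $c(n) = 0$ for all $n \ge 1$, and combined with the hypothesis $c(-n) = 0$ we get $f \equiv 0$, contradicting $f \neq 0$.

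The main obstacle is the derivation and justification of the second functional equation $J(s) = -J(2-s)$: one must verify the absolute convergence of $J$ as an entire function (the $S$-invariance is needed to transfer the exponential decay of $\partial_u f$ from $v \to \infty$ to $v \to 0$) and carefully track signs when differentiating $f(-1/\tau) = f(\tau)$ in the non-holomorphic setting. Once this is in hand, the contradiction at $s = 0$ follows transparently from positivity of the gamma-function values.
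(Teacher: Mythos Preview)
Your argument is correct and is at bottom the same idea as the paper's, though packaged differently. The paper splits $f$ into its even and odd parts $f_1(z)=\tfrac12(f(z)+f(-\bar z))$ and $f_2(z)=\tfrac12(f(z)-f(-\bar z))$; under the hypothesis $c(-m)=0$ these share the same positive Fourier coefficients, hence $L(s,f_1)=L(s,f_2)$, while the completed $L$-functions of an even versus an odd Maass form satisfy functional equations whose archimedean factors differ by a shift of $1$ --- an incompatibility unless the $L$-function vanishes identically. Your two Mellin integrals are exactly these two completed $L$-functions in disguise: since $f_2$ vanishes on the imaginary axis and $\partial_u f_1|_{u=0}=0$, your $I(s)$ sees only $f_1$ and your $J(s)$ sees only $f_2$; the relations $I(s)=I(-s)$ and $J(s)=-J(2-s)$ are precisely the even and odd functional equations, and your contradiction $2M(0)M(1)=0$ is the concrete manifestation of the gamma-factor mismatch the paper invokes. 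So the paper's proof is more conceptual and cites the standard dichotomy for even/odd Maass forms, whereas yours is a self-contained derivation that reproves that dichotomy by hand; the tradeoff is brevity versus explicitness. One small remark: when you pass from the two functional equations for $L(f,\cdot)$ to the identity $M(s)M(1-s)+M(-s)M(s+1)\equiv 0$, you should say a word that this holds as an identity of meromorphic functions (both sides equal $L(f,-s)/L(f,s)$ off the discrete zero set of $L$, hence everywhere by analytic continuation) before specializing to $s=0$.
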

\begin{proof}
Assume that $c(m) = 0$ for all $m < 0$. Set $f_1(z) = (f(z)+f(-\bar{z}))/2$ and $f_2(z) = (f(z)-f(-\bar{z}))/2$. Then, $f_1, f_2$ are elements of $S(SL_2(\Z);-(\frac{1}{4}+\frac{r^2}{4}))$. In addition, $f_1$ is an even Maass form and $f_2$ is an odd Maass form, with the property that they have the exact same Fourier coefficients corresponding to positive indices. This implies that the $L$-functions for $f_1$ and $f_2$ satisfy $L(s, f_1) = L(s, f_2)$. On the other hand, $L(s,f_1)$ and $L(s,f_2)$ satisfy functional equations with the gamma factors shifted by $1$. If $L(s, f_1) \neq 0$, we obtain an identity of gamma factors, which can be checked to be impossible. This gives us that $f$ has to be zero, a contradiction.
\end{proof}
By Lemma \ref{Maass-form-non-van-coeff}, there is the smallest positive integer $M_0$ such that $c(-M_0)\neq 0$. 
For $n \in \mathbb{N}$, let $E^n_{8}$ be the direct sum of $n$ copies of the $E_8$ lattice. Then there exists {$\lambda_0 \in E^n_{8}$} with norm $M_0$. This follows from the case $n = 1$, which holds since the theta function associated to $E_8$ is the Eisenstein series of weight 4 on $SL_2(\Z)$.
From the Fourier expansion near the cusp determined by $L_0 \cong J \oplus E^{n}_8$~(cf.~Remark \ref{Representability}), we see that 
{$A_{\lambda_0}\not=0$}. Thus $F_f\not\equiv 0$ for a non-zero $f$, which finishes the proof of Theorem \ref{Construction-lifting}. 

In addition, let us note that Weyl's law for $SL_2(\Z)$ by Selberg~(cf.~\cite[Section 11.1]{Iwn}) implies the existence of Maass cusp forms for $SL_2(\Z)$. 
As a result the argument so far implies the following:
\begin{prop}\label{Existence-Maass-Cuspforms}
There exists non-zero $F_f$.
\end{prop}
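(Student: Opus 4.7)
The plan is to reduce the statement to the existence of a non-zero Maass cusp form for $SL_2(\Z)$, which is classical. All the heavy lifting for this proposition has already been carried out in the preceding discussion of Theorem \ref{Construction-lifting} and Lemma \ref{Maass-form-non-van-coeff}; the proposition is essentially a corollary that packages those results together with a standard existence result from the spectral theory of $SL_2(\Z)$.

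First, I would invoke Selberg's Weyl law for $SL_2(\Z)$ as referenced in \cite[Section 11.1]{Iwn}. This guarantees that for a dense set of spectral parameters $r \in \R$, the space $S(SL_2(\Z);-(\tfrac{1}{4}+\tfrac{r^2}{4}))$ is non-trivial. In particular, there exists some $r \in \R$ and a non-zero $f$ in this space. Any such $f$ has a non-vanishing Fourier coefficient $c(-M_0)$ for some smallest positive integer $M_0$, as established in Lemma \ref{Maass-form-non-van-coeff}.

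Next, I would apply Theorem \ref{Construction-lifting} to this $f$: since $f$ is non-zero, the associated lift $F_f$ is a non-zero element of $\cM(\Gamma_S, \sqrt{-1}r)$. For concreteness one may also recall the explicit witness to non-vanishing given in the preceding paragraph: using the cusp of $L_0$ coming from the decomposition $L_0 \cong J \oplus E_8^n$ (cf.\ Remark \ref{Representability}) together with the representability of $M_0$ by $E_8^n$ (which follows from the fact that the theta series of $E_8$ equals the weight $4$ Eisenstein series for $SL_2(\Z)$, whose Fourier coefficients are all positive), one obtains a vector $\lambda_0 \in E_8^n$ of norm $M_0$ for which $A(\lambda_0) \neq 0$, confirming $F_f \not\equiv 0$ directly from its Fourier expansion.

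There is no serious obstacle here: the only non-trivial input beyond what has already been proven is Selberg's existence theorem for Maass cusp forms on $SL_2(\Z)$, which is entirely standard. The proof thus consists of a one- or two-sentence citation argument assembling Selberg's result with Theorem \ref{Construction-lifting}.
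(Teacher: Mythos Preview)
Your proposal is correct and matches the paper's own argument essentially line for line: the paper likewise cites Selberg's Weyl law for $SL_2(\Z)$ (via \cite[Section 11.1]{Iwn}) to obtain a non-zero Maass cusp form $f$, and then appeals to the non-vanishing part of Theorem~\ref{Construction-lifting} (proved just above using Lemma~\ref{Maass-form-non-van-coeff} and the representability of integers by $E_8^n$) to conclude $F_f \not\equiv 0$. There is nothing to add or correct.
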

\section{Hecke theory for the lifting}\label{Hecke-theory}
We are going to discuss the Hecke theory of our lifting. 
In fact, we will show that if $f$ is a Hecke eigenform then so is the lift $F_f$, and we can compute the Hecke eigenvalues of $F_f$ explicitly in terms of those of $f$. 
The method is to use the non-archimedean local theory by Sugano \cite[Section 7]{Su} for the Jacobi form formulation of the Oda-Rallis-Schiffmann lifting \cite{Od},~\cite{Ra-Sch}. 
\subsection{Sugano's local theory}\label{Sugano-local}
In this section we work over a non-archimedean local field $F$ of characteristic not equal to two. 
Let $\varpi$ be a prime element of $F$ and let ${\mathfrak o}$ be the ring of integers in $F$. 
Let $n_0 \leq 4$ and let $S_0\in M_{n_0}(F)$ be an anisotropic even symmetric matrix of degree $n_0$. We introduce the $m \times m$ matrix $J_m:=\left( \begin{smallmatrix}
  &             &1\\
  & \iddots  & \\
1&             &            
\end{smallmatrix}\right)$. 
We denote by $G_m$ the group of $F$-valued points of the  orthogonal group of degree $2m+n_0$ defined by the symmetric matrix  $Q_m:=
\left(\begin{smallmatrix}
 & & J_m\\
& S_0 & \\
J_m & &
\end{smallmatrix}\right)$. When $m=0$ we understand that $Q_m=S_0$. 
In what follows, we suppose that $L_m:={\mathfrak o}^{2m+n_0}$ is a maximal lattice with respect to $Q_m$. Let $K_m$ be the maximal open compact subgroup of $G_m$ defined by the maximal lattice $(L_m,Q_m)$, namely
\[
K_m:=\{g\in G_m\mid gL_m=L_m\}.
\]
We regard $G_i$ with $i\le m$ as a subgroup of $G_m$ by embedding $G_i$ into the middle $(2i+n_0)\times(2i+n_0)$ block of $G_m$. We also regard $K_i$ with $i\le m$ as a subgroup of $K_m$ similarly. 
 
Hereafter we normalize the invariant measure of $G_{m}$ so that the volume of $K_i$ is one for each $i\le m$,  which is justified in view of the existence of the quotient measure for $G_{i+1}/G_i$.

By $\cH_m$ we denote the Hecke algebra for $(G_m,K_m)$. Let $C_m^{(r)}\in\cH_m$ be defined by the double coset $K_mc_m^{(r)}K_m$, where
\[
c_m^{(r)}:=\diag(\varpi,\cdots,\varpi,1,\dots,1,\varpi^{-1},\cdots,\varpi^{-1})\in G_m,
\]
which is the diagonal matrix whose first $r$ entries and last $r$ entries are $\varpi$ and $\varpi^{-1}$ respectively. 
 As is remarked in \cite[Section 7]{Su}, $\{C_m^{(r)}\mid 1\le r\le m\}$ forms generators of the Hecke algebra $\cH_m$. 
Note that the Satake isomorphism also holds for these orthogonal groups although they are not connected~(cf.~\cite[Theorem 5,~Remark 1 after Theorem 3]{Sa}).

Assume that $m\ge 1$ and for $1\le i\le m$, let $n_i(x):={
\left(\begin{smallmatrix}
1 & -{}^txQ_{i-1} & -\frac{1}{2}{}^txQ_{i-1}x\\
   & 1_{2i-2+n_0} & x\\
   &            & 1
\end{smallmatrix}\right)}\in G_{i}$ for $x\in F^{2i-2+n_0}$. 
By $L_{m-1}^{\sharp}$ we denote the dual lattice of $L_{m-1}$ with respect to $Q_{m-1}$. We need the notion that $\lambda\in L_{m-1}^{\sharp}\setminus\{0\}$ is primitive or reduced as follows~(cf.~\cite[p44]{Su}):
\begin{defn}\label{prim-red}
(1)~A vector $\lambda\in L_{m-1}^{\sharp}$ is defined to be primitive if its $(2m-2+n_0)$-th entry is equal to $1$.\\
(2)~A primitive $\lambda\in L_{m-1}$ is called reduced (with respect to $Q_{m-1}$) if 
\[
\begin{pmatrix}
\varpi^{-1} & & \\
         & 1_{2m-4+n_0} &  \\
         &            &\varpi
\end{pmatrix}n_{m-1}(x)\lambda\not\in \varpi L_{m-1}^{\sharp}
\]
for any $x\in F^{2m-4+n_0}$.
\end{defn}
\begin{lem}\label{reduced}
Suppose that $L_{m-1}^{\sharp}=L_{m-1}$, namely $L_{m-1}$ is self-dual. For a primitive $\lambda\in L_{m-1}^{\sharp}$, $\lambda$ is reduced if and only if the $\varpi$-adic order of $\frac{1}{2}{}^t\lambda Q_{m-1}\lambda$ is not greater than $1$.
\end{lem}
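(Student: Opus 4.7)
The plan is to verify the equivalence by a direct computation, using the block decomposition of $Q_{m-1}$ and the explicit form of $n_{m-1}(x)$. Separating the first and last coordinates of $Q_{m-1}$ gives $Q_{m-1}=\bigl(\begin{smallmatrix}0 & 0 & 1\\ 0 & Q_{m-2} & 0\\ 1 & 0 & 0\end{smallmatrix}\bigr)$, and the primitivity of $\lambda\in L_{m-1}^\sharp=L_{m-1}$ lets one write $\lambda={}^t(\lambda_1,\mu,1)$ with $\lambda_1\in\mathfrak{o}$ and $\mu\in\mathfrak{o}^{2m-4+n_0}$; in these coordinates $q(\lambda):=\tfrac{1}{2}{}^t\lambda Q_{m-1}\lambda=\lambda_1+\tfrac{1}{2}{}^t\mu Q_{m-2}\mu$.

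Next, applying $n_{m-1}(x)$ and then the diagonal matrix $d:=\diag(\varpi^{-1},1_{2m-4+n_0},\varpi)$ to $\lambda$ yields the column vector whose entries are $\varpi^{-1}(\lambda_1-{}^tx Q_{m-2}\mu-\tfrac{1}{2}{}^tx Q_{m-2}x)$, $\mu+x$, and $\varpi$. Since the bottom entry already lies in $\varpi\mathfrak{o}$, the condition $d\,n_{m-1}(x)\lambda\in \varpi L_{m-1}$ reduces to the pair of conditions $x\equiv -\mu\pmod{\varpi}$ and $\ord\bigl(\lambda_1-{}^tx Q_{m-2}\mu-\tfrac{1}{2}{}^tx Q_{m-2}x\bigr)\geq 2$.

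Substituting $x=-\mu+\varpi y$ with $y\in\mathfrak{o}^{2m-4+n_0}$ and expanding, the cross terms cancel by the symmetry of $Q_{m-2}$, producing the key identity
\[
\lambda_1-{}^tx Q_{m-2}\mu-\tfrac{1}{2}{}^tx Q_{m-2}x \;=\; q(\lambda)-\tfrac{\varpi^2}{2}{}^ty Q_{m-2}y.
\]
Because $L_{m-1}$ is self-dual and $S_0$ is even, the sublattice $L_{m-2}$ is even and self-dual as well, so $\tfrac{1}{2}{}^ty Q_{m-2}y\in\mathfrak{o}$ for every $y\in\mathfrak{o}^{2m-4+n_0}$, and consequently the subtracted term always lies in $\varpi^2\mathfrak{o}$. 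Therefore a suitable $y$ (equivalently $x$) exists if and only if $\ord(q(\lambda))\geq 2$, which is exactly the negation of the reducedness condition, giving the stated equivalence. The only nontrivial step is the algebraic identity displayed above; the remainder is routine bookkeeping, so no serious obstacle is expected.
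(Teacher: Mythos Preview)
Your proof is correct and follows essentially the same approach as the paper's own argument: both write $\lambda$ in block form, compute the first entry of $d\,n_{m-1}(x)\lambda$ as $q(\lambda)-\tfrac{1}{2}{}^t(\mu+x)Q_{m-2}(\mu+x)$, observe that the middle coordinate forces $\mu+x\in\varpi L_{m-2}$, and use the evenness of $L_{m-2}$ to conclude that the subtracted term lies in $\varpi^2\mathfrak{o}$. Your explicit substitution $x=-\mu+\varpi y$ and the displayed identity make the cancellation slightly more transparent than in the paper, but the content is the same.
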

\begin{proof}
We write $\lambda=
\begin{pmatrix}
a\\
\alpha\\
1
\end{pmatrix}$ with $a\in{\mathfrak o}$, $\alpha\in L_{m-2}^{\sharp}$. 
Suppose first that $\lambda$ is reduced. Assume that the $\varpi$-adic order of $\frac{1}{2}{}^t\lambda Q_{m-1}\lambda$ is not less than two, and take $\beta\in F^{2m-4+n_0}$ so that $\alpha+\beta\in \varpi L_{m-2}^{\sharp}$. We then verify that
\[
\begin{pmatrix}
\varpi^{-1} & & \\
   & 1_{2m-4+n_0} & \\
   &             & \varpi
\end{pmatrix}n_{m-1}(\beta)\lambda=
\begin{pmatrix}
\varpi^{-1}(\frac{1}{2}{}^t\lambda Q_{m-2}\lambda-\frac{1}{2}{}^t(\alpha+\beta)Q_{m-2}(\alpha+\beta))\\
\alpha+\beta\\
\varpi
\end{pmatrix}\in\varpi L_{m-1}^{\sharp}.
\]
In fact, noting $\varpi L_{m-2}^{\sharp}=\varpi L_{m-2}$, we see that the $\varpi$-adic order of the first entry for the vector above is not less than one. This contradicts the assumption that $\lambda$ is reduced. 

Suppose next that the $\varpi$-adic order of $\frac{1}{2}{}^t\lambda Q_{m-1}\lambda$ is not greater than one. 
Then, with $\beta\in F^{2m-4+n_0}$ such that $\alpha+\beta\in\varpi L_{m-2}^{\sharp}$, the first entry of $\begin{pmatrix}
\varpi^{-1} & & \\
   & 1_{2m-4+n_0} & \\
   &             & \varpi
\end{pmatrix}n_{m-1}(\beta)\lambda$ has the $\varpi$-adic order is less than or equal to $0$. 
This suffices to prove that $\lambda$ is reduced since the proof is straightforward for $\beta$ such that $\alpha+\beta\not\in\varpi L_{m-2}=\varpi L_{m-2}^{\sharp}$.
\end{proof}

Let us now introduce $M_k:=
\left( \begin{smallmatrix}
\varpi^{-k} & & \\
    & 1_{2m-4+n_0} & \\
    &             & \varpi^k
\end{smallmatrix} \right)\in G_{m-1}$ for a non-negative integer $k$. This is useful to describe elements in $L_{m-1}^{\sharp}$ in terms of reduced ones under the assumption that $L_{m-1}$ is self-dual.
\begin{lem}\label{prim-gcd-red}
Let $L_{m-1}=L_{m-1}^{\sharp}$. Any $\lambda\in L_{m-1}^{\sharp}\setminus\{0\}$ can be written as $u\lambda=\varpi^{k+l}M_k^{-1}\eta$ with some non-negative integers $k$ and $l$, $u\in K_{m-1}$ and a reduced $\eta\in L_{m-1}^{\sharp}$.
\end{lem}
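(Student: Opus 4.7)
The plan is to first reduce $\lambda$ to a primitive vector, then use $K_{m-1}$-transformations to bring that primitive vector to the standard form ${}^t(c,0,1)$ with $c=\frac{1}{2}{}^t\mu Q_{m-1}\mu$, and finally extract from $c$ the largest even power of $\varpi$ it contains. Concretely, let $l\ge 0$ be the largest integer with $\lambda\in\varpi^{l}L_{m-1}$ and write $\lambda=\varpi^{l}\mu$ with $\mu\in L_{m-1}\setminus\varpi L_{m-1}$. Since $u\lambda=\varpi^{l}u\mu$ for any $u\in K_{m-1}$, it suffices to produce $u\in K_{m-1}$, $k\ge 0$, and a reduced $\eta$ satisfying $u\mu=\varpi^{k}M_{k}^{-1}\eta$; this immediately yields $u\lambda=\varpi^{k+l}M_{k}^{-1}\eta$.

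To process a primitive $\mu={}^t(a,\beta,b)$ I would first arrange for the last coordinate to be a unit and normalize it to $1$, via a three-subcase analysis. If $b\in{\mathfrak o}^{\times}$, the diagonal element $\diag(b,1,b^{-1})\in K_{m-1}$ does the job. If instead $a\in{\mathfrak o}^{\times}$ but $b\notin{\mathfrak o}^{\times}$, I first apply the Weyl element interchanging the first and last basis vectors, which lies in $K_{m-1}$ as it preserves both $Q_{m-1}$ and $L_{m-1}$, and reduce to the first subcase. If both $a,b\in\varpi{\mathfrak o}$, then $\beta$ must contain a unit entry by primitivity of $\mu$; the hypothesis $L_{m-1}=L_{m-1}^{\sharp}$ induces the same self-duality for $L_{m-2}$ through the hyperbolic-plane decomposition of $Q_{m-1}$, so $Q_{m-2}\beta$ is primitive in $L_{m-2}$ and there exists $x\in L_{m-2}$ with ${}^txQ_{m-2}\beta\in{\mathfrak o}^{\times}$. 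Applying $n_{m-1}(x)\in K_{m-1}$ then turns the first coordinate into $a-{}^txQ_{m-2}\beta-\frac{1}{2}{}^txQ_{m-2}x\cdot b$, which is a unit since both $a$ and $\frac{1}{2}{}^txQ_{m-2}x\cdot b$ lie in $\varpi{\mathfrak o}$, reducing us to the second subcase. Once $u\mu={}^t(a',\beta',1)$ with $\beta'\in L_{m-2}$, a final application of $n_{m-1}(-\beta')\in K_{m-1}$ clears the middle to yield ${}^t\left(a'+\frac{1}{2}{}^t\beta'Q_{m-2}\beta',\,0,\,1\right)={}^t(c,0,1)$ with $c=\frac{1}{2}{}^t\mu Q_{m-1}\mu$.

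Finally, under the assumption $c\neq 0$, I write $c=\varpi^{2k}c'$ with $k\ge 0$ taken maximal so that $\ord_{\varpi}(c')\le 1$, and set $\eta:={}^t(c',0,1)\in L_{m-1}$. Then $\eta$ is primitive with $\frac{1}{2}{}^t\eta Q_{m-1}\eta=c'$, so $\eta$ is reduced by Lemma~\ref{reduced}, and a direct check gives $\varpi^{k}M_{k}^{-1}\eta={}^t(\varpi^{2k}c',0,1)={}^t(c,0,1)$, matching $u\mu$ as required. The main obstacle I anticipate is the third subcase of the second step, where the self-duality hypothesis is genuinely needed (through its transfer to $L_{m-2}$) to supply the vector $x$ that forces a unit into the first coordinate; the other reductions are essentially routine matrix manipulation. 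The degenerate case $c=0$ of a primitive isotropic $\mu$ admits no reduced $\eta$ of matching norm, but such vectors do not arise as indices of the Fourier coefficients to which the lemma is applied in the Hecke-theoretic developments that follow.
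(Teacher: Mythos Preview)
Your approach is essentially the same as the paper's: extract the largest $\varpi$-power, arrange for the last coordinate to equal $1$, clear the middle block with $n_{m-1}(-\beta')$ to reach ${}^t(c,0,1)$, then peel off $\varpi^{2k}$ from $c$ and invoke Lemma~\ref{reduced}. The paper compresses your three-subcase reduction (getting a unit into the last slot) into the single sentence ``we can take $l\ge 0$ and $u\in K_{m-1}$ so that $u\lambda=\varpi^{l}\lambda_0$ with a primitive $\lambda_0$,'' using its nonstandard definition of \emph{primitive} (Definition~\ref{prim-red}(1): last entry equal to $1$); you have simply unpacked that sentence, and your use of self-duality of $L_{m-2}$ in the third subcase is exactly the mechanism that makes the paper's assertion true.

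Your explicit flag on the isotropic case $c=0$ is honest: the paper's own proof writes $u_0\lambda={}^t(\varpi^f t,0,1)$ with $t\in\mathfrak{o}^{\times}$, which likewise tacitly excludes $c=0$, and indeed ${}^t(0,0,1)$ is not reduced by Lemma~\ref{reduced}. As you note, the lemma is only applied to anisotropic $\lambda$ (coming from the positive-definite global lattice), so neither proof is harmed by this.
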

\begin{proof}
Without loss of generality we may assume that $\lambda$ is primitive since we can take $l\ge 0$ and $u\in K_{m-1}$ so that $u\lambda=\varpi^{l}\lambda_0$ with a primitive $\lambda_0$. Note that the case of $l=0$ means that $\lambda$ is primitive, up to the $K_{m-1}$-action on the left.

Let us take $x\in L_{m-2}^{\sharp}$ so that $n_{m-1}(x)\lambda=
\begin{pmatrix}
y\\
0_{2m-4+n_0}\\
1
\end{pmatrix}$ with $y\in{\mathfrak o}$. Note that $n_{m-1}(x)\in K_{m-1}$ for $x\in L_{m-2}^{\sharp}=L_{m-2}$. 
Thus there is $u_0\in K_{m-1},~t\in{\mathfrak o}^{\times}$ and a non-negative integer $f$ such that $u_0\lambda=
\begin{pmatrix}
\varpi^ft\\
0_{2m-4+n_0}\\
1
\end{pmatrix}$. We may now assume $\lambda=
\begin{pmatrix}
\varpi^ft\\
0_{2m-4+n_0}\\
1
\end{pmatrix}$. Put $k:=\left[\frac{f}{2}\right]$, then we have $
\lambda=\varpi^kM_k^{-1}
\begin{pmatrix}
a_0\\
0_{2m-4+n_0}\\
1
\end{pmatrix}$ 
with $a_0\in{\mathfrak o}$ whose $\varpi$-adic order is equal to $0$ or $1$. 
We are therefore done since 
$\begin{pmatrix}
a_0\\
0_{2m-4+n_0}\\
1
\end{pmatrix}$ is reduced in view of Lemma \ref{reduced}.
\end{proof}

We are now ready to introduce the notion of ``local Whittaker functions'' on $G_{m}$ in the sense of \cite[Section 7,~p47]{Su}. Though this does not come from the ``Whittaker model'' in the usual sense, it can be understood in terms of ``special Bessel models'' for unramified principal series  representations of $G_{m}$. 
We furthermore review the notion of the ``local Maass relation'' as in \cite[Section 7,~p52]{Su}, which leads to a nice reduction for the calculation of the Hecke eigenvalues. Sugano's local theory deals with the case of general maximal lattices and our review on this is given in such a general setting. However, what we need is his theory under the assumption ``$\partial=n_0=0$''~(see the notation below and \cite[p6]{Su}). 
We will show that the Fourier coefficients of $F_f$ belong to the space of the local Whittaker functions satisfying the local Maass relation. 

Let $\lambda\in L_{m-1}^{\sharp}$ be reduced and put $H_{\lambda}$ to be the stabilizer of $\lambda$ in $G_{m-1}$. We then introduce the space of the local Whittaker functions as follows: 
\[
\cW_{\lambda}^{\cF}:=\left\{W:G_{m}\rightarrow\C\left|~ \begin{matrix}
W\left(n_{m}(x)
\begin{pmatrix}
1 & & \\
 & h & \\
 &   & 1
\end{pmatrix}gk\right)=\Lambda_F({}^t\lambda(-Q_{m-1})x)W(g)\\
\forall(x,h,g,k)\in F^{2m-2+n_0}\times H_{\lambda}\times G_{m}\times K_{m}
\end{matrix}\right.\right\},
\]
where $\Lambda_F$ denotes the additive character of $F$ trivial on ${\mathfrak o}$ but non-trivial on ${\mathfrak p}^{-1}$, where ${\mathfrak p}:=\varpi{\mathfrak o}$.

For $W\in\cW_{\lambda}^{\cF}$, $l\in\Z$ and a non-negative integer $k$ we put
\[
W_{k,l}:=W\left(
\begin{pmatrix}
\varpi^{k+l} & & \\
 & M_k & \\
 &        & \varpi^{-(k+l)}
\end{pmatrix}\right),
\]
{for which note that $W_{k,l}$ is checked to be zero for a negative $l$}.  
We see that $W$ is determined by the $W_{k,l}$'s in view of {the Iwasawa decomposition of $G_m$} and the following coset decomposition of $G_{m-1}$~(cf.~\cite[Lemma 7.2]{Su}): 
\begin{lem}
We have
\[
G_{m-1}=H_{\lambda}K_{m-1}\sqcup\underset{k\geq 1}{\bigsqcup}H_{\lambda}M_kK_{m-1}^*,
\]
where
\[
K_{m-1}^*:=\{h\in K_{m-1}\mid (h-1)L_{m-1}^{\sharp}\subset L_{m-1}\}.
\]
\end{lem}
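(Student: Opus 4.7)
The plan is to recast the decomposition as a statement about orbits on the $G_{m-1}$-orbit of $\lambda$. Via the bijection
$$
H_\lambda\backslash G_{m-1} \;\simeq\; \mathcal{O}_\lambda:=G_{m-1}\cdot\lambda, \qquad H_\lambda g\longmapsto g^{-1}\lambda,
$$
the asserted decomposition of $G_{m-1}$ translates into
$$
\mathcal{O}_\lambda = K_{m-1}\cdot\lambda \;\sqcup\; \bigsqcup_{k\geq 1} K_{m-1}^{*}\cdot M_k^{-1}\lambda.
$$
I would distinguish these pieces by a single $K_{m-1}$-invariant integer-valued function on $\mathcal{O}_\lambda$ and use Lemma~\ref{prim-gcd-red} to match the orbits to the claimed representatives.

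First, for $\mu\in\mathcal{O}_\lambda$ I set
$$
s(\mu):=\min\{s\in\Z\mid \varpi^{s}\mu\in L_{m-1}^{\sharp}\},
$$
which is well defined and $K_{m-1}$-invariant since $K_{m-1}$ preserves $L_{m-1}^{\sharp}$. A direct calculation gives $s(M_k^{-1}\lambda)=k$ for every $k\geq 0$, and the identity $q_{Q_{m-1}}(\mu)=q_{Q_{m-1}}(\lambda)$ combined with $v_{\varpi}(q_{Q_{m-1}}(\lambda))\leq 1$ (Lemma~\ref{reduced}) forces $s(\mu)\geq 0$, so every value $k=0,1,2,\dots$ appears at least once and is sharply attained.

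Next, given $\mu\in\mathcal{O}_\lambda$ with $s(\mu)=k$, I would apply Lemma~\ref{prim-gcd-red} to the vector $\varpi^{k}\mu\in L_{m-1}^{\sharp}\setminus\varpi L_{m-1}^{\sharp}$ to obtain $u\in K_{m-1}$, integers $k',l\geq 0$, and a reduced $\eta\in L_{m-1}^{\sharp}$ with $u\varpi^{k}\mu=\varpi^{k'+l}M_{k'}^{-1}\eta$. The non-divisibility of $\varpi^{k}\mu$ by $\varpi$ forces $l=0$, and the resulting norm identity $q_{Q_{m-1}}(\lambda)=\varpi^{2(k'-k)}q_{Q_{m-1}}(\eta)$ together with the reducedness bound $v_{\varpi}(q_{Q_{m-1}}(\eta))\leq 1$ forces $k'=k$ and $q_{Q_{m-1}}(\eta)=q_{Q_{m-1}}(\lambda)$. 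A Witt-type transitivity of $K_{m-1}$ on reduced vectors of a fixed norm (a standard unramified orthogonal-group fact) then yields $\eta\in K_{m-1}\cdot\lambda$, and hence $\mu\in K_{m-1}\cdot M_k^{-1}\lambda$.

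The hard part will be showing, for $k\geq 1$, the equality $K_{m-1}\cdot M_k^{-1}\lambda=K_{m-1}^{*}\cdot M_k^{-1}\lambda$, which is equivalent to $K_{m-1}=K_{m-1}^{*}\cdot(M_kH_\lambda M_k^{-1}\cap K_{m-1})$. My plan is to write an arbitrary $u\in K_{m-1}$ in block form adapted to the splitting defining $M_k$ and then modify it by an element of the stabilizer of $M_k^{-1}\lambda$ so that the residual action on $L_{m-1}^{\sharp}/L_{m-1}$ becomes trivial; the point is that conjugation by $M_k$ with $k\geq 1$ renders the relevant off-diagonal blocks divisible enough to make this adjustment possible. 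In the self-dual case $L_{m-1}^{\sharp}=L_{m-1}$ one has $K_{m-1}^{*}=K_{m-1}$ and this step is automatic, but the general case requires a careful bookkeeping of the dual lattice. Once this is in place, disjointness of the listed pieces is immediate from the $K_{m-1}$-invariance of $s(\cdot)$ from the first step, completing the proof.
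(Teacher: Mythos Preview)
The paper does not give its own proof of this lemma: it is quoted directly from Sugano \cite[Lemma~7.2]{Su}, and the text simply cites that reference. So there is no argument in the paper to compare your proposal against.

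Evaluating your proposal on its own merits: the orbit reformulation $H_\lambda\backslash G_{m-1}\simeq G_{m-1}\cdot\lambda$ and the invariant $s(\mu)$ are natural and sound, and in the self-dual case (where $K_{m-1}^{*}=K_{m-1}$) your sketch is close to a complete argument. However, the lemma is stated in Sugano's general setting, whereas both Lemma~\ref{reduced} and Lemma~\ref{prim-gcd-red}, which you invoke, carry the explicit hypothesis $L_{m-1}=L_{m-1}^{\sharp}$. You have not indicated how you would replace those inputs when $L_{m-1}\subsetneq L_{m-1}^{\sharp}$, and this is precisely the regime where the distinction between $K_{m-1}$ and $K_{m-1}^{*}$ matters. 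In addition, two steps are asserted rather than proved: the ``Witt-type transitivity of $K_{m-1}$ on reduced vectors of a fixed norm'' (this is true in the self-dual case via the normal form in the proof of Lemma~\ref{prim-gcd-red}, but in general needs an argument tailored to the definition of ``reduced'' in Definition~\ref{prim-red}), and the ``hard part'' $K_{m-1}\cdot M_k^{-1}\lambda=K_{m-1}^{*}\cdot M_k^{-1}\lambda$ for $k\geq 1$, which you only outline. Since the paper defers entirely to \cite{Su} here, if you want a self-contained proof you should either restrict to the self-dual case (which is all that is used later in the paper) or consult Sugano's original argument for the general lattice case.
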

We say that $W\in\cW_{\lambda}^{\cF}$ satisfies the local Maass relation if 
\begin{equation}\label{local-Maass-rel}
W_{k,l}-W_{k+1,l-1}=q^{-l}W_{k,0}\quad\forall k\ge 0,~\forall l\ge 0,
\end{equation}
which is equivalent to
\begin{equation}\label{local-Maass-rel-2}
W_{k,l}=\sum_{i=0}^{l}q^{-i}W_{k+l-i,0}.
\end{equation}
By $\cW_{\lambda}^{\cM}$ we denote the subspace of $\cW_{\lambda}^{\cF}$ consisting of those satisfying the local Maass relation.

We now review an explicit structure of $\cW_{\lambda}^{\cF}$ and $\cW_{\lambda}^{\cM}$ as $\cH_m$-modules. 
For that purpose let $L'_m:=\{x\in L_m^{\sharp}\mid \frac{1}{2}{}^tx Q_mx\in{\mathfrak p}^{-1}\}$ and denote by $\partial$ the dimension of $L'_m/L_m$ as a vector space over the residual field ${\mathfrak o}/{\mathfrak p}$ of $F$~(cf.~\cite[p6]{Su}). We furthermore put $q:=\sharp({\mathfrak o}/{\mathfrak p})$. For a non-negative integer $m$ we put
\begin{equation}\label{important-notation}
f_{m,j}:=\frac{q^{j-1}(q^{m-j+1}-1)(q^{m-j+n_0}+q^{\partial})}{q^j-1}\quad(\forall j\in\Z\setminus\{0\}).
\end{equation}
We note that this is a modification of what has been introduced at \cite[(7.11)]{Su}~(for this see also Remark \ref{Rem-Suganoformula} (2)). For a positive integers $m, r$, set $R_m^{(r)}
:=K_m/(K_m \cap c_m^{(r)}K_m(c_m^{(r)})^{-1})$, and let $|R_m^{(r)}|$ denote the cardinality of $R_m^{(r)}$. 
We have
\begin{equation}\label{important-notation-2}
|R_m^{(r)}|:=
\begin{cases}
\prod_{j=1}^rf_{m,j}&(1\le r\le m)\\
1&(r=0).
\end{cases}
\end{equation}

Without difficulty it is verified that $\cW_{\lambda}^{\cF}$ is stable under the action of $\cH_{m}$. In \cite[Corollary 7.5,~Corollary 7.8]{Su} the aforementioned explicit $\cH_{m}$-module structure of  $\cW_{\lambda}^{\cF}$ and $\cW_{\lambda}^{\cM}$ is given~(for this see Remark \ref{Rem-Suganoformula} (2)). We state it with the notation $f_{m,j}$ as follows:
\begin{prop}\label{Local-Whitt-Maass-sp}
\begin{enumerate}
\item Suppose that $m\ge 3$. On $\cW_{\lambda}^{\cF}$ the Hecke operators $C_{m}^{(r)}$ for $r\ge 3$ act as
\[
C_{m}^{(r)}=|R_{m-2}^{(r-2)}|\left(C_{m}^{(2)}-\frac{q^{r-2}-1}{q^{r-1}-1}\cdot f_{m-1,1}\cdot C_{m}^{(1)}+\frac{q^{r-2}-1}{q(q^r-1)}f_{m-1,1}f_{m+1,2}\right).
\]
\item The subspace $\cW_{\lambda}^{\cM}$ of $\cW_{\lambda}^{\cF}$ is stable under the action by $\cH_{m}$. In addition to the above formula, for $m\ge 2$, $C_{m}^{(2)}$ coincides with
\[
\begin{cases}
f_{m-1,1}C_{m}^{(1)}+q^4f_{m-2,1}f_{m-2,2}{-}q^3f_{m-2,1}^2-q^2(q^{2m-4+{n_0}}-(q-2)q^{\partial})f_{m-2,1}\\
+(q-1)q^{\partial}f_{m-1,1}-q(q^{2m-4+n_0}
+q^{\partial})^2&(m\ge 3),\\
f_{1,1}(C_2^{(1)}-\displaystyle\frac{q-1}{q^2-1}f_{2,1})&(m=2).
\end{cases}
\]
as Hecke operators acting on $\cW_{\lambda}^{\cM}$. 
\end{enumerate}
In particular, the Hecke eigenvalues of an eigenvector in $\cW_{\lambda}^{\cM}$ with respect to the $\cH_{m}$-action is determined by the eigenvalue of $C_{m}^{(1)}$.
\end{prop}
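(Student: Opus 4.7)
The plan is to compute the action of each generator $C_m^{(r)}$ on a function $W \in \cW_\lambda^\cF$ by writing down explicit right coset representatives for $K_m \backslash K_m c_m^{(r)} K_m$, and then reduce everything to identities among the values $W_{k,l}$ (which, together with the Iwasawa decomposition and the coset decomposition of $G_{m-1}$ in the lemma preceding the proposition, determine $W$ completely). Because $W$ transforms by the character $\Lambda_F({}^t\lambda (-Q_{m-1})x)$ under $n_m(x)$ and is invariant under $H_\lambda$ on the left and $K_m$ on the right, each coset sum collapses to a linear combination $\sum c(k',l') W_{k',l'}$ where the coefficients $c(k',l')$ are obtained by summing the additive character $\Lambda_F$ over finite-field-type quotients; these character sums produce precisely the combinatorial constants $f_{m,j}$ and $|R_m^{(r)}|$ defined in \eqref{important-notation} and \eqref{important-notation-2}.

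For part (1), I would carry out the coset-representative computation for general $r$ and organize the representatives by the ``depth-two'' projection of $c_m^{(r)}$. The key structural observation is that the fiber of this projection over each coset contributing to $C_m^{(2)}$ has size $|R_{m-2}^{(r-2)}|$, which explains the leading factor. Matching the contributions of representatives that also appear in $K_m c_m^{(2)} K_m$, $K_m c_m^{(1)} K_m$, and $K_m$ itself, and comparing coefficients (using the explicit formulas for character sums) yields the displayed identity
\[
C_m^{(r)} = |R_{m-2}^{(r-2)}|\Bigl(C_m^{(2)} - \tfrac{q^{r-2}-1}{q^{r-1}-1}\,f_{m-1,1}\,C_m^{(1)} + \tfrac{q^{r-2}-1}{q(q^r-1)}\,f_{m-1,1}f_{m+1,2}\Bigr)
\]
as operators on $\cW_\lambda^\cF$.

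For part (2), I would first establish stability of $\cW_\lambda^\cM$ under $\cH_m$. Using the explicit formulas for $(C_m^{(r)}W)_{k,l}$ from the previous step, I would substitute the Maass relation \eqref{local-Maass-rel-2} on the right-hand side and verify the identity $(C_m^{(r)}W)_{k,l} - (C_m^{(r)}W)_{k+1,l-1} = q^{-l} (C_m^{(r)}W)_{k,0}$ by rearranging geometric-type sums in $q^{-i}$. Once stability is known, the relation expressing $C_m^{(2)}$ in terms of $C_m^{(1)}$ is derived by evaluating both operators on $W_{0,0}$ via their coset decompositions, then applying the Maass relation repeatedly to eliminate every intermediate $W_{k,l}$ with $l\ge1$; the resulting linear relation among $W_{k,0}$ for $k=0,1,2$ forces the claimed polynomial identity. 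The case $m=2$ is a direct lower-rank computation since $\cH_2$ is generated by $C_2^{(1)}$ and $C_2^{(2)}$ alone. The final claim that the $C_m^{(1)}$-eigenvalue determines all Hecke eigenvalues is then immediate, since combining parts (1) and (2) expresses every generator as a polynomial in $C_m^{(1)}$ on $\cW_\lambda^\cM$.

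The main obstacle will be the combinatorial bookkeeping in converting the character sums over $N_m/(N_m\cap K_m)$-type quotients into the closed forms $f_{m,j}$, particularly keeping track of the reduced-versus-non-reduced dichotomy (via Lemma \ref{reduced} and Lemma \ref{prim-gcd-red}) when a representative acts on $\lambda$. The rest is essentially tedious but mechanical verification following Sugano's framework; the only genuine algebraic input beyond that is the observation that the operator identity in (1) specializes correctly at the boundary $r=2$, which pins down the coefficients uniquely.
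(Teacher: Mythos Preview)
Your plan is essentially the same route as the paper's: the paper does not give a self-contained argument here but cites Sugano \cite[Corollary 7.5, Corollary 7.8]{Su} and says ``This is checked by a direct calculation,'' and Remark~\ref{Rem-Suganoformula}~(1) makes explicit that the underlying proof rests on Sugano's general recurrence formula \cite[Theorem 7.4]{Su} (the special case $\partial=n_0=0$ of which appears later as Proposition~\ref{Hecke-recurrence}). In other words, the paper takes the explicit expression of $(C_m^{(r)}*W)_{k,l}$ in terms of neighboring $W_{k',l'}$'s as already established by Sugano, and the ``direct calculation'' consists of substituting these expressions and verifying the displayed operator identities term by term. Your proposal to derive that recurrence from an explicit right-coset decomposition of $K_m c_m^{(r)} K_m$ and then match coefficients is exactly how Sugano obtains his Theorem~7.4 in the first place, so you are reproducing one more layer than the paper bothers to.

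Two small cautions. First, the step ``evaluating both operators on $W_{0,0}$'' is not enough to establish an operator identity on $\cW_\lambda^{\cM}$: you must verify $(C_m^{(2)}W)_{k,l} = f_{m-1,1}(C_m^{(1)}W)_{k,l} + (\text{const})\,W_{k,l}$ for \emph{all} $(k,l)$, not just $(0,0)$; in practice one checks it for all $(k,0)$ after invoking stability and the Maass relation. Second, Lemmas~\ref{reduced} and~\ref{prim-gcd-red} are stated only under the self-duality hypothesis $L_{m-1}=L_{m-1}^\sharp$, whereas the proposition is asserted for general $(\partial,n_0)$; those lemmas are not actually needed here (they enter later, in the application to $F_f$), and the relevant combinatorics for general maximal lattices is already packaged in Sugano's coset decomposition \cite[Lemma 7.1]{Su}.
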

This is checked by a direct calculation. Regarding the formula in {the second assertion} for the case of $m=2$ we should note that $f_{0,1}$ is defined to be $0$, from which we deduce that the formula for $m=2$ is compatible with that for $m\ge 3$.

We are now able to point out that the description of the $\cH_m$-module structure of $\cW_{\lambda}^{\cM}$ above can be simplified  as follows:
\begin{prop}\label{Simple-Heckemodule}
Suppose that $m\ge 2$. 
As Hecke operators on $\cW_{\lambda}^{\cM}$, we have the coincidence 
\[
C_m^{(r)}=|R_{m-1}^{(r-1)}|\left(C_m^{(1)}-\frac{q^{r-1}-1}{q^r-1}f_{m,1}\right)
\]
for $2\le r\le m$.
\end{prop}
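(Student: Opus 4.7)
The statement is a purely algebraic identity in the Hecke algebra acting on $\cW_\lambda^{\cM}$, so the plan is to deduce it from Proposition \ref{Local-Whitt-Maass-sp} by substituting the second formula (for $C_m^{(2)}$) into the first (for $C_m^{(r)}$, $r\geq 3$) and simplifying. The cases $r=2$ and $r\geq 3$ are handled separately.

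\textbf{The case $r=2$.} Here I apply Proposition \ref{Local-Whitt-Maass-sp}(2) directly. Since $|R_{m-1}^{(1)}|=f_{m-1,1}$, the coefficient of $C_m^{(1)}$ agrees with that in the target formula, so one only needs to verify that the stated scalar part equals $-\frac{q-1}{q^2-1}f_{m-1,1}f_{m,1}$. For $m=2$ this is immediate from the formula displayed in Proposition \ref{Local-Whitt-Maass-sp}(2), and for $m\geq 3$ it is an elementary expansion using the closed form of $f_{m,j}$ in \eqref{important-notation}.

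\textbf{The case $r\geq 3$.} I substitute the expression for $C_m^{(2)}$ from Proposition \ref{Local-Whitt-Maass-sp}(2) into the formula of Proposition \ref{Local-Whitt-Maass-sp}(1), producing a relation of the form $C_m^{(r)}=\alpha\,C_m^{(1)}+\beta$, where
\[
\alpha \;=\; |R_{m-2}^{(r-2)}|\left(f_{m-1,1}-\frac{q^{r-2}-1}{q^{r-1}-1}f_{m-1,1}\right) \;=\; |R_{m-2}^{(r-2)}|\cdot f_{m-1,1}\cdot\frac{q^{r-2}(q-1)}{q^{r-1}-1}.
\]
To identify $\alpha$ with $|R_{m-1}^{(r-1)}|$, I use the telescoping ratio $\frac{f_{m-1,j+1}}{f_{m-2,j}}=\frac{q(q^j-1)}{q^{j+1}-1}$, which follows directly from \eqref{important-notation}; this gives
\[
\frac{|R_{m-1}^{(r-1)}|}{|R_{m-2}^{(r-2)}|\cdot f_{m-1,1}} \;=\; \prod_{j=1}^{r-2}\frac{f_{m-1,j+1}}{f_{m-2,j}} \;=\; \frac{q^{r-2}(q-1)}{q^{r-1}-1},
\]
so $\alpha=|R_{m-1}^{(r-1)}|$, as required.

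\textbf{Main obstacle.} The delicate part is verifying $\beta=-|R_{m-1}^{(r-1)}|\cdot\frac{q^{r-1}-1}{q^r-1}f_{m,1}$. The scalar $\beta$ collects the five constant terms from the formula for $C_m^{(2)}$ in Proposition \ref{Local-Whitt-Maass-sp}(2) multiplied by $|R_{m-2}^{(r-2)}|$, together with the contribution $|R_{m-2}^{(r-2)}|\cdot\frac{q^{r-2}-1}{q(q^r-1)}f_{m-1,1}f_{m+1,2}$ coming from Proposition \ref{Local-Whitt-Maass-sp}(1). After substituting the closed forms from \eqref{important-notation}, one pulls out the common factor $\frac{|R_{m-1}^{(r-1)}|}{q^r-1}$ using the same telescoping identity as above, and the verification reduces to a polynomial identity in $q$, $q^{m}$, $q^{n_0}$ and $q^{\partial}$, which is then checked by direct expansion. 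I expect this bookkeeping to be the main obstacle of the proof, though no conceptual difficulty is anticipated beyond the careful tracking of the algebraic identity.
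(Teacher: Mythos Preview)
Your approach is correct and essentially the same as the paper's: both start from Proposition~\ref{Local-Whitt-Maass-sp}, treat $r=2$ by direct verification from part~(2), and handle $r\ge 3$ by substitution into part~(1), using the ratio identity $|R_{m-1}^{(r-1)}|=f_{m-1,1}\cdot q^{r-2}\,\frac{q-1}{q^{r-1}-1}\,|R_{m-2}^{(r-2)}|$ (your telescoping computation) to match the coefficient of $C_m^{(1)}$.

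The one organizational difference worth noting is that the paper feeds the \emph{already simplified} formula $C_m^{(2)}=|R_{m-1}^{(1)}|\bigl(C_m^{(1)}-\frac{q-1}{q^2-1}f_{m,1}\bigr)$ back into Proposition~\ref{Local-Whitt-Maass-sp}(1), whereas you substitute the raw five-term expression from Proposition~\ref{Local-Whitt-Maass-sp}(2). Since you have just established the $r=2$ case, you are entitled to use its clean form; doing so collapses your scalar $\beta$ to the single term $|R_{m-2}^{(r-2)}|\bigl(-f_{m-1,1}\frac{q-1}{q^2-1}f_{m,1}+\frac{q^{r-2}-1}{q(q^r-1)}f_{m-1,1}f_{m+1,2}\bigr)$, and the remaining identity becomes a one-line check rather than the multi-term polynomial expansion you anticipate as the ``main obstacle''. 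This is exactly how the paper avoids the heavy bookkeeping: it isolates the constant-term identity once (in its technical lemma rewriting $q^2f_{m-1,1}f_{m-1,2}-qf_{m-1,1}^2$) at the $r=2$ stage, so that the $r\ge 3$ step is essentially free.
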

\begin{proof}
The case of $m=2$ is already shown in Proposition \ref{Local-Whitt-Maass-sp}, 2, for which note that $|R_1^{(1)}|=f_{1,1}$. 
The proof for the case of $m\ge 3$ starts with the following lemma:
\begin{lem}
If we assume that $C_m^{(2)}=|R_{m-1}^{(1)}|\left(C_m^{(1)}-\displaystyle\frac{q-1}{q^2-1}f_{m,1}\right)$ holds, we then have the formulas for $C_m^{(r)}$ with $r\ge 3$. 
\end{lem}
\begin{proof}
Insert the assumed formula for $C_m^{(2)}$ into those in Proposition \ref{Local-Whitt-Maass-sp}, 1. Furthermore note that 
\[
|R_{m-1}^{(r-1)}|=f_{m-1,1}\times p^{r-2}\cdot\frac{q-1}{q^{r-1}-1}\times|R_{m-2}^{(r-2)}|
\]
for $r\ge 3$. Then the lemma is settled by a direct calculation.
\end{proof}

What is remaining now is to deduce the formula for $C_m^{(2)}$ from that in Proposition \ref{Local-Whitt-Maass-sp}, 2. This needs the following technical lemma:
\begin{lem}
We have
\begin{align*}
& q^2f_{m-1,1}f_{m-1,2}-qf_{m-1,1}^2 \\
& \qquad =q^4f_{m-2,1}f_{m-2,2}-q^3f_{m-2,1}^2-q^2(q^{2m-4+n_0}-(q-2)q^{\partial})f_{m-2,1}-q(q^{2m-4+n_0}+q^{\partial})^2.
\end{align*}
\end{lem}
\begin{proof}
To show this we need
\begin{align*}
f_{m-1,1}&=qf_{m-2,1}+(q^{2m-4+n_0}+q^{\partial}),\\
f_{m-1,2}&=qf_{m-2,2}+\frac{q}{q+1}(q^{2m-6+n_0}+q^{\partial})=\frac{1}{q+1}(f_{m-1,1}-(p^{2m-4+n_0}+q^{\partial})).
\end{align*}
This verifies the coincidence of both sides.
\end{proof}

We therefore see that the formula in Proposition \ref{Local-Whitt-Maass-sp}, 2 implies 
\[
C_m^{(2)}=|R_{m-1}^{(1)}|(C_m^{(1)}+(q-1)+q^2f_{m-1,2}-qf_{m-1,1}),
\] 
for which note that $f_{m-1,1}=|R_{m-1}^{(1)}|$. 
This is verified to coincide with the desired formula for $C_m^{(2)}$ by a direct computation. 
As a result we have completed the proof of Proposition \ref{Simple-Heckemodule}.
\end{proof}

For the application to the action of the Hecke operators on $F_f$  we assume that $m=4n+1$ for $n\ge 1$, $F=\Q_p$, $q=p$ and $\partial=n_0=0$. 
We describe the actions of the Hecke operators $\{C_{4n+1}^{(r)}\mid 1\le r\le 4n+1\}$ on $W\in\cW_{\lambda}^{\cF}$ in terms of a recurrence relation of the $W_{k,l}$'s~(see \cite[Theorem 7.4]{Su}).
\begin {prop}\label{Hecke-recurrence}
Let $C_{4n+1}^{(r)}*W\in\cW_{\lambda}^{\cF}$ denote the action of $C_{4n+1}^{(r)}$ on $W\in\cW_{\lambda}^{\cF}$. For two non-negative integer $k,l$ we have the following formula:
\begin{align*}
(C_{4n+1}^{(r)}*W)_{k,l}=&|R_{4n-1}^{(r-2)}|\{p^{8n}(W_{k-1,l+2}+u_{r-1}W_{k,l+1}+p^{8n-2}W_{k+1,l})\\
&+(p(p^{r-2}-1)u_{r-2}+p^rf_{4n-1,r-1}u_r)W_{k,l}\\
&+pu_{r-1}W_{k-1,l+1}+p^{8n-1}u_{r-1}W_{k+1,l-1}\\
&+W_{k-1,l}+u_{r-1}W_{k,l-1}+p^{8n-2}W_{k+1,l-2}-\delta(l=0)p^{8n-1}W_{k,0}\\
&+\delta(k=0)\{p^{4n-1}\beta_{\lambda}(p^{8n}(W_{0,l+1}-W_{1,l})+pu_{r-1}(W_{0,l}-W_{1,l-1})\\
&+(W_{0,l-1}-W_{1,l-2}))+p^{8n}W_{0,l+1}+pu_{r-1}W_{0,l}+W_{0,l-1}\}\\
&+\delta(k=l=0)p^{4n}\beta_{\lambda}W_{0,0}\},
\end{align*}
where we put $u_r:=p^rf_{4n-1,r}+p^{r-1}-1$ and
\[
\beta_{\lambda}:=
\begin{cases}
0&(\text{$p$-adic order of $\frac{1}{2}{}^t\lambda J_{4n}\lambda=1$})\\
-1&(\text{$p$-adic order of $\frac{1}{2}{}^t\lambda J_{4n}\lambda=0$})
\end{cases}.
\] 
Here the formula for $r=1$ needs the following interpretation
\[
|R_{4n-1}^{(-1)}|=0,~|R_{4n-1}^{(-1)}|f_{4n-1,0}=|R_{4n-1}^{(-1)}|u_0=1.
\] 
We furthermore have $W_{k',l'}=0$ for negative $k',l'$.
\end{prop}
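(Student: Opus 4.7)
The plan is to specialize Sugano's Theorem 7.4 of \cite{Su} to our setting ($m=4n+1$, $F=\Q_p$, $q=p$, $\partial=n_0=0$), so the proof proceeds by a direct computation of the Hecke action on the model diagonal elements $d_{k,l}:=\diag(\varpi^{k+l},M_k,\varpi^{-(k+l)})$. First, I would fix explicit representatives $\{x_i\}$ for the double coset decomposition $K_m c_m^{(r)} K_m = \bigsqcup_i x_i K_m$. Since $C_m^{(r)}$ is unramified of type $r$, these representatives can be chosen in the Iwasawa form
\[
x_i = n_m(u_i) \cdot \diag(a_i, h_i, {}^t a_i^{-1}) \cdot k_i,
\]
with $a_i \in \mathrm{GL}_r(\Q_p)$ running through representatives for $\mathrm{GL}_r(\Z_p)\backslash\mathrm{GL}_r(\Q_p)/\mathrm{GL}_r(\Z_p)$ of Smith type $\diag(\varpi^{e_1},\dots,\varpi^{e_r})$ with $e_i \in \{-1,0,1\}$, multiplied by the obvious unipotent and Levi pieces inside $G_m$. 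The counts of such representatives for each Smith type are standard orthogonal Hecke computations and are indexed by the integers $|R_{m-1}^{(r-1)}|$, $|R_{m-2}^{(r-2)}|$, etc., already introduced in \eqref{important-notation-2}.

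Second, I would compute $(C_{4n+1}^{(r)}*W)(d_{k,l}) = \sum_i W(d_{k,l} x_i)$. For each representative $x_i$, I would move unipotent parts to the left of $d_{k,l}$ by conjugation (multiplying them by appropriate powers of $\varpi$), then absorb factors sitting in the stabilizer $H_\lambda$ of the reduced vector $\lambda$ on the left, and finally push $K_m$-parts to the right. The remaining diagonal content then takes the form $d_{k',l'}$ for various shifts $(k',l')$ in $\{(k-1,l+2),(k,l+1),(k+1,l),(k,l),(k-1,l+1),(k+1,l-1),(k-1,l),(k,l-1),(k+1,l-2),(k,0),(0,l+1),\dots\}$, which matches exactly the list of terms appearing in the statement.

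The technical heart of the argument — and the main obstacle — is the careful bookkeeping around the boundary cases $k=0$ and $l=0$. When $k=0$, the middle block $M_k$ is trivial, so additional representatives whose unipotent parts would normally be absorbed into $H_\lambda$ now interact with the Levi block $\diag(1,\varpi^{-1}\text{-shift},1)$ via the $\Lambda_F$-transformation law, producing the extra terms multiplied by $p^{4n-1}\beta_\lambda$. The sign $\beta_\lambda$ records whether the primitive reduced vector $\lambda$ satisfies $\mathrm{ord}_p(\tfrac{1}{2}{}^t\lambda J_{4n}\lambda)=0$ or $1$; by Lemma \ref{reduced} these are the only two possibilities for reduced $\lambda$ in the self-dual case $\partial=n_0=0$. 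When $l=0$, certain representatives that would contribute $W_{k,-1}$ vanish by the convention $W_{k',l'}=0$ for $l'<0$, producing the $-\delta(l=0)p^{8n-1}W_{k,0}$ correction. The combined $\delta(k=l=0)p^{4n}\beta_\lambda W_{0,0}$ term arises from the case where both boundary simplifications occur simultaneously.

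Third, I would verify the coefficients by comparison. The polynomial $u_r = p^r f_{4n-1,r} + p^{r-1} - 1$ is essentially the orthogonal analogue of a Hall–Littlewood coefficient, and its appearance is forced by the count of representatives whose $\mathrm{GL}_r$-part has exactly one $\varpi^{-1}$ factor. Rather than redo the full combinatorics, I would specialize \cite[Theorem 7.4]{Su} by substituting $q=p$, $n_0=0$, $\partial=0$, and checking that Sugano's quantities $\mu_r$, $\nu_r$ in loc.\ cit.\ reduce to our $u_r$, $f_{4n-1,r-1}$, $|R_{4n-1}^{(r-2)}|$ under this specialization, and that the convention $|R_{4n-1}^{(-1)}|=0$, $|R_{4n-1}^{(-1)}|f_{4n-1,0}=|R_{4n-1}^{(-1)}|u_0=1$ correctly produces the $r=1$ formula through the appropriate limits. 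A sanity check on $r=1$ (where all $|R_{4n-1}^{(-1)}|$-prefactored terms disappear except the exceptional ones) and $r=2$ (which must be compatible with Proposition \ref{Local-Whitt-Maass-sp} when restricted to $\cW_\lambda^{\cM}$) would confirm the formula.
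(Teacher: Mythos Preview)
Your proposal is correct and aligns with the paper's approach: the paper does not give an independent proof of this proposition but simply records it as the specialization of \cite[Theorem~7.4]{Su} to the parameters $m=4n+1$, $q=p$, $\partial=n_0=0$, noting in Remark~\ref{Rem-Suganoformula} that Sugano's additional invariant $\rho_{\lambda}$ vanishes here. Your outline of the underlying coset computation is a reasonable expansion of what Sugano does, but for the purposes of this paper the citation and specialization you describe in your third step is exactly what is needed.
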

\begin{rem}\label{Rem-Suganoformula}
For this proposition we have two remarks on Sugano's formula \cite[Theorem 7.4]{Su} in the general case etc.\\
(1)~The formula need the notation ``$\rho_{\lambda}$'' as well as ``$\beta_{\lambda}$''~(for their definitions see \cite[Proposition 7.3,~(2.17)]{Su}). For {the proposition} we see $\rho_{\lambda}=0$. 
We further remark that the proof of Proposition \ref{Local-Whitt-Maass-sp} is based on Sugano's formula in the general case.\\
(2)~The formula for the case of ``$r=\nu+2$'' need the interpretation $f_{\nu,\nu+1}=0$, which is not referred to in \cite{Su}. This is a reason for our modified definition of $f_{m,j}$~(see also the remark just after Proposition \ref{Local-Whitt-Maass-sp}). In addition, we remark that ``$3\le r\le\nu+1$'' should be replaced by ``$3\le r\le\nu+2$'' in \cite[Corollary 7.5]{Su}. 
\end{rem}
\subsection{Hecke theory for our lift $F_f$}
We are now in a position to state our result on the Hecke theory for $F_f$. 
\begin{thm}\label{MainThm-first}
Suppose that $f$ is a Hecke eigenform and let $\lambda_p$ be the Hecke eigenvalue of $f$ at $p<\infty$.\\
(1)~$F_f$ is a Hecke eigenform.\\
(2)~Let $\mu_i$ be the Hecke eigenvalue with respect to the Hecke operator $C_{4n+1}^{(i)}$ for $1\le i\le 4n+1$. We have
\begin{align*}
\mu_1&=p^{4n}(\lambda_p^2-2)+pf_{4n,1}=p^{4n}(\lambda_p^2+p^{4n-1}+\cdots+p+p^{-1}+\cdots+p^{-(4n-1)}),\\
\mu_i=&|R_{4n}^{(i-1)}|\left(\mu_1-\frac{p^{i-1}-1}{p^i-1}f_{4n+1,1}\right),~(2\le i\le 4n+1).
\end{align*}
\end{thm}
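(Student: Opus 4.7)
My approach is to translate the Hecke action on $F_f$ into Sugano's local theory from Section \ref{Sugano-local}. Fix a finite prime $p$. Since $L_0$ is even unimodular, for every $p$ the pair $(L_p,q_S)$ is hyperbolic, so Sugano's setup applies with $m=4n+1$, $n_0=0$, $\partial=0$, $F=\Q_p$ and $q=p$. For each reduced $\lambda\in L_{4n}^\sharp=L_{4n}$ (Definition \ref{prim-red}) I define the local function $W_\lambda:\cG(\Q_p)\to\C$ as the restriction to $\cG(\Q_p)$ of the global $A_\lambda$ from \eqref{Def-cond-1}--\eqref{Def-cond-3}. The theorem will follow from four steps: (i)~$W_\lambda\in\cW_\lambda^{\cF}$; (ii)~$W_\lambda\in\cW_\lambda^{\cM}$; (iii)~Proposition \ref{Simple-Heckemodule} reduces every Hecke eigenvalue to $\mu_1$; (iv)~$\mu_1$ is computed from Proposition \ref{Hecke-recurrence} specialized at $r=1$.

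Step (i) is essentially formal: the transformation properties \eqref{Def-cond-1}--\eqref{Def-cond-3}, together with Lemma \ref{Well-def-smfct}, give the required behavior under $n_{4n+1}(x)\cdot \diag(1,h,1)\cdot k$ for $(x,h,k)\in\Q_p^{8n}\times H_\lambda\times K_{4n+1}$, with the additive character $\Lambda_F({}^t\lambda(-Q_{4n})x)$ arising as the local component of $\Lambda({}^t\lambda Sx)$. Step (ii) is the substantive step on the Whittaker side: using Lemma \ref{prim-gcd-red} together with \eqref{Def-cond-2}, the quantity $W_{k,l}$ is equal to an explicit power of $p$ times the Fourier coefficient $A$ of a suitable vector whose gcd at $p$ is controlled by $k$ and $l$. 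The local Maass relation \eqref{local-Maass-rel-2} then reduces to the classical Maass-type identity
\begin{equation*}
A(p^{a+1}\lambda_0)=p^{4n-1}A(p^{a}\lambda_0)+p^{a+1}|\lambda_0|_S\, c(-p^{2(a+1)}|\lambda_0|_S^{2}),
\end{equation*}
which is a direct consequence of the divisor sum formula \eqref{Fouriercoeff-lifting} applied to $\lambda=p^a\lambda_0$ and $\lambda=p^{a+1}\lambda_0$ for a primitive $\lambda_0$.

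Once $W_\lambda\in\cW_\lambda^{\cM}$ is established, step (iii) is immediate from Proposition \ref{Simple-Heckemodule}, which yields
\begin{equation*}
\mu_r=|R_{4n}^{(r-1)}|\left(\mu_1-\frac{p^{r-1}-1}{p^r-1}f_{4n+1,1}\right)\qquad(2\le r\le 4n+1),
\end{equation*}
matching the formula claimed in~(2) for $i\ge 2$. Assertion~(1) then reduces to verifying that the same scalar $\mu_1$ acts on $W_\lambda$ independently of the choice of reduced $\lambda$.

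The final step, computing $\mu_1$, is the main obstacle. I would specialize Proposition \ref{Hecke-recurrence} to $r=1$ and $(k,l)=(0,0)$. Thanks to the conventions $|R_{4n-1}^{(-1)}|=0$ together with $|R_{4n-1}^{(-1)}|f_{4n-1,0}=|R_{4n-1}^{(-1)}|u_0=1$, only a handful of terms survive, producing an expression in $W_{0,0}$, $W_{0,1}$ and $W_{1,0}$ corrected by $\beta_\lambda$. I would then use \eqref{local-Maass-rel-2} to rewrite $W_{0,1}=W_{1,0}+p^{-1}W_{0,0}$, express each $W_{k,0}$ via \eqref{Fouriercoeff-lifting}, and invoke the Hecke recursion $c(-p^{2a}m)=\lambda_p c(-p^{2a-1}m)-c(-p^{2a-2}m)$ arising from $T_pf=\lambda_pf$ to eliminate $c(-p^{2}|\lambda|_S^{2})$. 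The resulting ratio $(C_{4n+1}^{(1)}*W_\lambda)_{0,0}/W_{0,0}$ must then simplify to $p^{4n}(\lambda_p^{2}-2)+pf_{4n,1}$ uniformly in $\lambda$, and the equivalent closed form in~(2) follows by expanding $pf_{4n,1}=p(p^{4n}-1)(p^{4n-1}+1)/(p-1)$ as a finite geometric series. Verifying this final cancellation---ensuring that the $\beta_\lambda$-dependent correction at $k=l=0$, the divisor sums in \eqref{Fouriercoeff-lifting} and the Hecke relation for $f$ all conspire to remove the dependence on $\lambda$---is the main technical difficulty I anticipate.
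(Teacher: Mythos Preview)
Your steps (i)--(iii) match the paper's approach: these are Lemma~\ref{local-sp-Bessel}(1) and Proposition~\ref{Simple-Heckemodule}. The gap is in step~(iv). Evaluating Proposition~\ref{Hecke-recurrence} only at $(k,l)=(0,0)$ yields a single scalar identity $(C_{4n+1}^{(1)}*W_\lambda)_{0,0}=\mu_1\,(W_\lambda)_{0,0}$, and that is \emph{not} enough to conclude that $W_\lambda$ is an eigenvector. The space $\cW_\lambda^{\cM}$ is infinite-dimensional: by \eqref{local-Maass-rel-2} a function there is determined by the full sequence $\{W_{k,0}\}_{k\ge 0}$, so the eigenvalue relation must be checked at every $(k,l)$ (equivalently, at $(k,0)$ for all $k\ge 0$). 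Showing that the ratio at $(0,0)$ is the same for all reduced $\lambda$ does not address this; it is one coordinate of infinitely many.

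The paper closes this gap via the translation identity Lemma~\ref{local-sp-Bessel}(2), which rewrites $(C*W_\eta)_{m,l}$ as $p^{-4n(l+m)}(C*A_{p^{l+m}M_m^{-1}\eta})(1_{8n+2})$, and then proves Proposition~\ref{Hecke-eigen-1}, namely $(C_{4n+1}^{(1)}*A_\mu)(1_{8n+2})=\mu_1\,A_\mu(1_{8n+2})$ for \emph{every} $\mu\in\Q^{8n}\setminus\{0\}$, not only for reduced $\mu$. In your language this is precisely the verification at all $(k,l)$, carried out through Lemma~\ref{Main-lemma-prop}(2), which is nothing but Proposition~\ref{Hecke-recurrence} at $r=1$ rewritten via the translation identity. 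The elliptic input used there is the \emph{quadratic} Hecke relation of Lemma~\ref{Main-lemma-prop}(1), expressing $c(-p^{2}n)$ in terms of $c(-n)$ and $c(-n/p^{2})$ with coefficient $\lambda_p^{2}-2$; your linear relation would have to be iterated, and since only arguments of the form $-|\lambda|_S^{2}/d^{2}$ occur in $A(\lambda)$, the odd-power values $c(-p^{2a-1}m)$ you introduce never appear directly and must be eliminated anyway.
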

\begin{proof}
To apply Sugano's local theory as in Section \ref{Sugano-local} we need the following:
\begin{lem}\label{local-sp-Bessel}
Let us fix a prime $p$. 
Suppose that $\lambda\in\Q^{8n}$ is {reduced as an element in the maximal lattice} {$(\Z_p^{8n},J_{4n})$~(or $(\Z_p^{8n},-S)$). 
For this we remark that $(\Z_p^{8n},-S)$ is verified to be $GL_{8n}(\Z_p)$-equivalent to}
{$(\Z_p^{8n},J_{4n})$ by a standard argument using the theory of quadratic forms over p-adic  fields.}
\begin{enumerate}
\item As a function on $G_p$, $A_{\lambda}(g)\in\cW_{\lambda}^{\cM}$ for $g\in G_p$, where we regard $g$ as an element in $\cG(\A_f)$ in the usual manner.\\
\item For non-negative integers $l,m$ and a Hecke operator $C\in\cH_{4n+1}$ we have
\[
(C*A_{\lambda})\left(
\begin{pmatrix}
p^{l+m} & & \\
 & M_{m} & \\
 &          & p^{-(l+m)}
\end{pmatrix}\right)=p^{-4n(l+m)}(C*A_{p^{l+m}M_m^{-1}\lambda})(1_{8n+2}).
\]
\end{enumerate}
\end{lem}
\begin{proof}
We can check that $A_\lambda$ satisfies the local Maass relations (\ref{local-Maass-rel-2}) directly.
We only have to prove $A_{\lambda}(g)\in\cW^{\cF}_{\lambda}$ for $g\in G_p$. It suffices to prove
\[
A_{\lambda}\left(
\begin{pmatrix}
1 & & \\
 & h_0h & \\
 &      & 1 
\end{pmatrix}\right)=A_{\lambda}\left(
\begin{pmatrix}
1 & & \\
  & h & \\
  &   & 1
\end{pmatrix}\right)\quad\forall(h_0,h)\in H_{\lambda}\times\cH(\Q_p). 
\]
To this end write $h_0h$ as $h_0h=a_0u_0^{-1}$ with $a_0\in GL_{8n}(\Q)$ and $u_0\in\prod_{p<\infty}SL_{8n}(\Z_p)\times SL_{8n}(\R)$. Then, for every prime $p$, {the condition $a_0^{-1}\lambda=u_0^{-1}(h_0h)^{-1}\lambda\in\Z_p^{8n}$} implies the greatest power of $p$ dividing the entries in $a_0^{-1}\lambda$ is the same as that of $(h_0h)^{-1}\lambda=h^{-1}\lambda$~(which equals to ``$a^{-1}\lambda\in\Z^{8n}$'' in the notation of Section \ref{Adelic-lift}). We thereby see that the greatest common divisor $d_{\lambda}$ for $L_{h_0h}$ coincides that for $L_h$, and this proves part 1 of the lemma.

In view of the right $K_p$-invariance of $A_{\lambda}$ and the Iwasawa decomposition of $G_p$,  {the} part 2 of the lemma is reduced to showing
\[
A_{\lambda}\left(
\begin{pmatrix}
p^{l+m} & & \\
 & M_m & \\
 &        & p^{-(l+m)}
\end{pmatrix}n(x)
\begin{pmatrix}
1 & & \\
  & h & \\
  &    & 1
\end{pmatrix}\right)=p^{-4n(l+m)}A_{p^{l+m}M_m^{-1}\lambda}\left(n(x)
\begin{pmatrix}
1 & & \\
  & h & \\
  &    & 1
\end{pmatrix}\right)
\]
for any $(x,h)\in\Q_p^{2n}\times\cH(\Q_p)$. 
Reviewing the definition of $A_{\lambda}$, this is verified by a direct calculation.
\end{proof}

To prove the first assertion of the theorem it suffices to prove that $F_f$ is a Hecke eigenform with respect to the Hecke operator $C_{4n+1}^{(1)}$ by virtue of Proposition \ref{Local-Whitt-Maass-sp}{ (or Proposition \ref{Simple-Heckemodule})} and {part 1 of} Lemma \ref{local-sp-Bessel}.
\begin{prop}\label{Hecke-eigen-1}
For  $\lambda\in\Q^{8n}\setminus\{0\}$ we have
\[
(C_{4n+1}^{(1)}*A_{\lambda})(1_{8n+2})=p^{4n}(\lambda_p^2+p^{4n-1}+\cdots+p+p^{-1}+\cdots+p^{-(4n-1)})A_{\lambda}(1_{8n+2}).
\]
\end{prop}
\begin{proof}
We fix an arbitrary prime $p$ and may assume that $\lambda\in\Z_p^{8n}$. 
For the proof of the proposition, the following lemma is crucial.
\begin{lem}\label{Main-lemma-prop}
(1)~The Fourier coefficients $c(n)$ of a Hecke-eigen cusp form $f$ satisfy the following relations:
\begin{align*}
pc(p^2n)&=(\lambda_p^2-1)c(n)-{
\begin{cases}
p^{-\frac{1}{2}}\lambda_pc(n/p)&(p|n)\\
0&(p\nmid n)
\end{cases}},\\
pc(p^2n)&=(\lambda_p^2-2)c(n)-p^{-1}c(n/p^2),
\end{align*}
where we assume {$p^2|n$} for the second formula.\\
(2)~For a reduced $\lambda$ we have $
(C_{4n+1}^{(1)}*A_{\lambda})(
\begin{pmatrix}
p^{l+m} & & \\
 & M_m & \\
 &  & p^{-(l+m)}
\end{pmatrix})$
\begin{align*}
=&p^{8n}\cdot p^{-4n(l+m+1)}A_{p^{l+1}(p^mM_m^{-1}\lambda)}(1_{8n+2})+p^2f_{4n-1,1}\cdot p^{-4n(l+m)}A_{p^l(p^mM_m^{-1}\lambda)}(1_{8n+2})\\
&+p\cdot p^{-4n(l+m)}A_{p^{l+1}(p^{m-1}M_{m-1}^{-1}\lambda)}(1_{8n+2})+p^{8n-1}\cdot p^{-4n(l+m)}A_{p^{l-1}(p^{m+1}M_{m+1}^{-1}\lambda)}(1_{8n+2})\\
&+p^{-4n(l+m-1)}A_{p^{l-1}(p^mM_m^{-1}\lambda)}(1_{8n+2})\\
&+\delta(m=0)\{p^{4n-1} \beta_\lambda p\cdot (p^{-4nl}A_{p^l\lambda}(1_{8n+2})-p^{-4nl}A_{p^{l-1}(pM_1^{-1}\lambda)}(1_{8n+2}))\\
&+p\cdot p^{-4nl}A_{p^l\lambda}(1_{8n+2})\}.
\end{align*}
\end{lem}

The first assertion is a consequence of the well-known Hecke theory for modular forms of one variable~(cf.~\cite[Section 8.5]{Iwn}). Taking {part 2 of Lemma \ref{local-sp-Bessel} into account}, we see that the second assertion is nothing but a reformulation of Proposition \ref{Hecke-recurrence} for the case of $r=1$. 

In view of Lemma \ref{prim-gcd-red} and {part 2 of Lemma \ref{local-sp-Bessel}} we know that Lemma \ref{Main-lemma-prop} (2) describes the action of $C_{4n+1}^{(1)}$ on $A_{\lambda}$ for any $\lambda\in\Q^{8n}\setminus\{0\}$~(or any  $\lambda\in\Z_p^{8n}\setminus\{0\}$). 
As a result we verify the proposition by using Lemma \ref{Main-lemma-prop} (1) and the explicit expression of $A_{\lambda}$ in terms of the Fourier coefficients $c(n)$s. 
\end{proof}

To complete the proof of the theorem, we are left with proving the formula for the other Hecke eigenvalues $\mu_i$~($i\ge 2$). 
This is an immediate consequence from Proposition \ref{Simple-Heckemodule} and {part 1 of Lemma} \ref{local-sp-Bessel}.
As a result we are done.
\end{proof}

\section{Cuspidal representations generated by the lifts}\label{Cusp-rep}
\subsection{The archimedean component of a cuspidal representation generated by a Maass cusp form on $\cG(\A)$}
We let $\Gamma$ be an arithmetic subgroup of $O(Q)$ and suppose that $F\in M(\Gamma,r)$ can be adelized to be a cusp form of the adele group $O(Q)(\A)$, for which we do not have to assume for a moment that $(\Z^N,S)$ is even unimodular. 
By $\pi_F$ we denote the cuspidal representation generated by $F$.
\begin{prop}\label{Irreducibility-cusprep}
If $F$ is Hecke eigenvector at every finite place, $\pi_F$ is irreducible.
\end{prop}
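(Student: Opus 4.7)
The plan is to invoke \cite[Theorem 3.1]{N-P-S}, which asserts that a cuspidal automorphic representation $\pi_F$ generated by a Hecke eigenvector $F$ at every finite place is irreducible as soon as the $(\g, K_\infty)$-module $\pi_{F,\infty}$ it generates at the archimedean place is irreducible. This reduces the problem to establishing irreducibility of $V_F := U(\g) \cdot F$ viewed as a Harish-Chandra module.

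First I would observe that $V_F$ is spherical (since $F$ is right $K_\infty$-fixed), admissible, and unitarizable (since $F$ lies in $L^2_{\mathrm{cusp}}(\cG(\Q)\backslash\cG(\A))$). The condition $\Omega\cdot F=\tfrac{1}{2N}(r^2-N^2/4)F$ gives $V_F$ a fixed infinitesimal character. By complete reducibility one can write $V_F=\bigoplus_i V_i$ with each $V_i$ an irreducible unitary spherical Harish-Chandra module sharing this infinitesimal character.

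Next I would appeal to the classification of the spherical unitary dual of the real rank-one group $G_\infty=O(1,N+1)(\R)$: for each admissible infinitesimal character there is a unique isomorphism class of irreducible unitary spherical $(\g,K_\infty)$-module, realized as the spherical constituent of the corresponding principal series, and each such representation has a one-dimensional space of $K_\infty$-fixed vectors. Hence all $V_i$ are mutually isomorphic. To rule out multiplicity greater than one, I would use the fact that projecting $F$ onto each summand yields a right $K_\infty$-fixed, cuspidal, Hecke-eigen vector with identical eigencharacter at every finite place and identical archimedean infinitesimal character; the multiplicity-one content packaged inside \cite[Theorem 3.1]{N-P-S} then forces a single summand.

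The main obstacle will be this last multiplicity step --- ensuring that no two genuinely distinct spherical cuspidal summands can share all Hecke eigenvalues at finite places together with the archimedean infinitesimal character. This is exactly what \cite[Theorem 3.1]{N-P-S} is designed to handle, so the essential work consists of verifying that its hypotheses apply, which follows directly from $F$ being a right $K_\infty$-fixed Casimir eigenfunction and a Hecke eigenform at every finite place.
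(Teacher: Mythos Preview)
Your overall strategy matches the paper's: invoke \cite[Theorem~3.1]{N-P-S} to reduce to irreducibility of the archimedean $(\g,K_\infty)$-module $V_F$, then use that $F$ is $K_\infty$-fixed together with the one-dimensionality of the $K_\infty$-fixed space in each irreducible spherical constituent. The gap is in your final multiplicity step. After writing $V_F=\bigoplus_i V_i$ with all $V_i$ mutually isomorphic irreducible spherical modules, you try to rule out more than one summand by projecting $F$ to $f_i\in V_i$ and invoking ``the multiplicity-one content packaged inside \cite[Theorem~3.1]{N-P-S}''. But that theorem is an irreducibility criterion whose \emph{hypothesis} is precisely the archimedean irreducibility you are trying to establish; it does not furnish a multiplicity-one statement forcing the distinct $f_i$ to collapse, and a global multiplicity-one theorem in the cuspidal spectrum of orthogonal groups is not something you can simply quote here. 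As written, this step is circular.

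The repair is purely local and already within reach of your setup. Since the $V_i$ are pairwise isomorphic via maps $\phi_i:V_1\to V_i$ and each has a one-dimensional $K_\infty$-fixed space, the projections satisfy $f_i=c_i\phi_i(f_1)$ for scalars $c_i$, whence
\[
U(\g)\cdot F=\Bigl\{\sum_i c_i\phi_i(Xf_1):X\in U(\g)\Bigr\}
\]
is a single diagonal copy of $V_1$ inside $\bigoplus_i V_i$. But $V_F=U(\g)\cdot F$ by definition, so $V_F$ is this irreducible diagonal and there can be only one summand. This is exactly the content of the paper's closing sentence (``$F$ should be inside only one irreducible admissible representation of $G_\infty$''); the embedding into a spherical principal series via the Langlands classification supplies both the one-dimensionality of the $K_\infty$-fixed space and, through the fixed Casimir eigenvalue, the uniqueness of the isomorphism class that you stated separately.
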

\begin{proof}
We use \cite[Theorem 3.1]{N-P-S}, which reduces the problem to the irreducibility of the archimedean local representation of $\pi_F$.  
We first note that, as is well-known, each irreducible cuspidal representation occurs with finite multiplicity in the cuspidal spectrum, which implies that $\pi_F$ is a finite sum of irreducible cuspidal representations. We therefore see that its archimedean representation is also a finite sum of irreducible admissible representations of  $G_{\infty}$~($=O(Q)(\R)$). Let us now note that $F$ is right $K_{\infty}$-invariant, which means that $F$ generates the trivial representation as a $K_{\infty}$-module. The trivial $K_{\infty}$-module should occur with at most multiplicity one in an irreducible admissible representation of $G_{\infty}$. In fact, in view of the Langlands classification \cite{La}, an irreducible admissible representation with the trivial $K_{\infty}$-representation can be embedded into a spherical principal series representation, which has the trivial $K_{\infty}$-representation with multiplicity one. 
We thereby know that $F$ should be inside only one irreducible admissible representation of $G_{\infty}$.     
\end{proof}

We next determine the irreducible admissible representation of $G_{\infty}$ which $F$ generates. To this end, let $\delta_s:A_{\infty}\rightarrow\C^{\times}$ be the quasi-character parametrized by $s\in\C$ given by the formula $\delta_s(y) = y^s$. {We trivially extend $\delta_s$ to a quasi character of the standard proper parabolic subgroup $P_{\infty}$},
{which admits a Langlands decomposition $N_{\infty}A_{\infty}M_{\infty}$ with Levi subgroup}
{$\left\{\left.\left(\begin{smallmatrix}
1 & {}^t0_N & 0\\
0_N & m & 0_N\\
0 & {}^t0_N & 1
\end{smallmatrix}\right)~\right|~m\in U_{\infty}(=O(S)(\R))\right\}$}. By $I_{P_{\infty}}^{G_{\infty}}(\delta_s)$ we  denote the normalized parabolic induction defined by $\delta_s$.
We remark that every spherical principal series representation of $G_{\infty}$ is of this form. We now have the following:
\begin{prop}\label{archimedean-rep}
Suppose that $F\in M(\Gamma,\sqrt{-1}r)$ with $r\in\R$. The archimedean component of $\pi_F$ is isomorphic to $I_{P_{\infty}}^{G_{\infty}}(\delta_{\sqrt{-1}r})$ as admissible $G_{\infty}$-modules, and is tempered.
\end{prop}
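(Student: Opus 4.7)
The plan is to identify $\pi_{\infty}$ via a Casimir-eigenvalue calculation and then invoke standard facts about unitary principal series of rank-one real groups. By Proposition~\ref{Irreducibility-cusprep}, $\pi_F$ is irreducible, so $\pi_{\infty}$ is a single irreducible admissible $G_{\infty}$-module. Since the nonzero vector $F|_{G_{\infty}}\in\pi_{\infty}$ is right $K_{\infty}$-invariant, $\pi_{\infty}$ is spherical. By the Langlands/Casselman subrepresentation theorem together with the already-noted multiplicity-one of the trivial $K_{\infty}$-type in a spherical principal series, $\pi_{\infty}$ embeds as a constituent of $I_{P_{\infty}}^{G_{\infty}}(\delta_{s})$ for a unique $s\in\C$ modulo $s\leftrightarrow-s$.

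The key step is to compute the Casimir eigenvalue on the normalized spherical vector $\phi_{s}\in I_{P_{\infty}}^{G_{\infty}}(\delta_{s})$, determined by $\phi_{s}(n(x)a_{y}k)=y^{N/2+s}$ (the factor $y^{N/2}$ coming from $\delta_{P_{\infty}}^{1/2}$, compatibly with \eqref{Iwasawa-decomp}). Since $\Omega$ is central, it acts by the scalar $c(s)=(\Omega\phi_{s})(1_{G_{\infty}})$. A direct evaluation at the identity gives $(H\phi_{s})(1)=N/2+s$, $(H^{2}\phi_{s})(1)=(N/2+s)^{2}$, and $(E_{\alpha}^{(i)}\phi_{s})(1)=((E_{\alpha}^{(i)})^{2}\phi_{s})(1)=0$, the last vanishing because $\phi_{s}$ is identically $1$ on $N_{\infty}\subset N_{\infty}A_{\infty}K_{\infty}$. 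Substituting into \eqref{Casimir-defn} yields
\[
c(s)=\frac{1}{2N}(N/2+s)^{2}-\frac{1}{2}(N/2+s)=\frac{1}{2N}\!\left(s^{2}-\frac{N^{2}}{4}\right).
\]
Comparing with the eigenvalue $\tfrac{1}{2N}((\sqrt{-1}r)^{2}-N^{2}/4)$ imposed on $F$ forces $s^{2}=-r^{2}$, so $s\in\{\pm\sqrt{-1}r\}$. The two choices lie in the same Weyl-group orbit and define the same spherical principal series; thus $\pi_{\infty}$ is the spherical constituent of $I_{P_{\infty}}^{G_{\infty}}(\delta_{\sqrt{-1}r})$.

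For $r\in\R$ the character $\delta_{\sqrt{-1}r}(a_{y})=y^{\sqrt{-1}r}$ is unitary, so $I_{P_{\infty}}^{G_{\infty}}(\delta_{\sqrt{-1}r})$ is a unitary principal series. Because $\cG$ has real rank one, Bruhat's irreducibility theorem (equivalently Knapp's criterion) asserts that this induced representation is irreducible for $r\neq 0$, and every unitary principal series of a real reductive group is tempered; hence $\pi_{\infty}\cong I_{P_{\infty}}^{G_{\infty}}(\delta_{\sqrt{-1}r})$ and is tempered. The degenerate case $r=0$ does not affect temperedness: any constituent of the tempered representation $I_{P_{\infty}}^{G_{\infty}}(\delta_{0})$ is tempered, and the identification in the statement is then understood as picking out the unique spherical constituent. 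The main obstacle is a careful bookkeeping of normalizations---the modular-character shift $\rho=N\alpha/2$ and the passage from $\delta_{s}$ to $\phi_{s}$---so that the Casimir eigenvalue $\tfrac{1}{2N}(s^{2}-N^{2}/4)$ matches precisely against the parameter of $F$; beyond this, everything reduces to standard rank-one real representation theory.
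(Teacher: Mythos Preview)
Your argument is correct and follows essentially the same line as the paper's proof: embed the spherical archimedean component into a principal series, pin down the parameter $s=\pm\sqrt{-1}r$ by matching Casimir eigenvalues, and then invoke standard rank-one results for irreducibility and temperedness. Your Casimir computation is more explicit than the paper's (which simply asserts $\Omega\cdot v=\tfrac{1}{2N}(s^{2}-N^{2}/4)v$), and you cite Bruhat/Knapp where the paper cites Harish-Chandra \cite{Ha} and Collingwood \cite{Co}; these are interchangeable for the present purpose. One small logical wrinkle: you invoke Proposition~\ref{Irreducibility-cusprep} to conclude $\pi_{F}$ is irreducible, but that proposition assumes $F$ is a Hecke eigenvector at every finite place, which is not part of the hypothesis here---what you actually need, and what the \emph{proof} of Proposition~\ref{Irreducibility-cusprep} supplies without that assumption, is only the irreducibility of the archimedean module generated by $F$.
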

\begin{proof}
Let $v$ be the $K_{\infty}$-vector of $I_{P_{\infty}}^{G_{\infty}}(\delta_s)$, which is unique up to scalars. If the archimedean representation is embedded into this parabolic induction, the vector $v$ has the same eigenvalue for the Casimir operator $\Omega$ as $F$. We then verify that
\[
\Omega\cdot v=\frac{1}{2N}(s^2-\frac{N^2}{4})v=-\frac{1}{2n}(r^2+\frac{N^2}{4})v,
\]
and therefore $s=\pm\sqrt{-1}r$. By the same reasoning as in \cite[Proposition 6.5,~Theorem 6.8]{Mu-N-P} we then see that the proposition is a consequence of \cite[Section 41,~Theorem 1]{Ha} and \cite[Remark (2.1.13)]{Co}.
\end{proof}
\subsection{Unramified principal series representations at non-archimedean places}
For unramified characters $\chi_i$ of $\Q_p^{\times}, 1\le i\le 4n+1$, denote an unramified character of the standard split torus~($\simeq(\Q_p^{\times})^{4n+1}$) by $\chi=\diag(\chi_1,\chi_2,\cdots,\chi_{4n+1},\chi_{4n+1}^{-1},\cdots,\chi_2^{-1},\chi_1^{-1})$. Let $I(\chi)$ be the normalized parabolic induction of $G_p\simeq G_{4n+1}$ induced from the character of the Borel subgroup defined by $\chi$. The representation of $G_p$ given by $I(\chi)$ is called an unramified principal series representation. 
For us it is important to review the fundamental properties of unramified principal series representations of $G_p\simeq G_{4n+1}$. In many references $p$-adic reductive groups are often assumed to be connected for the convenience of the argument to study such representations. 
Though $G_p$ is not connected, we can say that such fundamental properties are still valid for $G_p$. We need the following lemma:
\begin{lem}\label{unramified-rep}
\begin{enumerate}
\item For any unramified character $\chi$, the unramified principal series representation {$I(\chi)$} has a unique irreducible subquotient with a $K_p$-invariant vector~(called a spherical vector). 
Conversely, any irreducible admissible representation of $G_p$ with a spherical vector~(called an irreducible unramified representation) is given by the irreducible subquotient of an unramified principal series representation.
\item Two irreducible unramified representations are isomorphic to each other if and only if the Hecke eigenvalues of the spherical vectors of the two representations are the same.
\end{enumerate}
\end{lem}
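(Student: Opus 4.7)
The plan is to reduce the two assertions to the standard theory of spherical representations for connected $p$-adic reductive groups, and then account for the non-connectedness of $G_p = O(Q)(\Q_p)$ by invoking the Satake isomorphism in the generality already referenced in Section~\ref{Sugano-local} via \cite{Sa}. Throughout, I write $B_p$ for the Borel/minimal parabolic used to define $I(\chi)$ and write $\mathcal{H}_p$ for the spherical Hecke algebra of $(G_p, K_p)$.

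The first preliminary step is the Iwasawa decomposition $G_p = B_p K_p$. This follows by writing $G_p = SO(Q)(\Q_p) \sqcup SO(Q)(\Q_p)\cdot\epsilon$ for a reflection $\epsilon \in K_p$, invoking the Iwasawa decomposition on the connected component $SO(Q)(\Q_p)$, and noting that $\epsilon$ may be absorbed into $K_p$. This decomposition immediately shows that $\dim I(\chi)^{K_p} = 1$: a $K_p$-invariant element of the induced model is determined by its value at the identity, and the normalization of induction ensures such a vector exists.

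For part~(1), once $\dim I(\chi)^{K_p} = 1$ is in hand, the commutativity of $\mathcal{H}_p$ (Satake's theorem, as cited for the non-connected orthogonal group in Section~\ref{Sugano-local}) implies that the spherical line generates a single Hecke-eigensystem, and hence a unique irreducible subquotient of $I(\chi)$ containing a spherical vector. Conversely, for any irreducible admissible representation $\pi$ of $G_p$ with $\pi^{K_p} \neq 0$, the standard Borel--Casselman argument applies: the Jacquet module $\pi_{N_p}$ along the unipotent radical of $B_p$ is nonzero (by averaging a $K_p$-fixed vector), and by Frobenius reciprocity $\pi$ embeds as a subrepresentation into $I(\chi')$ for some unramified character $\chi'$. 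Hence $\pi$ is the spherical subquotient of $I(\chi')$.

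For part~(2), the Satake isomorphism identifies $\mathcal{H}_p$ with the Weyl-group invariants in the group algebra of the cocharacter lattice of the split torus. Therefore characters of $\mathcal{H}_p$ are in bijection with Weyl-orbits of unramified characters $\chi$, equivalently with Satake parameters. Since the spherical vector is a simultaneous eigenvector for $\mathcal{H}_p$, two irreducible unramified representations with matching Hecke eigenvalues on their respective spherical vectors yield the same character of $\mathcal{H}_p$, the same Satake parameter, and hence by part~(1) the same irreducible unramified representation. The main obstacle throughout is the non-connectedness of $G_p$, which forces one to verify that the classical Borel--Casselman--Satake package survives; this is precisely the content of the generality in \cite{Sa} and of the explicit choice of $K_p$ meeting both components of $G_p$.
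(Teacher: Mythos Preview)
Your proposal is correct and covers the same ground as the paper's proof, but the converse direction of part~(1) is handled differently. The paper argues via zonal spherical functions: every irreducible unramified representation is realized as the $G_p$-module generated by a zonal spherical function, and Satake's parametrization of zonal spherical functions by unramified characters modulo the Weyl group then yields an embedding into some $I(\chi')$. You instead invoke the Borel--Casselman Jacquet module argument together with Frobenius reciprocity. Both routes are standard; the paper's choice keeps everything inside the framework of \cite{Sa} already cited for the disconnected group, whereas your route imports the Jacquet functor machinery, which is typically developed for connected groups and so requires the same caveat you flag at the end. For the first assertion of part~(1), note that once $\dim I(\chi)^{K_p}=1$ is established, uniqueness of the spherical subquotient follows directly from exactness of $V\mapsto V^{K_p}$ on admissible representations; the commutativity of $\mathcal{H}_p$ is not really needed at that step, though it does no harm. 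Part~(2) is argued identically in both.
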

\begin{proof} 
We first note that, as an admissible $G_p$-module, every irreducible unramified representation is isomorphic to the space of functions on $G_p$ generated by a zonal spherical function and that every zonal spherical function is uniquely parametrized by an unramified character of the split torus of $G_p$ modulo the conjugation by the Weyl group~(cf.~\cite[Theorem 4.3]{Ca},~\cite[Theorem 2]{Sa}). This implies the second  assertion of part 1 of the lemma since the $G_p$-module generated by a zonal spherical function can be embedded into an unramified principal series representation with the unramified character parametrizing the zonal spherical function. 
Part 2 of the lemma follows from the bijective correspondence between unitary algebra  homomorphisms of the Hecke algebra $\cH_{4n+1}$ to $\C$ and the equivalence classes of unramified characters of the split torus by the Weyl group conjugation~(cf.~\cite[Corollary 4.2]{Ca}), the latter of which parametrize the equivalence classes of the irreducible unramified representations.  We should note that the disconnected-ness of $G_p$ has no influence for these consequences~(see \cite[Theorem 5,~Remark 1 after Theorem 3]{Sa} and \cite[Section 4,4]{Ca}). 
In fact, Satake's theory on the Hecke algebras and the zonal spherical functions hold also for $G_p$, which has the commutative Hecke algebra $\cH_{4n+1}$ and admits the Iwasawa and Cartan decompositions.

We are now left with the first assertion of part 1 of the lemma. The Frobenius reciprocity of induced representations implies that the unramified principal series representation restricted to $K_p$ has the trivial representation of $K_p$ with multiplicity one, which leads to the first assertion of part 1 of the lemma, since there is a contradiction to the multiplicity one condition unless the uniqueness of the irreducible subquotient with a spherical vector holds. In fact, every {irreducible} unramified representation has a unique spherical vector, up to constant multiples, since it admits a unique zonal spherical function.  
\end{proof}

We call the unique irreducible subquotient of an unramified principal series representation the {\it spherical constituent}.
\subsection{Cuspidal representation generated by $F_f$}
We are now able to show the result on the explicit determination of the cuspidal representation generated by $F_f$ as follows:
\begin{thm}\label{MainThm-second}
Let $f$ be a Hecke eigenform and let $\pi_{F_f}$ be the cuspidal representation generated by $F_f$.
\begin{enumerate}
\item The representation $\pi_{F_f}$ is irreducible and thus has the decomposition into the restricted tensor product $\otimes'_{v\le\infty}\pi_v$ of irreducible admissible representations $\pi_v$.
\item For $v=p<\infty$, $\pi_p$ is the spherical constituent of the unramified principal series representation of $G_p$ with the Satake parameter
\[
\diag \left(\left(\frac{\lambda_p+\sqrt{\lambda_p^2-4}}{2}\right)^2,p^{4n-1},\cdots,p,1,1,p^{-1},\cdots,p^{-(4n-1)},\left(\frac{\lambda_p+\sqrt{\lambda_p^2-4}}{2}\right)^{-2}\right).
\]
\item For every finite prime $p<\infty$, $\pi_p$ is non-tempered while $\pi_{\infty}$ is tempered.
\end{enumerate}
\end{thm}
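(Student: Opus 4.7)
The plan is to deduce the three assertions of Theorem \ref{MainThm-second} from the Hecke-theoretic Theorem \ref{MainThm-first} together with the structural results of this section (Propositions \ref{Irreducibility-cusprep}, \ref{archimedean-rep} and Lemma \ref{unramified-rep}).

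For assertion (1), since $f$ is assumed to be a Hecke eigenform, Theorem \ref{MainThm-first}(1) tells us that $F_f$ is a Hecke eigenform at every finite place. Proposition \ref{Irreducibility-cusprep} then immediately yields the irreducibility of $\pi_{F_f}$, and the usual Flath-type decomposition for irreducible admissible representations of adele groups produces the factorization $\pi_{F_f}\simeq\otimes'_{v\le\infty}\pi_v$.

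For assertion (2), each factor $\pi_p$ is an irreducible admissible representation of $G_p$ admitting a non-zero $K_p$-fixed vector (coming from the $K_f$-invariance of $F_f$ established in Lemma \ref{Well-def-smfct}), so $\pi_p$ is irreducible unramified. By Lemma \ref{unramified-rep}(1) it is the spherical constituent of some unramified principal series, and by Lemma \ref{unramified-rep}(2) it is uniquely determined by the Hecke eigenvalues of its spherical vector, which coincide with those of $F_f$ computed in Theorem \ref{MainThm-first}(2). To identify the Satake parameter I would set $\alpha:=\left(\frac{\lambda_p+\sqrt{\lambda_p^2-4}}{2}\right)^2$, so that $\alpha+\alpha^{-1}=\lambda_p^2-2$, and let $\chi$ be the proposed diagonal. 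The key computation is to verify that the eigenvalue of $C_{4n+1}^{(1)}$ on the spherical vector of $I(\chi)$, obtained via Satake's explicit formula on the disconnected orthogonal group (as justified in the proof of Lemma \ref{unramified-rep}), equals
\[
p^{4n}\Bigl((\alpha+\alpha^{-1})+2+\sum_{j=1}^{4n-1}(p^j+p^{-j})\Bigr)=p^{4n}\bigl(\lambda_p^2+p^{4n-1}+\cdots+p+p^{-1}+\cdots+p^{-(4n-1)}\bigr),
\]
which is exactly $\mu_1$ from Theorem \ref{MainThm-first}(2). Since Proposition \ref{Simple-Heckemodule} shows that on the local Maass space $\cW_{\lambda}^{\cM}$ the eigenvalues for all $C_{4n+1}^{(r)}$ are determined by that of $C_{4n+1}^{(1)}$, coincidence of all Hecke eigenvalues follows, giving assertion (2).

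Assertion (3) splits naturally. The temperedness of $\pi_{\infty}$ is already the content of Proposition \ref{archimedean-rep}, since $r\in\R$ makes $I_{P_{\infty}}^{G_{\infty}}(\delta_{\sqrt{-1}r})$ a unitary principal series. For non-temperedness at a finite prime $p$, I would invoke the standard temperedness criterion for irreducible unramified representations of a reductive $p$-adic group, namely that temperedness forces all Satake parameters to lie on the unit circle. But the Satake diagonal exhibited in (2) visibly contains the entry $p^{4n-1}$ of absolute value $p^{4n-1}>1$ for $n\ge 1$, so $\pi_p$ is non-tempered. The main obstacle is the explicit computation in (2) relating $\mu_1$ to the sum of Satake entries: one must carefully track the $p^{4n}$ factor coming from the modulus character $\delta_{B}^{1/2}$ of the minimal parabolic of $G_{4n+1}$ and apply Satake's theorem in the form valid for the non-connected orthogonal group, precisely the setting handled by \cite[Theorem 5]{Sa}. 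Once this normalization is pinned down, everything else is algebraic bookkeeping.
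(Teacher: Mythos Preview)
Your arguments for (1) and for the archimedean half of (3) match the paper's. The non-archimedean half of (3) differs: you invoke the ``Satake parameters are unitary if tempered'' criterion, whereas the paper argues directly that the matrix coefficient of the spherical vector is not in $L^{2+\epsilon}$, using that the proposed parameter makes $(\pi_p|_{G_i},\C v)$ the trivial representation for $i\le 4n$. Both are fine.

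There is, however, a genuine gap in your argument for (2). You verify that the spherical vector of $I(\chi)$, for the proposed $\chi$, has $C_{4n+1}^{(1)}$-eigenvalue $\mu_1$, and then you write: ``Since Proposition~\ref{Simple-Heckemodule} shows that on $\cW_{\lambda}^{\cM}$ the eigenvalues for all $C_{4n+1}^{(r)}$ are determined by that of $C_{4n+1}^{(1)}$, coincidence of all Hecke eigenvalues follows.'' This inference is not valid. Proposition~\ref{Simple-Heckemodule} is a statement about the $\cH_m$-module $\cW_{\lambda}^{\cM}$; it tells you that the eigenvalues $\mu_i$ of $F_f$ (whose local Fourier coefficient lies in $\cW_{\lambda}^{\cM}$ by Lemma~\ref{local-sp-Bessel}) satisfy the relation $\mu_i=|R_{4n}^{(i-1)}|\bigl(\mu_1-\tfrac{p^{i-1}-1}{p^i-1}f_{4n+1,1}\bigr)$. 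It says nothing about the Hecke eigenvalues of the spherical vector of an \emph{arbitrary} unramified principal series $I(\chi)$. Since the Hecke algebra $\cH_{4n+1}$ has $4n+1$ independent generators, there are many Weyl-inequivalent $\chi$ sharing the same $C_{4n+1}^{(1)}$-eigenvalue $\mu_1$; matching $\mu_1$ alone does not pin down $\chi$.

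The paper closes this gap by a direct computation of \emph{all} the eigenvalues $\phi_{4n+1}(C_{4n+1}^{(i)})$ on the spherical vector of the proposed $I(\chi)$. The key input is a recursion, coming from the coset decomposition in \cite[Lemma~7.1]{Su}, expressing $\phi_{4n+1}(C_{4n+1}^{(i)})$ in terms of the quantities $\phi_{4n}(C_{4n}^{(j)})$. The specific form of the proposed $\chi$ (namely $\chi_2(p)=p^{4n-1},\ldots,\chi_{4n+1}(p)=1$) forces the spherical vector to be fixed by $G_{4n}$, so that $\phi_{4n}(C_{4n}^{(j)})=|R_{4n}^{(j)}|$; plugging this into the recursion yields $\phi_{4n+1}(C_{4n+1}^{(i)})=\mu_i$ for every $i$. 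Your sketch is missing precisely this step (and, relatedly, your formula for the $C_{4n+1}^{(1)}$-eigenvalue via ``Satake's explicit formula'' is asserted rather than derived; in the paper it is the $i=1$ case of the same recursion).
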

\begin{proof}
This first assertion is a consequence of Theorem \ref{MainThm-first}  and Proposition \ref{Irreducibility-cusprep}. To prove the other two assertions we fix an arbitrary prime $p$. Since $F_f$ is right $K_p$-invariant for each finite prime $p$, $\pi_p$ has to be the spherical constituent of an unramified principal series representation~(cf.~Lemma \ref{unramified-rep} (1)). 

The Hecke eigenvalue $\mu_1$ of $F_f$ enables us to {suppose} that  
\begin{align*}
&\diag(\chi_1(p),\chi_2(p),\cdots,\chi_{4n+1}(p),\chi_{4n+1}(p)^{-1},\cdots,\chi_2(p)^{-1},\chi_1(p)^{-1})=\\
&\diag\left(\left(\frac{\lambda_p+\sqrt{\lambda_p^2-4}}{2}\right)^2,p^{4n-1},\cdots,p,1,1,p^{-1},\cdots,p^{-(4n-1)},\left(\frac{\lambda_p+\sqrt{\lambda_p^2-4}}{2}\right)^{-2}\right)
\end{align*}
as the second assertion indicates. 
Since irreducible unramified representations are determined by Hecke eigenvalues of the spherical vectors up to equivalence (cf.~Lemma \ref{unramified-rep} 2.) we need to show the Hecke eigenvalue of a spherical vector of $\pi_p$ for $C_{4n+1}^{(i)}$ coincides with $\mu_i$ for each $i\ge 1$. 
This is nothing but the second assertion. The last assertion for $\pi_p$ follows from the second one since the Satake parameter tells us that {the representation $(\pi_p|_{G_{i}},\C v)$ with a spherical vector $v$} is the trivial representation of $G_i$ for $i\le 4n$. 
In fact, to show the non-temperedness of $\pi_p$, consider the $(2+\epsilon)$-th power of the matrix coefficient of the spherical vector for each $\epsilon>0$. For any fixed positive integer $i\leq 4n$ the integral of this over $\{K_pgK_p\mid g\in G_i\}$ is then divergent. 
For this we remark that $G_i$ is viewed as a subgroup of $G_p\simeq G_{4n+1}$ by embedding it naturally into the middle {$2i\times 2i$}-block of $G_p$.
The tempered-ness of $\pi_{\infty}$ is nothing but Proposition \ref{archimedean-rep}. 

The rest of the proof is thus devoted to studying the Hecke eigenvalues. 
For that purpose we state the following:
\begin{lem}
For $1\le m\le 4n+1, 1\le i\le m$ let $\phi_m(C_{m}^{(i)})$ denote the $C_{m}^{(i)}$-action on the spherical vector of $\pi_p|_{G_m}$.  
For $i\ge 2$ we have
\[
\phi_{4n+1}(C_{4n+1}^{(i)})=p^{4n}(\chi_1(p)+\chi(p)^{-1})\phi_{4n}(C_{4n}^{(i-1)})+(p^{i-1}-1)f_{4n,i-1}\phi_{4n}(C_{4n}^{(i-2)})+p^i\phi_{4n}(C_{4n}^{(i)}).
\]
\end{lem}
\begin{proof}
Noting the parabolic induction which contains $\pi_p$ as the spherical constituent, we can deduce this from the coset decomposition of $K_pc_{4n+1}^{(i)}K_p$ in \cite[Lemma 7.1]{Su}.
\end{proof}

Reviewing the normalization of the invariant measure as in the beginning of Section \ref{Hecke-theory}, we see
\[
\phi_{4n}(C_{4n}^{(i)})=|R_{4n}^{(i)}|~(1\le i\le 4n)
\]
since the action $\pi_p|_{G_m}$ on the spherical vector is trivial for $m\le 4n$ as is explained above. In addition, we also note
\[
\frac{|R_{4n}^{(i)}|}{|R_{4n}^{(i-1)}|}=f_{4n,i}~(1\le i\le 4n).
\]
Hence, from this lemma we get
\[
\phi_{4n+1}(C_{4n+1}^{(i)})=
\begin{cases}
\mu_1&(i=1),\\
|R_{4n}^{(i-1)}|(\mu_1+(p^{i-1}-1)+p^if_{4n,i}-pf_{4n,1})&(i\ge 2).
\end{cases}
\]
Without difficulty we then verify by a direct calculation that the value of $\phi_{4n+1}(C_{4n+1}^{(i)})$ coincides with $\mu_i$. Consequently we have proved Theorem \ref{MainThm-second}.
\end{proof}

As a result of this theorem~(or as a result of Theorem \ref{MainThm-first} and \cite[Corollary 7.9]{Su}) we can write down the standard $L$-function of $\pi_{F_f}$~(or $F_f$).
\begin{cor}\label{Std-L-fct}
For any prime $p$ the local $p$-factor $L_p(\pi_{F_f},\operatorname{St},s)$ of the standard $L$-function for $\pi_{F_f}$~(or $F_f$) is written as
\begin{align*}
L_p(\pi_{F_f},\operatorname{St},s)&=\zeta_p(s)(1-(\lambda_p^2-2)p^{-s}+p^{-2s})^{-1}\prod_{j=0}^{8n-2}\zeta_p(s+j-(4n-1))\\
&=L_p(\sym^2(f),s)\prod_{j=0}^{8n-2}\zeta_p(s+j-(4n-1)),
\end{align*}
where $\zeta_p$ denotes the $p$-factor of the Riemann zeta function and $L_p(\sym^2(f),s)$ is the $p$-factor of the symmetric square $L$-function for $f$.
\end{cor}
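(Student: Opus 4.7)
The plan is to compute the standard local $L$-factor directly from the Satake parameter obtained in Theorem~\ref{MainThm-second}~(2), then regroup the resulting Euler factors so that the piece coming from the ``non-trivial'' Satake entries $\alpha^{\pm 2}$ together with one copy of $(1-p^{-s})^{-1}$ becomes $L_p(\sym^2(f),s)$, and the remaining factors assemble into the product of shifted local zeta values.

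More precisely, for the standard representation of $O(1,8n+1)$ of dimension $8n+2$, and the spherical representation $\pi_p$ with Satake parameter
\[
\diag\!\left(\alpha^2,\,p^{4n-1},\dots,p,\,1,\,1,\,p^{-1},\dots,p^{-(4n-1)},\,\alpha^{-2}\right),\qquad \alpha=\frac{\lambda_p+\sqrt{\lambda_p^2-4}}{2},
\]
one has, by definition of the standard local $L$-factor,
\[
L_p(\pi_{F_f},\operatorname{St},s)=(1-\alpha^2 p^{-s})^{-1}(1-\alpha^{-2}p^{-s})^{-1}(1-p^{-s})^{-2}\prod_{k=1}^{4n-1}(1-p^{k}p^{-s})^{-1}(1-p^{-k}p^{-s})^{-1}.
\]
First I would check that the total number of factors is correct: $1+1+2+2(4n-1)=8n+2$, matching the dimension of the standard representation. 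Next I would verify the identity $\alpha^2+\alpha^{-2}=\lambda_p^2-2$ (immediate from $\alpha+\alpha^{-1}=\lambda_p$), so that
\[
(1-\alpha^2 p^{-s})^{-1}(1-\alpha^{-2}p^{-s})^{-1}=(1-(\lambda_p^2-2)p^{-s}+p^{-2s})^{-1}.
\]
Combining this with one copy of $(1-p^{-s})^{-1}=\zeta_p(s)$ yields exactly the shape
\[
\zeta_p(s)(1-(\lambda_p^2-2)p^{-s}+p^{-2s})^{-1},
\]
which is the standard Euler factor of $L_p(\sym^2(f),s)$ for a Hecke--Maass cusp form with Satake parameters $\alpha,\alpha^{-1}$.

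For the remaining factors, I would reindex: the product
\[
(1-p^{-s})^{-1}\prod_{k=1}^{4n-1}(1-p^{k}p^{-s})^{-1}(1-p^{-k}p^{-s})^{-1}=\prod_{k=-(4n-1)}^{4n-1}\zeta_p(s-k).
\]
Substituting $j=k+(4n-1)$ (so $j$ runs over $0,1,\dots,8n-2$) turns this into $\prod_{j=0}^{8n-2}\zeta_p(s+j-(4n-1))$, matching the second factor in the statement. Multiplying the two groups gives the claimed formula and, using the Euler-product definition of $L_p(\sym^2(f),s)$, the second equality in the corollary follows at once.

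There is no real obstacle here: the content is entirely the bookkeeping of the Satake parameters from Theorem~\ref{MainThm-second}~(2) together with the elementary identity $\alpha^2+\alpha^{-2}=\lambda_p^2-2$. The only place to be careful is counting the multiplicity of the ``trivial'' eigenvalue $1$ in the Satake parameter (which occurs twice), since one of the two $\zeta_p(s)$ factors produced must be absorbed into $L_p(\sym^2(f),s)$ while the other survives as the $j=4n-1$ term in the shifted product.
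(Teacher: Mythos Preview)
Your proposal is correct and follows essentially the same approach as the paper: the corollary is presented there without a detailed proof, simply as a direct consequence of the Satake parameter computed in Theorem~\ref{MainThm-second}~(2) (the paper also notes it can alternatively be read off from Theorem~\ref{MainThm-first} together with \cite[Corollary~7.9]{Su}). You have spelled out exactly the bookkeeping the paper leaves implicit. One cosmetic point: with your substitution $j=k+(4n-1)$ the product comes out as $\prod_{j=0}^{8n-2}\zeta_p(s+(4n-1)-j)$ rather than $\prod_{j=0}^{8n-2}\zeta_p(s+j-(4n-1))$, but of course these are the same product after reversing the index, so nothing is wrong.
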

\section{Appendix: Cuspidal representations generated by Oda-Rallis-Sciffmann lifts}
We have used Sugano's non-archimedean local theory in \cite[Section 7]{Su} to study {the} Hecke theory of the cusp forms given by our lifting and the cuspidal representations generated by them. 
His local theory is originally motivated by studying non-archimedean local aspect of the lifting theory of Oda \cite{Od} and Rallis-Schiffman \cite{Ra-Sch}. We can therefore expect that the results in Section \ref{Hecke-theory} and \ref{Cusp-rep} naturally hold also for the lifting by Oda and Rallis-Schiffman. In this appendix, still restricting ourselves to ``the case of even unimodular lattices'', we carry out the argument similar to Sections \ref{Hecke-theory} and \ref{Cusp-rep} to deduce similar results for the case of Oda-Rallis-Schiffman lifting. 
\subsection{Basic notation}
Let $(\Z^{8n},S)$ be an even unimodular lattice with a positive definite symmetric matrix $S$ and put $Q_1:=
\begin{pmatrix}
 & & 1\\
 & -S & \\
1 & &
\end{pmatrix}$. We then let $Q_2:=
\begin{pmatrix}
 & & 1\\
 & Q_1 & \\
1 & &
\end{pmatrix}$ and let $\cG=O(Q_2)$~(respectively $\cH=O(Q_1)$) be the orthogonal group {over $\Q$} defined by $Q_2$~(respectively~$Q_1$). We introduce several algebraic subgroups of $\cG$. We first introduce the maximal parabolic subgroup $\cP$ with a Levi decomposition $\cP=\cN_1\rtimes\cL_1$, where $\cN_1$ and $\cL_1$ are defined by the groups of $\Q$-rational points as follows:
\begin{align*}
\cN_{1}(\Q)&=\left\{\left.
{n_{Q_1}(x)}=\begin{pmatrix}
1 & {}^txQ_1 & \frac{1}{2}{}^txQ_1x\\
  & 1_{8n+2} & x\\
 &     & 1
\end{pmatrix}~\right|~x\in\Q^{8n+2}\right\},\\
\cL_{1}(\Q)&=\left\{\left.
\begin{pmatrix}
a & & \\
 & h & \\
 &  & a^{-1}
\end{pmatrix}~\right|~a\in\Q^{\times},~h\in O(Q_1)(\Q)\right\}.
\end{align*}
For $w\in\Q^{8n}$ let $n_0(w):=
\begin{pmatrix}
1 & {}^twS & \frac{1}{2}{}^twSw\\
 & 1_{8n} & w\\
 & & 1
\end{pmatrix}$ and $n_{1}(w):=
\begin{pmatrix}
1 &  & \\
 & n_0(w) & \\
 &   & 1
\end{pmatrix}$. We then introduce the maximal unipotent subgroup $\cN$ of $\cG$ defined by its group of $\Q$-rational points
\[
\cN(\Q):=\{n(x,w)\mid x\in\Q^{8n+2},~w\in\Q^{8n}\},
\]
where $n(x,w):=n_{Q_1}(x)n_{1}(w)$.

Let $G_{\infty}$ be the real Lie group $\cG(\R)$. To describe an Iwasawa decomposition of $G_{\infty}$ we introduce 
\[
A_{\infty}:=\left\{\left.
\begin{pmatrix}
a_1 & & & & \\
  & a_2 & & & \\
  & & 1_{8n} & & \\
 & & & a_2^{-1} & \\
 & & & & a_1^{-1}
\end{pmatrix}~\right|~a_1,~a_2\in\R^{\times}_+\right\}
\]
and a maximal compact subgroup 
\[
K_{\infty}:=\{k\in G_{\infty}\mid {}^tkRk=R\} 
\]
of $G_{\infty}$, where $R=
\begin{pmatrix}
1_2 & & \\
 & S & \\
&  & 1_2
\end{pmatrix}$ is the majorant of $Q_2$. We then have an Iwasawa decomposition of $G_{\infty}$ as follows:
\[
G_{\infty}=\cN(\R)A_{\infty}K_{\infty}.
\]

We next introduce the symmetric domain of type $IV$, which is identified with the quotient $G_{\infty}/K_{\infty}$. 
We follow \cite[Section 1.4]{Na} to describe it. 
  Let $B_{Q_1}$ be the bilinear form on $V\times V$ defined by $Q_1$ with $V=\R^{8n+2}$ and let $(V,\tau)$ be the Euclidean Jordan algebra equipped with the trace form
\[
\tau:V\times V\ni(x,y)\mapsto\tau(x,y)=2B_{Q_1}(x\circ y,e),
\]
where 
\[
x\circ y:=B_{Q_1}(x,e)y+B_{Q_1}(y,e)x-B_{Q_1}(x,y)e,\quad(x,y\in V)
\]
with ${}^te=(\frac{1}{\sqrt{2}},0,\cdots,0,\frac{1}{\sqrt{2}})$. This Euclidean Jordan algebra has the determinant $\Delta$ given by
\[
\Delta(v):=\frac{1}{2}B_{Q_1}(v,v)\quad(v\in V).
\]
Let us introduce the symmetric cone $\Omega:=\{v\in V\mid B_{Q_1}(v,v)>0,~B_{Q_1}(v,e)>0\}$ of $V$. Then the symmetric domain of type $IV$ corresponding to $G_{\infty}$ is realized as $\cD:=V+\sqrt{-1}\Omega$. 
The Lie group $G_{\infty}$ acts on $\cD$ by the linear fractional transformation, for which we use the notation $g\cdot z$ for $(g,z)\in G_{\infty}\times\cD$. Let $J(g,z)\in\C$ be the automorphy factor for $(g,z)\in G_{\infty}\times\cD$. For the definition of $g\cdot z$ and $J(g,z)$ see \cite[Section 1]{Gr}. 
We can identify $G_{\infty}/K_{\infty}$ with $\cD$ by
$G_{\infty}\ni g\mapsto g\cdot(\sqrt{-1}e)\in\cD$. 
\subsection{Review on Oda-Rallis-Schiffman lifting}
By $S_{\kappa}(SL_2(\Z))$ we denote the space of holomorphic cusp forms on ${\mathfrak h}$ of weight $\kappa$ with respect to $SL_2(\Z)$. 
To review the Oda-Rallis-Schiffman lift from these holomorphic cusp forms we introduce the archimedean Whittaker function $W_{\lambda,\kappa}$ on $G_{\infty}$ with $\lambda\in\Omega$ and a positive integer $\kappa$ by
\begin{align*}
& W_{\lambda,\kappa}(n(x,w)ak)\\
&\qquad :=J(k_{\infty},\sqrt{-1}e)^{-\kappa}\Delta({\rm Im}({n_{1}(w)}a\cdot\sqrt{-1}e))\exp(2\pi\sqrt{-1}\tau(\lambda,x+\sqrt{-1}{\rm Im}({n_{1}(w)}a\cdot\sqrt{-1}e))
\end{align*}
for $(x,w,a,k)\in\R^{8n+2}\times\R^{8n}\times A_{\infty}\times K_{\infty}$, where ${\rm Im}(z)$ denotes the imaginary part of $z\in\cD$. 

Let $f\in S_{\kappa-4n+2}(SL_2(\Z))$ be given by the $q$-expansion $f(\tau)=\sum_{m\geq 1}c(m)q^m$~(thus $\kappa$ has to be even and $\kappa-4n+2\ge 12$). We put $|\lambda|_{Q_1}:=\sqrt{\frac{1}{2}{}^t\lambda Q_1\lambda}=\sqrt{\Delta(\lambda)}$ for $\lambda\in V$.  We introduce a smooth function $F_f$ on $G_{\infty}$ by
\[
F_f(g_{\infty})=\sum_{\lambda\in\Z^{8n+2}\cap\Omega}C_{\lambda}W_{\lambda,\kappa}(g_{\infty}),
\]
where
\[
C_{\lambda}=\sum_{d|d_{\lambda}}d^{\kappa-1}c(\frac{|\lambda|_{Q_1}^2}{d^2})
\]
with the greatest common divisor $d_{\lambda}$ of the entries of $\lambda$. 
For the maximal lattice $\Z^{\oplus 2}\oplus\Z^{8n}\oplus\Z^{\oplus 2}$ with respect to $Q_2$ we introduce an arithmetic subgroup
\[
\Gamma_S:=\{\gamma\in\cG(\Q)\mid \gamma(\Z^{\oplus 2}\oplus\Z^{8n}\oplus\Z^{\oplus 2})=\Z^{\oplus 2}\oplus\Z^{8n}\oplus\Z^{\oplus 2}\}.
\]
We now state the following theorem by Oda \cite[Corollary to Theorem 5]{Od} and Rallis-Schiffman \cite[Theorem 5.1]{Ra-Sch}.
\begin{thm}[Oda, Rallis-Schiffman]
For $\kappa>8n+4$ the smooth function $F_f$ is a holomorphic cusp form of weight $\kappa$ with respect to $\Gamma_S$, lifted from the domain $\cD$ to the group $G_{\infty}$.
\end{thm}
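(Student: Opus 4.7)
The plan is to realize $F_f$ as a theta lift of $f$ to $\cG = O(Q_2)$, paralleling the approach of Section~\ref{Theta-lift} but now in the holomorphic setting. Since $Q_2$ has signature $(2, 8n+2)$, the real group $G_\infty$ acts on the Hermitian symmetric domain $\cD$ of type~IV, so holomorphic modular forms of weight $\kappa$ on $\cD$ make sense. I would work with the dual reductive pair $(SL_2, O(Q_2))$ in the Weil representation, and choose the archimedean component of a Schwartz function $\varphi_\kappa \in \cS(V(\R))$ (where $V := \Q^{8n+4}$ is the ambient quadratic space) as a Gaussian multiplied by an explicit homogeneous polynomial of the type used by Rallis-Schiffman, so that the associated theta kernel $\Theta_\kappa(\tau, g)$ transforms as a holomorphic modular form of weight $\kappa - 4n + 2$ in $\tau \in {\mathfrak h}$ (matching the weight of $f$) and as a holomorphic modular form of weight $\kappa$ in the $G_\infty$ variable, i.e.\ $\Theta_\kappa(\tau, gk) = J(k, \sqrt{-1}e)^{-\kappa} \Theta_\kappa(\tau, g)$. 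Combined with the standard Schwartz function at the finite places adapted to $\Z^{8n+4}$, this gives $\Theta_\kappa$ as a left $\Gamma_S$-invariant function of $g$.

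The lift is then
\[
\Phi_\kappa(g, f) := \int_{SL_2(\Z) \backslash {\mathfrak h}} f(\tau) \overline{\Theta_\kappa(\tau, g)} v^{\kappa - 4n + 2} \frac{du\,dv}{v^2},
\]
which converges absolutely for $\kappa > 8n + 4$ because $f$ is cuspidal and the archimedean polynomial factor in $\varphi_\kappa$ keeps the integrand rapidly decaying at the cusp of ${\mathfrak h}$; it is left $\Gamma_S$-invariant, holomorphic of weight $\kappa$ on $\cD$ by construction, and its Fourier expansion is supported on $\lambda \in \Omega$, which by the analysis in Section~\ref{Adelic-lift} combined with Remark~\ref{Representability} forces the vanishing of constant terms at every $\Gamma_S$-cusp. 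The heart of the argument is matching the Fourier expansion of $\Phi_\kappa(g, f)$ with that of $F_f$: unfolding the $\tau$-integral against the $q$-expansion $\sum_{m \ge 1} c(m) q^m$ of $f$ and applying Poisson summation along $\cN_1$, as in the calculation leading to \cite[Theorem 7.1]{B}, produces for each $\lambda \in \Z^{8n+2} \cap \Omega$ a coefficient of the form $\sum_{m \ge 1} m^{\kappa - 1} c(m^2 |\lambda|^2_{Q_1})$ times an archimedean Mellin integral. Re-indexing via $\lambda = d \lambda'$ with $\lambda'$ primitive and $d \mid d_\lambda$ collapses the sum to $C_\lambda = \sum_{d \mid d_\lambda} d^{\kappa-1} c(|\lambda|^2_{Q_1}/d^2)$, while the Mellin integral simultaneously evaluates to $W_{\lambda, \kappa}(g)$. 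This yields $\Phi_\kappa(g, f) = F_f(g)$.

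The main technical obstacle is the explicit determination of the polynomial in $\varphi_\kappa$ producing the correct bi-weight property of $\Theta_\kappa$, together with the precise evaluation of the archimedean Mellin integral reproducing $W_{\lambda, \kappa}$ with the stated normalization; these involve representation-theoretic and Gaussian-integral calculations of the type carried out in the original papers \cite{Od} and \cite{Ra-Sch}. Once this matching is established, the convergence hypothesis $\kappa > 8n+4$ guarantees absolute convergence of both the theta integral and the resulting Fourier series, and the left $\Gamma_S$-invariance, holomorphy of weight $\kappa$, and cuspidality of $F_f$ all follow transparently from the theta-lift framework, recovering the theorem of Oda and Rallis-Schiffman.
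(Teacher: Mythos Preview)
The paper does not prove this theorem at all: it is stated as a result of Oda \cite[Corollary to Theorem 5]{Od} and Rallis--Schiffman \cite[Theorem 5.1]{Ra-Sch} and simply cited, with no argument given in the text. Your proposal therefore goes well beyond what the paper does, by actually sketching a proof along the lines of the theta-lift computation carried out in Section~\ref{Theta-lift} for the Maass-form case.

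As a proof plan your outline is reasonable and is essentially the Rallis--Schiffman approach (Oda's original argument proceeded differently, via a converse-theorem style argument with Dirichlet series). A couple of points deserve care if you carry it out. First, the unfolded form of the Fourier coefficient is not quite as you wrote it: as in the computation at the end of Section~\ref{Theta-lift}, the theta unfolding naturally produces a sum over $\lambda$ in the lattice and over $m\ge 1$ of the shape $\sum_{\lambda}\sum_{m\ge 1} c(|\lambda|_{Q_1}^2)\,m^{\kappa-1}\,(\ldots)$ evaluated at $m\lambda$, and it is the substitution $\lambda\mapsto m\lambda$ (grouping by the primitive part of $m\lambda$) that yields $C_\lambda=\sum_{d\mid d_\lambda}d^{\kappa-1}c(|\lambda|_{Q_1}^2/d^2)$, rather than a sum of $c(m^2|\lambda|_{Q_1}^2)$ over $m$. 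Second, your appeal to ``the analysis in Section~\ref{Adelic-lift} combined with Remark~\ref{Representability}'' for cuspidality is a bit loose in the signature $(2,8n+2)$ setting: here $\cH=O(Q_1)$ has Witt rank one, so the structure of cusps is governed by a different class-number count than in the definite case, and one should check directly that the Fourier expansion is supported on $\lambda\in\Omega$ (which indeed forces vanishing of the constant term) at every cusp. These are genuine technical steps but not conceptual obstacles; the overall strategy is sound.
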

\subsection{Cuspidal representations generated by Oda-Rallis-Schiffman lifts.}
\subsection*{(1)~Adelization of $F_f$.}
To consider the cuspidal representation generated by $F_f$ we adelize $F_f$. We carry out it following the argument similar to Section \ref{Adelic-lift}. 

Let $K_f:=\prod_{p<\infty}K_p$ with $K_p:=\{g\in\cG(\Q_p)\mid g\Z_p^{8n+4}=\Z_p^{8n+4}\}$. We remark that the strong approximation theorem of $\cG(\A)$ with respect to the maximal compact subgroup $K_f$ holds, from which we deduce that the set of $\Gamma_S$-cusps is in bijection with $\cH(\Q)\backslash\cH(\A)/\cH(\R)U_f$ with $U_f:=\prod_{p<\infty}U_p~(U_p:=\{h\in\cH(\Q_p)\mid h\Z_p^{8n+2}=\Z_p^{8n+2}\})$. 
This is nothing but Lemma \ref{Classnum-Cusps} for the case of $\cG=O(Q_2)$. 

For $h=(h_p)_{p\le\infty}\in\cH(\A_f)$ we put $L_h:=(\prod_{p<\infty}h_p\Z_p^{8n+2}\times\R^{8n+2})\cap\Q^{8n+2}$ and write $h=au^{-1}$ with $(a,u)\in GL_{8n+2}(\Q)\times(\prod_{p<\infty}SL_{8n+2}(\Z_p)\times SL_{8n+2}(\R))$. 
For $\lambda\in L_h\setminus\{0\}$ we denote by $d_{\lambda}$ the greatest common divisor of the entries of $a^{-1}\lambda\in\Z^{8n+2}$, which is checked to be well-defined by the same argument as the proof of Lemma \ref{Well-def-smfct}. 
 
We introduce a function $A_{\lambda}$ indexed by $\lambda\in\Q^{8n+2}\setminus\{0\}$ as follows:
\begin{align*}
A_{\lambda}\left(
\begin{pmatrix}
1 & & \\
 & h & \\
 &  & 1
\end{pmatrix}\right)&:=
\begin{cases}
\sum_{d|d_{\lambda}}d^{\kappa-1}c(\frac{|\lambda|_{Q_1}^2}{d^2})&(\lambda\in L_h)\\
0&(\lambda\in\Q^{8n+2}\setminus L_h)
\end{cases},\\
A_{\lambda}\left(
\begin{pmatrix}
\beta & & \\
 & h & \\
 &  & \beta^{-1}
\end{pmatrix}\right)&:=||\beta||_{\A}^{\kappa}A_{||\beta||_{\A}^{-1}\lambda}\left(
\begin{pmatrix}
1 & & \\
 & h & \\
 &  & 1
\end{pmatrix}\right)\quad\forall(\beta,h)\in\A_f^{\times}\times\cH(\A_f),\\
A_{\lambda}(n_{2}(x)gk)&:=\Lambda({}^t\lambda Q_1x)A_{\lambda}(g)\quad\forall(x,g,k)\in\A_f^{8n+2}\times\cG(\A_f)\times K_f,
\end{align*}
where $\Lambda$ denotes the standard additive character of $\A/\Q$. This $A_{\lambda}$ is verified to be well-defined function on $\cG(\A_f)$ similarly as in the proof of Lemma \ref{Well-def-smfct}.
With this $A_{\lambda}$ we adelize $F_f$ by
\[
F_f(g)=\sum_{\lambda\in\Q^{8n+2}\setminus\{0\}}A_{\lambda}(g_f)W_{\lambda,\kappa}(g_{\infty})
\]
for $g=g_fg_{\infty}\in\cG(\A)$ with $(g_f,g_{\infty})\in\cG(A_f)\times G_{\infty}$. 
By the definition of the adelized $F_f$, $F_f$ is right $K_f$-invariant. 
By the standard argument in terms of the strong approximation theorem the left $\Gamma_S$-invariance of the non-adelic $F_f$ then implies the left $\cG(\Q)$-invariance of the adelic $F_f$. The adelized $F_f$ is a cusp form on $\cG(\A)$.
\subsection*{(2)~Cuspidal representation generated by $F_f$}
To determine explicitly the cuspidal representation of $\cG(\A)$ generated by $F_f$ we first provide an explicit formula for Hecke eigenvalues of  the adelized $F_f$. We can apply the non-archimedean local theory in Section \ref{Sugano-local} to our situation that $m=4n+2,~F=\Q_p,~q=p$ and $\partial=n_0=0$. The $p$-adic group $\cG(\Q_p)$ is viewed as $G_{4n+2}$ in the notation of Section \ref{Sugano-local}. We need the lemma as follows:
\begin{lem}
As a function on $\cG(\Q_p)(\simeq G_{4n+2})$, $A_{\lambda}(g)\in{\cW}^{\cM}_{\lambda}$, where we regard $g\in\cG(\Q_p)$ as an element in $\cG(\A)$ in the usual manner.
\end{lem}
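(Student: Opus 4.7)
The plan is to adapt the proof of part (1) of Lemma \ref{local-sp-Bessel} from the main body to the present rank-two setting. Two properties must be verified for $A_\lambda$: membership in $\cW_\lambda^\cF$, and the local Maass relation \eqref{local-Maass-rel-2}.

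For membership in $\cW_\lambda^\cF$, I would check three conditions. The right $K_p$-invariance is built into the third defining relation for $A_\lambda$. The left equivariance under the unipotent radical $n_{4n+2}(x)$ of $G_{4n+2}$ follows from the same relation by restricting the adelic character $\Lambda$ to its $p$-component $\Lambda_p$; any sign convention in the defining character of $\cW_\lambda^\cF$ is absorbed by replacing $\lambda$ with $-\lambda$ if necessary. The nontrivial point is left invariance under the stabilizer $H_\lambda$. Given $h_0 \in H_\lambda$ and $h \in \cH(\Q_p)$, I would regard $h_0 h$ as an adelic element supported at $p$ and decompose $h_0 h = a_0 u_0^{-1}$ with $a_0 \in \GL_{8n+2}(\Q)$ and $u_0 \in \prod_{\ell<\infty} \SL_{8n+2}(\Z_\ell) \times \SL_{8n+2}(\R)$. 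Using $h_0^{-1}\lambda = \lambda$ one obtains $a_0^{-1}\lambda = u_0^{-1} h^{-1}\lambda$, and since the $p$-component of $u_0$ lies in $\SL_{8n+2}(\Z_p)$, the entry-wise $p$-adic valuation profile of $a_0^{-1}\lambda$ coincides with that of $a^{-1}\lambda$ for any decomposition $h = a u^{-1}$. Hence the greatest common divisor $d_\lambda$ for $L_{h_0 h}$ agrees with that for $L_h$, yielding $A_\lambda(\diag(1, h_0 h, 1)) = A_\lambda(\diag(1, h, 1))$.

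For the local Maass relation, I would compute $W_{k,l} := A_\lambda(\diag(p^{k+l}, M_k, p^{-(k+l)}))$ explicitly using the second defining relation with $\beta = p^{k+l}$. This reduces $W_{k,l}$ to $p^{-(k+l)\kappa}$ times a value of $A_{p^{k+l}\lambda}$ at $\diag(1, M_k, 1)$, and the latter is an explicit divisor sum $\sum_{d \mid d_{k,l}} d^{\kappa-1} c(|\lambda'|_{Q_1}^2 / d^2)$ for a suitable $\lambda'$ and $d_{k,l}$ determined by the action of $M_k$ on $\lambda$ and by the scaling. The identity $W_{k,l} = \sum_{i=0}^l p^{-i} W_{k+l-i, 0}$ then becomes an elementary identity about these divisor sums, verified by direct telescoping on the $p$-part of $d_{k,l}$, in direct analogy with the computation carried out for the Maass-cusp-form case in the main text.

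The hardest step will be the bookkeeping in the Maass-relation verification: tracking how $d_{k,l}$ and the $Q_1$-norm of $\lambda'$ transform under scaling by $p^{k+l}$ and conjugation by $M_k$, and matching the resulting divisor-sum identity with \eqref{local-Maass-rel-2}. This is essentially the non-archimedean content of Sugano's original calculation in \cite[Section 7]{Su}, now translated into the adelic language of $A_\lambda$, and it is the key calculation that allows the local theory to carry over from the Maass-cusp setting to the Oda--Rallis--Schiffman setting.
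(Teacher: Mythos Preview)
Your proposal is correct and follows essentially the same approach as the paper. The paper does not give an explicit proof of this appendix lemma, but the argument is understood to be the direct analogue of the proof of Lemma~\ref{local-sp-Bessel}(1): the $H_\lambda$-invariance via the $a_0u_0^{-1}$ decomposition and comparison of $d_\lambda$, together with a direct check of the local Maass relation~\eqref{local-Maass-rel-2} from the divisor-sum shape of $A_\lambda$---exactly what you outline.
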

We then state the theorem on the Hecke eigenvalues of $F_f$.
\begin{thm}\label{Hecke-eigenval-Oda-RS}
Suppose that $f$ is a Hecke eigenform and let $\lambda_p$ be the Hecke eigenvalue of $f$ at $p<\infty$.\\
(1)~$F_f$ is a Hecke eigenform.\\
(2)~Let $\mu_i$ be the Hecke eigenvalue of the Hecke operator for $C_{4n+2}^{(i)}$ with $1\le i\le 4n+2$. We have
\[
\mu_i=
\begin{cases}
p^{4n+1}(p^{-(\kappa-4n-1)}\lambda_p^2+p^{4n}+\cdots+p+p^{-1}+\cdots+p^{-4n})&(i=1)\\
|R_{4n+1}^{(i-1)}|(\mu_1-\displaystyle\frac{p^{i-1}-1}{p^i-1}f_{4n+2,1})&(2\le i\le 4n+2)
\end{cases}.
\]
\end{thm}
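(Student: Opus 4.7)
The argument should mirror the proof of Theorem \ref{MainThm-first} in Section \ref{Hecke-theory}, but with $m$ shifted from $4n+1$ to $4n+2$. The lemma stated just before the theorem gives $A_\lambda\in\cW^\cM_\lambda$ in the situation $F=\Q_p$, $q=p$, $\partial=n_0=0$, $m=4n+2$, which is verified by exactly the same direct check as in part 1 of Lemma \ref{local-sp-Bessel}. Together with the analogue of part 2 of that lemma (an Iwasawa-decomposition computation that transfers the $C_{4n+2}^{(i)}$-action at a torus translate to the $C_{4n+2}^{(i)}$-action at the identity on a translated Fourier-coefficient $A_{p^{l+m}M_m^{-1}\lambda}$), it suffices to compute the eigenvalue $\mu_1$ of $C_{4n+2}^{(1)}$ on $A_\lambda$ at the identity: indeed Proposition \ref{Simple-Heckemodule} then automatically gives the formula $\mu_i=|R_{4n+1}^{(i-1)}|(\mu_1-\tfrac{p^{i-1}-1}{p^i-1}f_{4n+2,1})$ for $2\le i\le 4n+2$. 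This simultaneously proves part (1) (since the argument is uniform in $p$ and the generating Hecke operators $\{C_{4n+2}^{(i)}\}_i$ exhaust $\cH_{4n+2}$), and reduces part (2) to the single computation of $\mu_1$.

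\textbf{Computing $\mu_1$.} For this I would apply Proposition \ref{Hecke-recurrence} with $r=1$ and $m=4n+2$ in place of $4n+1$ (so the exponents $8n,8n-1,8n-2$ become $8n+2,8n+1,8n$ and $f_{4n-1,\cdot}$ becomes $f_{4n,\cdot}$), evaluated at $k=l=0$. The right-hand side expresses $(C^{(1)}_{4n+2}*A_\lambda)(1)$ as a linear combination of $A_\lambda(1)$, $A_{p\lambda}(1)$ and $A_{pM_1^{-1}\lambda}(1)$, together with the coefficient $\beta_\lambda$ that distinguishes the two possible $p$-adic orders of $\tfrac12{}^t\lambda Q_1\lambda$. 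Each of these values is expanded using the definition $A_\lambda(1)=\sum_{d\mid d_\lambda}d^{\kappa-1}c(|\lambda|_{Q_1}^2/d^2)$, after which one uses the Hecke relations for the weight $k=\kappa-4n+2$ eigenform $f$, namely $c(pn)=\lambda_p c(n)-p^{k-1}c(n/p)$ and the derived quadratic recursion for $c(p^2 n)$. Plugging these in and collecting terms indexed by divisors of $d_\lambda$ should produce a single multiple of $A_\lambda(1)$, with the multiplier equal to $p^{4n+1}(p^{-(\kappa-4n-1)}\lambda_p^2+p^{4n}+\cdots+p+p^{-1}+\cdots+p^{-4n})$.

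\textbf{Where the weight shift enters.} The factor $p^{-(\kappa-4n-1)}$ in front of $\lambda_p^2$ is not present in the Maass analogue of Theorem \ref{MainThm-first}~(2), and its appearance is the subtle point. It arises from the interaction between the weight-twisted normalization $d^{\kappa-1}$ built into $C_\lambda$ (here $f$ has weight $\kappa-4n+2$, not $0$) and the twisted Hecke recursion $c(pn)=\lambda_p c(n)-p^{\kappa-4n+1}c(n/p)$. Concretely, when one rewrites $c(|p\lambda|_{Q_1}^2)=c(p^2|\lambda|_{Q_1}^2)$ via this recursion, the extra factor of $p^{\kappa-4n+1}$ combines with the $d^{\kappa-1}$-summation to yield the shift $p^{-(\kappa-4n-1)}$ after a compensating $p^{4n+1}$ is pulled out to match the Sugano-style prefactor.

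\textbf{Main obstacle.} Everything structural—the reduction via Proposition \ref{Simple-Heckemodule}, the $m=4n+2$ version of Proposition \ref{Hecke-recurrence}, and the verification $A_\lambda\in\cW^\cM_\lambda$—is completely parallel to the Maass case. The only genuinely delicate step is the bookkeeping in the final combination: one must check that all contributions involving $A_{p\lambda}$, $A_{pM_1^{-1}\lambda}$ and $\beta_\lambda$ collapse so as to leave a clean eigenvalue relation, while simultaneously tracking how the weight $\kappa-4n+2$ propagates through the Hecke recursion to produce exactly the normalization $p^{-(\kappa-4n-1)}\lambda_p^2$. Once this algebraic identity is verified for reduced $\lambda$, Lemma \ref{prim-gcd-red} extends it to arbitrary nonzero $\lambda\in\Q^{8n+2}$, completing the proof.
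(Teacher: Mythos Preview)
Your proposal is correct and follows essentially the same approach as the paper. The paper's own proof is deliberately brief, pointing out that the argument is parallel to Theorem \ref{MainThm-first} with the single modification being the Hecke recursion for the holomorphic eigenform (stated there as Lemma \ref{Hecke-rel-elliptic}, namely $c(p^2m)=(\lambda_p^2-2p^{\kappa-(4n-1)})c(m)-p^{2(\kappa-(4n-1))}c(m/p^2)$ together with its companion relation), then invoking Proposition \ref{Hecke-recurrence} on $G_{4n+2}$ for $\mu_1$ and Proposition \ref{Simple-Heckemodule} for the remaining $\mu_i$---exactly the structure you outline.
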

\begin{proof}
We give just an overview of the proof since it is quite similar to the case of the lifting from the Maass cusp forms. The only difference is the recurrence relation for the Fourier coefficients of the holomorphic cusp form $f$ as follows:
\begin{lem}\label{Hecke-rel-elliptic}
Let $f(\tau)=\sum_{n\ge 1}c(m)q^m\in S_{\kappa-(4n-2)}(SL_2(\Z))$ be a Hecke eigenform. We have
\begin{align*}
c(p^2m)&=(\lambda_p^2-p^{\kappa-(4n-1)})c(m)-
{\begin{cases}
p^{\kappa-(4n-1)}\lambda_pc(m/p)&(p|m)\\
0&(p\nmid m)
\end{cases}},\\
c(p^2m)&=(\lambda_p^2-2p^{\kappa-(4n-1)})c(m)-p^{2(\kappa-(4n-1))}c(m/p^2),
\end{align*}
where we assume {$p^2|m$} for the second formula.  
\end{lem}
This follows from the well known recurrence relation of the Fourier coefficients~(cf.~\cite[Chapitre VII,~Section 5.3, Corollaire 2]{Se}). 

With this lemma and Proposition \ref{Hecke-recurrence} for $\cW_{\lambda}^{\cF}$ on $G_{4n+2}$, we get the explicit formula for $\mu_1$ by the proof similar to that of Proposition \ref{Hecke-eigen-1}. 
The formula for $\mu_i$ with $i\ge 2$ is then an immediate consequence from Proposition \ref{Simple-Heckemodule}.
\end{proof}
\subsection*{Cuspidal representation generated by $F_f$}
We now state the theorem quite similar to Theorem \ref{MainThm-second}.
\begin{thm}\label{cusp-rep-Oda-RS}
Let $\pi_{F_f}$ be the cuspidal representation generated by $F_f$ and suppose that $f$ is a Hecke eigenform.\\
(1)~The representation $\pi_{F_f}$ is irreducible and thus has the decomposition into the restricted tensor product $\otimes'_{v\le\infty}\pi_v$ of irreducible admissible representations $\pi_v$.\\
(2)~For $v=p<\infty$, $\pi_p$ is the spherical constituent of the unramified principal series representation of $G_p$ with the Satake parameter
\[
\diag\left(\left({\frac{\lambda'_p+\sqrt{{\lambda'}_p^2-4}}{2}}\right)^2,p^{4n},\cdots,p,1,1,p^{-1},\cdots,p^{-4n},\left({\frac{\lambda'_p+\sqrt{{\lambda'}_p^2-4}}{2}}\right)^{-2}\right),
\]
where $\lambda'_p:=p^{-\frac{\kappa-(4n-1)}{2}}\lambda_p$.\\
(3)~For every finite prime $p<\infty$, $\pi_p$ is non-tempered while $\pi_{\infty}$ is tempered.
\end{thm}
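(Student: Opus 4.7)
The plan is to mirror the argument used to establish Theorem~\ref{MainThm-second}, adapting it to the setting $\cG=O(Q_2)$ with $m=4n+2$, $\partial=n_0=0$, while handling the fact that $F_f$ is holomorphic of weight $\kappa$ rather than spherical at infinity.

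For assertion~(1), I would appeal to \cite[Theorem~3.1]{N-P-S} together with Theorem~\ref{Hecke-eigenval-Oda-RS}(1), which reduces the irreducibility of $\pi_{F_f}$ to that of its archimedean component. Since $F_f$ is holomorphic of weight $\kappa$, it transforms under $K_\infty$ by a single one-dimensional character coming from the $\kappa$-th power of the automorphy factor $J(k_\infty,\sqrt{-1}e)$. The corresponding minimal $K_\infty$-type occurs with multiplicity one in the holomorphic (limit of) discrete series of $G_\infty$ whose Harish-Chandra parameter is determined by the Casimir eigenvalue of $F_f$; hence an argument parallel to the one in Proposition~\ref{Irreducibility-cusprep}, replacing the trivial $K_\infty$-type by this weight-$\kappa$ character, shows $F_f$ lies in a single irreducible admissible $G_\infty$-summand.

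For assertion~(2), I would combine Theorem~\ref{Hecke-eigenval-Oda-RS}(2) with Lemma~\ref{unramified-rep}(2) to identify $\pi_p$ from the Hecke eigenvalues $\mu_i$ of its spherical vector. The right $K_p$-invariance of $F_f$ forces $\pi_p$ to be a spherical constituent of some $I(\chi)$, and I would postulate $\chi$ to be exactly the Satake parameter displayed in the statement. Then one must verify that the action $\phi_{4n+2}(C^{(i)}_{4n+2})$ on the spherical vector of this $I(\chi)$ agrees with $\mu_i$ for every $1\le i\le 4n+2$. Following the lemma in the proof of Theorem~\ref{MainThm-second}, the coset decomposition \cite[Lemma~7.1]{Su} yields a recursion
\[
\phi_{4n+2}(C^{(i)}_{4n+2})=p^{4n+1}(\chi_1(p)+\chi_1(p)^{-1})\phi_{4n+1}(C^{(i-1)}_{4n+1})+(p^{i-1}-1)f_{4n+1,i-1}\phi_{4n+1}(C^{(i-2)}_{4n+1})+p^i\phi_{4n+1}(C^{(i)}_{4n+1}).
\]
Since the proposed Satake parameter restricted to the middle $G_{4n+1}$-block is trivial, one has $\phi_{4n+1}(C^{(i)}_{4n+1})=|R^{(i)}_{4n+1}|$, and a direct calculation, using $\chi_1(p)+\chi_1(p)^{-1}=\lambda_p'^{\,2}\,p^{-1}-\text{(constant)}$ and the recursion for $|R^{(i)}_{4n+1}|$ via the $f_{m,j}$, should reproduce the formula for $\mu_i$ stated in Theorem~\ref{Hecke-eigenval-Oda-RS}(2). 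I expect this computation to be the main obstacle: the bookkeeping of the $f_{m,j}$, together with the shift by the weight factor $p^{-(\kappa-4n-1)}$ appearing in $\lambda_p'$, must exactly cancel to yield the clean formula for $\mu_i$.

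For assertion~(3), non-temperedness at every finite $p$ follows from the explicit Satake parameter: the entries $p^{4n},p^{4n-1},\dots,p$ have absolute value bounded away from $1$, so $(\pi_p|_{G_i},\C v)$ is the trivial representation of $G_i$ for each $i\le 4n+1$. Exactly as in the proof of Theorem~\ref{MainThm-second}(3), the integral of the $(2+\epsilon)$-th power of the spherical matrix coefficient over $\{K_pgK_p\mid g\in G_i\}$ diverges, contradicting temperedness. Temperedness of $\pi_\infty$ is immediate from the identification carried out in (1): the (limit of) holomorphic discrete series of $O(2,8n+2)(\R)$ are tempered. Together with (1) and (2) this completes the proof.
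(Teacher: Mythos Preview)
Your overall strategy matches the paper's: reduce (1) to archimedean irreducibility via \cite[Theorem~3.1]{N-P-S} and Theorem~\ref{Hecke-eigenval-Oda-RS}, deduce (2) from the Hecke eigenvalues exactly as in Theorem~\ref{MainThm-second} using the recursion from \cite[Lemma~7.1]{Su} and the triviality of the restriction to $G_{4n+1}$, and read off (3) from the explicit Satake parameter together with temperedness of discrete series.

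The one place your argument is looser than the paper's is the archimedean step in (1). You assert that $F_f$ transforms under $K_\infty$ by a one-dimensional character and then run a multiplicity-one argument parallel to Proposition~\ref{Irreducibility-cusprep}. But $G_\infty=O(2,8n+2)(\R)$ and $K_\infty$ are disconnected, and the automorphy factor $k\mapsto J(k,\sqrt{-1}e)^{-\kappa}$ is a priori only a character of $K_\infty^0$; so the ``multiplicity one of the minimal $K_\infty$-type'' step needs care. The paper handles this by first identifying the $({\mathfrak g}_\infty,K_\infty^0)$-module generated by $F_f$ under $G_\infty^0$-translations as the (anti-)holomorphic discrete series $\pi_\kappa$ with minimal $K_\infty^0$-type $J(\,\cdot\,,\sqrt{-1}e)^{-\kappa}$, which is irreducible; then, since the full $({\mathfrak g}_\infty,K_\infty)$-module is the finite sum of conjugates $\pi_\kappa^\sigma$ over representatives $\sigma$ of $G_\infty/G_\infty^0$, it invokes \cite[Lemma~3.5]{La} to conclude irreducibility as a representation of the disconnected $G_\infty$. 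Incorporating this two-step argument (or an equivalent justification that your $K_\infty$-type really has multiplicity one in the full $G_\infty$-module) closes the gap. Also note that since $\kappa>8n+4$ you are in the genuine discrete series range, so the hedge ``(limit of)'' is unnecessary.
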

\begin{proof}
Also for this theorem we sketch the proof since it is very similar to that of Theorem \ref{MainThm-second}. 
Let $G_{\infty}^0$ and $K_{\infty}^0$ be the connected component of the identity for $G_{\infty}$ and $K_{\infty}$ respectively, and let ${\mathfrak g}_{\infty}$ be the Lie algebra of $G_{\infty}$. 
The right translations of $F_f$ by $G_{\infty}^0$ generate the anti-holomorphic discrete series representation $\pi_{\kappa}$ with minimal $K_{\infty}^0$-type given by
\[
K_{\infty}^0\ni k\mapsto J(k,\sqrt{-1}e)^{-\kappa}
\]
as a $({\mathfrak g}_{\infty},K_{\infty}^0)$-module, and it is irreducible. Given a complete set $\{\sigma\}$ of representatives for $G_{\infty}/G_{\infty}^0\simeq K_{\infty}/K_{\infty}^0$, the right translations of $F_f$ by $G_{\infty}$ generate an irreducible $({\mathfrak g}_{\infty},K_{\infty})$-module whose restriction to $G_{\infty}^0$ is a finite direct sum of the conjugations $\pi_{\kappa}^{\sigma}$ by $\sigma$s with $\sigma$ running over the representatives. {According to \cite[Lemma 3.5]{La} the archimedean component of $\pi_F$ is irreducible}.
Since $F_f$ is a Hecke eigenform under the assumption we see the irreducibility of $\pi_F$ by \cite[Theorem 3.1]{N-P-S}. This is nothing but the first assertion. 
In view of Theorem \ref{Hecke-eigenval-Oda-RS} the rest of the assertions are settled by the proof similar to (2) and (3) in Theorem \ref{MainThm-second}. 
For the third assertion we remark that the discrete series representations of semi-simple real Lie groups are a well-known class of tempered representations. 
\end{proof}

As we deduce Corollary \ref{Std-L-fct} from Theorem \ref{MainThm-second} we have the following as an immediate consequence from Theorem \ref{cusp-rep-Oda-RS}.
\begin{cor}\label{Std-L-fct-Oda-RS}
For any prime $p$ the local $p$-factor $L_p(\pi_{F_f},\operatorname{St},s)$ of the standard $L$-function for $\pi_{F_f}$~(or $F_f$) is written as
\begin{align*}
L_p(\pi_{F_f},\operatorname{St},s)=L_p(\sym^2(f),s)\prod_{j=0}^{8n}\zeta_p(s+j-4n),
\end{align*}
where $L_p(\sym^2(f),s)$ is the $p$-factor of the symmetric square $L$-function for $f$.
\end{cor}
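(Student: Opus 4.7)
The plan is to unpack the local standard $L$-factor directly from the Satake parameter given in Theorem \ref{cusp-rep-Oda-RS}(2), and then regroup the resulting Euler factors. For the spherical constituent $\pi_p$ whose Satake parameter is a diagonal $(8n+4)\times(8n+4)$ matrix with entries $\alpha_1,\ldots,\alpha_{8n+4}$, the local standard $L$-factor at $p$ is $\prod_{i=1}^{8n+4}(1-\alpha_ip^{-s})^{-1}$, exactly as in the Maass case derivation of Corollary \ref{Std-L-fct}.

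Substituting the Satake parameter from Theorem \ref{cusp-rep-Oda-RS}(2) and abbreviating $\beta := (\lambda'_p+\sqrt{{\lambda'_p}^2-4})/2$, so that $\beta+\beta^{-1}=\lambda'_p$ and $\beta\cdot\beta^{-1}=1$, this reads
$$L_p(\pi_p,\operatorname{St},s) = (1-\beta^2p^{-s})^{-1}(1-\beta^{-2}p^{-s})^{-1}(1-p^{-s})^{-2}\prod_{k=1}^{4n}(1-p^kp^{-s})^{-1}(1-p^{-k}p^{-s})^{-1}.$$
The next step is to regroup: one of the two $(1-p^{-s})^{-1}$ factors combined with the $8n$ factors in the product over $k$ assembles into $\prod_{k=-4n}^{4n}\zeta_p(s-k) = \prod_{j=0}^{8n}\zeta_p(s+j-4n)$ via the substitution $j = 4n-k$. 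The remaining three factors $(1-\beta^2p^{-s})^{-1}(1-p^{-s})^{-1}(1-\beta^{-2}p^{-s})^{-1}$ constitute the local $p$-factor $L_p(\sym^2 f,s)$, provided that $\beta,\beta^{-1}$ are the analytic Satake parameters of $f$.

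To verify this last point, which is the only place requiring nontrivial input, one uses that for the holomorphic Hecke eigenform $f$ of weight $k_0:=\kappa-4n+2$ with Fourier coefficient $\lambda_p$, the Satake parameters $\gamma_p,\gamma_p^{-1}$ of $f$ in the analytic normalization satisfy $\lambda_p = p^{(k_0-1)/2}(\gamma_p+\gamma_p^{-1})$. Since $(k_0-1)/2 = (\kappa-(4n-1))/2$, this rearranges to $\gamma_p+\gamma_p^{-1} = \lambda'_p$, whence $\{\gamma_p,\gamma_p^{-1}\} = \{\beta,\beta^{-1}\}$. Combining the two groupings then yields the formula of the corollary. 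There is no substantive obstacle; the argument parallels the derivation of Corollary \ref{Std-L-fct} in the Maass case, with the only additional step being the rescaling of the Hecke eigenvalue to the analytic normalization (this rescaling is absent in the Maass case because the Laplace eigenvalue of a Maass form already provides the analytic normalization).
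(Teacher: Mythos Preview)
Your proposal is correct and follows essentially the same approach as the paper: the paper states only that the corollary is ``an immediate consequence from Theorem \ref{cusp-rep-Oda-RS}'' in the same way Corollary \ref{Std-L-fct} was deduced from Theorem \ref{MainThm-second}, and you have simply spelled out that immediate deduction---reading the Euler factor off the Satake parameter and regrouping---including the rescaling $\lambda_p\mapsto\lambda'_p$ that identifies $\{\beta,\beta^{-1}\}$ with the unitary Satake parameters of $f$.
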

\begin{rem}
This result is essentially obtained in \cite[Theorem 8.1]{Su}, which expresses the standard $L$-functions of the Oda-Rallis-Schiffman lifts in the Jacobi form formulation in terms of $L$-functions of Jacobi forms. Sugano has remarked that $L_p(\sym^2(f),s)$ is a local factor of the $L$-function of some Jacobi form. 
\end{rem}
\section*{{\bf Acknowledgement}}
The second named author would like to express his profound gratitude to Prof.~Takashi Sugano and Prof.~Masao Tsuzuki for their comments or discussions related to this study, {especially for the non-archimedean local theory}. 
The second named author was partially supported by Grand-in-Aid for Scientific Research (C) 16K05065, Japan Society for the Promotion of Science. 
This work was supported by the Research Institute for Mathematical Sciences, a Joint Usage/Research Center located in Kyoto University. 
The first named author would like to thank the MPIM at Bonn for organizing the third Japanese-German number theory workshop, when some of the works here were discussed and completed.
The first named author was partially supported by the DFG grant BR-2163/4-2, an NSF postdoctoral fellowship, and the LOEWE research unit USAG.

\newpage
\thispagestyle{empty}
\noindent
Yingkun Li\\
Fachbereich Mathematik\\
Technische Universit{\"a}t Darmstadt\\ 
Schlossgartenstr. 7\\
64289 Darmstadt, Germany\\
{\it E-mail address}:~li@mathematik.tu-darmstadt.de
\\[10pt]
Hiro-aki Narita\\
Department of Mathematics\\
Faculty of Science and Engineering\\
Waseda University\\
3-4-1 Ohkubo, Shinjuku, Tokyo 169-8555, Japan\\
{\it E-mail address}:~hnarita@waseda.jp
\\[10pt]
Ameya Pitale\\
Department of Mathematics\\
University of Oklahoma\\
Norman, Oklahoma, USA\\ 
{\it E-mail address}:~apitale@ou.edu
\end{document}